\title{\huge\textbf{A New Class of Non-Central Dirichlet Distributions}}
\author{
Carlo Orsi\footnote{e-mail: \mail{orsi.carlo@gmail.com}}
\\
\normalsize Ph.D. in Statistics at University of Milano-Bicocca (Milan, Italy),\\
\normalsize Casella Postale 13, Ufficio Postale Fermo, 63900 Fermo (Italy)
}
\newcommand{\mail}[1]{\href{mailto:#1}{\texttt{#1}}}
\newcolumntype{C}[1]{>{\centering\let\newline\\\arraybackslash\hspace{0pt}}m{#1}}
\newcolumntype{L}[1]{>{\raggedright\let\newline\\\arraybackslash\hspace{0pt}}m{#1}}
\newcolumntype{R}[1]{>{\raggedleft\let\newline\\\arraybackslash\hspace{0pt}}m{#1}}
\newtheoremstyle{plain}  % follow `plain` defaults but change HEADSPACE.
  {\topsep}   % ABOVESPACE
  {\topsep}   % BELOWSPACE
  {\itshape}  % BODYFONT
  {}       % INDENT (empty value is the same as 0pt)
  {\bfseries} % HEADFONT
  {}         % HEADPUNCT
  {\newline}  % HEADSPACE. `plain` default: {5pt plus 1pt minus 1pt}
  {}          % CUSTOM-HEAD-SPEC
\theoremstyle{plain}
\newtheorem{property}{Property}[section]
\newtheorem{proposition}{\textbf{Proposition}}[section]
\newtheorem{definition}{Definition}[section]
\newtheoremstyle{mystyle}  % follow `plain` defaults but change HEADSPACE.
  {\topsep}   % ABOVESPACE
  {\topsep}   % BELOWSPACE
  {\normalfont}  % BODYFONT
  {0pt}       % INDENT (empty value is the same as 0pt)
  {\bfseries} % HEADFONT
  {.}         % HEADPUNCT
  {\newline}  % HEADSPACE. `plain` default: {5pt plus 1pt minus 1pt}
  {}          % CUSTOM-HEAD-SPEC
\theoremstyle{mystyle}
\begin{document}
\baselineskip24pt
\maketitle

\begin{abstract}
\baselineskip24pt
In the present paper new light is shed on the non-central extensions of the Dirichlet distribution. Due to several probabilistic and inferential properties and to the easiness of parameter interpretation, the Dirichlet distribution proves the most well-known and widespread model on the unitary simplex. However, despite its many good features, such distribution is inadequate for modeling the data portions next to the vertices of the support due to the strictness of the limiting values of its joint density. To replace this gap, a new class of distributions, called Conditional Non-central Dirichlet, is presented herein. This new model stands out for being a more easily tractable version of the existing Non-central Dirichlet distribution which maintains the ability of this latter to capture the tails of the data by allowing its own density to have arbitrary positive and finite limits.
% 141 words

\vspace*{0.2cm}
\noindent \textit{Keywords}: Unitary simplex; finite limits of the density; mixture representation; perturbation representation; simulation; computational efficiency.\\
\noindent \textit{MSC} 2010 subject classifications: 62E15
\end{abstract}

% ***************************************************************************************************************************************************
% ***************************************************************************************************************************************************
% ***************************************************************************************************************************************************
% ************************************************************ 1. INTRODUCTION **********************************************************************
% ***************************************************************************************************************************************************
% ***************************************************************************************************************************************************
% ***************************************************************************************************************************************************
\section{Introduction}
\label{sec:introduc}

Thanks to its simplicity and good mathematical properties, the Dirichlet distribution has historically represented the first tool for modeling data on the unitary simplex, this latter being the subset of $\mathbb{R}^{\, D}$, $D \geq 1$, given by
\begin{equation}
\mathcal{S}^{\, D}=\left\{\underline{x}=\left(x_1,\, \ldots \, ,x_D\right) \, : \; 0 < x_i <1 \, , \; i=1,\, \ldots \, ,D \; , \; \sum_{i=1}^{D} x_i<1\right\}
\label{eq:d.unit.simpl}
\end{equation}
and corresponding to the Real interval $(0,1)$ for $D=1$. Such data typically consist of components which represent parts of a whole and naturally arise in many areas of application such as medical, biological and social sciences. In this regard, a comprehensive review of the great variety of disciplines in which the above distribution has been employed is provided for example by \cite{NgTiaTan11}. Despite its several statistical properties fundamental for the analysis of this type of data, one of the weakest points of the Dirichlet family lies in the poorness of its parametrization. Amongst others, this fact implies low flexibility of the density at the vertices of Eq.~(\ref{eq:d.unit.simpl}). More precisely, only the $i$-th component of the parameter vector is devoted to modeling the limiting value of the Dirichlet density at the $i$-th vertex of the simplex, this latter being the Real vector whose elements are all equal to zero except for the $i$-th element which is one. This characteristic feature necessarily restricts the applicative potential of such distribution. In fact, the Dirichlet density tipically performs the identification of the tails of the data in an unsatisfactory way by showing limits equal to 0 or $+\infty$ in the cases that positive finite limits would be more appropriate instead. 

Several generalizations of the Dirichlet distribution have been proposed in the literature. However, only a few of them enable to overcome the aforementioned limitation of the Dirichlet by allowing their densities to take on arbitrary positive and finite limits which are potentially apt to properly model the data portions lying in the neighborhood of the vertices of the support. Specifically, the bivariate extension of the Kummer-Beta distribution \cite{BraOroNag11} and the Non-central Dirichlet distribution \cite{SanNagGup06} are right worthy of consideration in the above sense due to the major flexibility achieved by the limiting values of their densities by means of richer parametrizations. As a matter of fact, due to its further parameter, the density of the former model shows a more flexible behavior than the Dirichlet at the vertices of the unit simplex, whereas the density of the latter, despite its analytical hardness and poor interpretability, enables to fully overcome the above mentioned drawback of the Dirichlet thanks to its additional vector of non-centrality parameters.

That said, the present paper sheds new light on the variety of the probabilistic models on the unitary simplex which accommodate for a more sophisticated structure for the limits of the density by proposing a new non-central generalization of the Dirichlet distribution. This novel model, called Conditional Non-central Dirichlet, can be considered as a more tractable analogue of the existing Non-central Dirichlet model. In fact, the former preserves the ability of the latter to potentially capture the tails of the data by letting its density have arbitrary positive and finite limits and, at the same time, offers the advantage of having a density function which is more straightforward and easily handeable than the existing non-central one.

More precisely, our work is organized as follows. Section~\ref{sec:prelim} provides the mathematical tool kit, which essentially hinges on the notions of ascending factorial and generalized hypergeometric function. A glance at the unit simplex distributions involved in the definition and the analysis of the new model is given as well; particular attention is focused on the limiting values of the densities of such models on varying the parameter vector. An in-depth study of the novel model is carried out in Section~\ref{sec:new.ncdir}. Specifically, this latter is properly specified in Subsection~\ref{subsec:cncdir.def.distr}, where definition, distribution and theoretical properties are discussed in detail. In this regard, the Conditional Non-central Dirichlet density is proved to exhibit the same complex mixture type structure as the standard non-central one; however, from its perturbation representation, the greater tractability and interpretability of this new density surprisingly emerge. Then, an investigation of the mixing distribution of the mixture type form of the new Non-central Dirichlet density is reported in Subsection~\ref{subsec:cncdir.mix.distr}. Evidences of the similarities between the new and the standard non-central models are provided in Subsections~\ref{subsec:cncdir.repres} and~\ref{subsec:cncdir.dens.plots}, where some representations of the new model are presented and significant plots of its density are displayed, respectively. In particular, a special focus is given to the case of unitary shape parameters, under which the density of the new model shows the attractive feature of taking on arbitrary positive and finite limits at the vertices of the unit simplex. Subsection~\ref{subsec:cncdir.mom} derives a closed-form expression for the mixed raw moments, whereas the identifiability of the model is tackled in Subsection~\ref{subsec:cncdir.identif}. In Section~\ref{sec:cncdir.examples} an application to a real data set highlights the potential of the new model with respect to the aforementioned alternative ones. In conclusion, Section~\ref{sec:concl} contains some final remarks and Section~\ref{sec:r.funcs} provides an appendix which bears an implementation in the programming enviromnment \texttt{R} of the multiple infinite sum involved in the perturbation representation of the standard Non-central Dirichlet density.  

% ***************************************************************************************************************************************************
% ***************************************************************************************************************************************************
% ***************************************************************************************************************************************************
% ************************************************************** 2. PRELIMINARIES *******************************************************************
% ***************************************************************************************************************************************************
% ***************************************************************************************************************************************************
% ***************************************************************************************************************************************************

\section{Preliminaries}
\label{sec:prelim}

This section is intended to briefly review the probabilistic models involved in the present study. However, first of all we shall provide the mathematical tool kit that paves the way for a thorough analysis of the new distribution we are going to define. In this regard, it is essential remembering that
\begin{equation}
\left(a\right)_l=\frac{\Gamma\left(a+l\right)}{\Gamma\left(a\right)}=\left\{\begin{array}{ll} 1 & \mbox{ if } l=0 \\  \\ a\left(a+1\right) \, \ldots \, \left(a+l-1\right) & \mbox{ if } l \in \mathbb{N}\end{array} \right. 
\label{eq:poch.symb}
\end{equation}
is the $l$-th ascending factorial or Pochhammer's symbol of $a > 0$ \cite{JohKemKot05}, where $l \in \mathbb{N}$ indicates the number of terms and $\Gamma\left(\cdot\right)$ denotes the gamma function. Some interesting properties of the above notion are listed in the following. Firstly, by additively decomposing $l$ into $l_1 , \, l_2 \in \mathbb{N}$, in light of Eq.~(\ref{eq:poch.symb}) one has that
\begin{equation}
\left(a\right)_{l_1+l_2}=\frac{\Gamma\left(a+l_1+l_2\right)}{\Gamma\left(a\right)}=\left\{\begin{array}{l} \frac{\Gamma\left(a+l_1\right)}{\Gamma\left(a\right)} \, \frac{\Gamma\left(a+l_1+l_2\right)}{\Gamma\left(a+l_1\right)}=\left(a\right)_{l_1} \, \left(a+l_1\right)_{l_2} \\ \\ \frac{\Gamma\left(a+l_2\right)}{\Gamma\left(a\right)} \, \frac{\Gamma\left(a+l_2+l_1\right)}{\Gamma\left(a+l_2\right)}=\left(a\right)_{l_2} \, \left(a+l_2\right)_{l_1}\end{array} \right. \, .
\label{eq:poch.symb.sum}
\end{equation}
Then, the ratio of two ascending factorials of $a$ can be expressed as
\begin{equation}
\frac{\left(a\right)_{l_1}}{\left(a\right)_{l_2}}=\left\{\begin{array}{ll} \left(a+l_2\right)_{l_1-l_2} & \mbox{ if } l_1 \geq l_2 \\ \\ \left[\left(a+l_1\right)_{l_2-l_1}\right]^{-1} & \mbox{ if } l_1<l_2\end{array} \right. \, ,
\label{eq:poch.symb.ratio}
\end{equation}
whereas the following expansion holds true for the Pochhammer's symbol of a binomial:
\begin{equation}
\left(a+b\right)_{l}=\sum_{j=0}^{l} {l \choose j} \left(a\right)_{l-j} \left(b\right)_j \, .
\label{eq:poch.symb.binom}
\end{equation}

By virtue of the foregoing concept, the generalized hypergeometric function with $p$ upper parameters and $q$ lower parameters, $p, \, q \in \mathbb{N} \cup \{0\}$, can be accordingly defined as
\begin{equation}
_p^{\, }F_q^{}\left(a_1,\, \ldots \, ,a_p;b_1,\, \ldots \, ,b_q;x\right)=\sum_{i=0}^{+\infty} \frac{(a_1)_i \, \ldots \, (a_p)_i}{(b_1)_i \, \ldots \, (b_q)_i} \frac{x^{i}}{i \, !} \, , \quad x \in \mathbb{R} \, ;
\label{eq:fpq}
\end{equation}
for more details on the convergence of the hypergeometric series in Eq.~(\ref{eq:fpq}) as well as for further results and properties of $_p^{\, }F_q^{} \, $, the reader is recommended referring to \cite{SriKar85}. In this regard, the special case of Eq.~(\ref{eq:fpq}) corresponding to $p=0$ and $q=1$, namely 
\begin{equation}
_0^{\, }F_1^{}\left(b; x\right)=\sum_{i=0}^{+\infty} \frac{1}{(b)_i} \frac{x^{i}}{i \, !} \, , \quad x \in \mathbb{R} \, ,
\label{eq:f01}
\end{equation}
plays a prominent role in the analysis of the new model at study. Hence, some features of this latter that make it an extremely regular function are emphasised herein. Specifically, the function in Eq.~(\ref{eq:f01}) is increasing in $x \geq 0$ with values in $\left[1,+\infty\right)$ for any given $b>0$ and decreasing in $b$ for any given $x \geq 0$ as well as its first derivative with respect to $x$. Finally, another important special case of Eq.~(\ref{eq:fpq}) is
\begin{equation}
_1^{\, }F_1^{}\left(a; b; x\right)=\sum_{i=0}^{+\infty} \frac{(a)_i}{(b)_i} \frac{x^{i}}{i \, !} \, , \quad x \in \mathbb{R} \, ,
\label{eq:f11}
\end{equation}
which is otherwise called Kummer's confluent hypergeometric function; this latter satisfies the following transformation:
\begin{equation}
_1^{\, }F_1^{}\left(a; b; x\right)=e^x \, _1^{\, }F_1^{}\left(b-a; b; -x\right) \, ,
\label{eq:kummer.first.teo}
\end{equation}
which is known as Kummer's First Theorem.

That said, denoting independence by $\bot$, we remember that the $D$-dimensional Dirichlet model with vector of shape parameters $\underline{\alpha}=\left(\alpha_1,\, \ldots \, ,\alpha_{D+1}\right)$, $\alpha_i>0$, $i=1,\, \ldots \,,D+1$, denoted by $\mbox{Dir}^{\, D}\left(\underline{\alpha}\right)$, is defined as follows:
\begin{eqnarray}
\lefteqn{\left\{\begin{array}{l} Y_i \stackrel{\bot}{\sim } \chi^{\, 2}_{ 2 \alpha_i} \quad i=1,\, \ldots \,,D+1 \\ \\ Y^+=\sum_{i=1}^{D+1} Y_i  \end{array} \right. \quad \Rightarrow } \nonumber \\
& \Rightarrow & \quad \underline{X}=\left(X_1,\, \ldots \, ,X_D\right)=\left(\frac{Y_1}{Y^+},\, \ldots \, ,\frac{Y_D}{Y^+}\right) \sim \mbox{Dir}^{\, D}\left(\alpha_1,\, \ldots \, , \alpha_{D+1}\right)
\label{eq:dir.def}
\end{eqnarray}
\cite{KotBalJoh00} and its joint density function is given by
\begin{equation}
\mbox{Dir}^{\, D}\left(\underline{x};\underline{\alpha}\right)=\frac{\Gamma\left(\alpha^+\right)}{\prod_{i=1}^{D+1}\Gamma\left(\alpha_i\right)}\left[\prod_{i=1}^{D}x_i^{\alpha_i-1}\right] \left(1-\sum_{i=1}^{D}x_i\right)^{\alpha_{D+1}-1} \, , \qquad \underline{x} \in \mathcal{S}^{\, D} \, ,
\label{eq:dir.dens}
\end{equation}
where $\alpha^+=\sum_{i=1}^{D+1}\alpha_i$ and $\mathcal{S}^{\, D}$ is specified in Eq.~(\ref{eq:d.unit.simpl}). For the ends of this paper it is useful bearing in mind that, in the notation of Eq.~(\ref{eq:dir.def}), the Dirichlet distribution can be also obtained as conditional distribution of $\underline{X}$ given $Y^+$, the former being independent of the latter by virtue of a characterizing property of independent Gamma random variables \cite{Luk55}. The relevance of this property is remarkable; therefore, it is made explicit in the following.

% ***************************************************************************************************************************************************
% **************************** PROPERTY 2.1: (LUKACS 1955) CHARACTERIZING PROPERTY OF INDEPENDENT GAMMA RANDOM VARIABLES ****************************
% ***************************************************************************************************************************************************
\begin{property}[\cite{Luk55} Characterizing property of independent Gamma random variables]
\label{prope:char.prop.chisq}
Let $Y_1$, $Y_2$ be two nondegenerate and positive random variables and suppose that they are independently distributed. The random variables $Y^+=Y_1+Y_2$ and $X_i=Y_i \, / \, Y^+$, for every $i=1,2$, are independently distributed if and only if both $Y_1$ and $Y_2$ has Gamma distributions with the same scale parameter.
\end{property}
% ***************************************************************************************************************************************************
% ***************************************************************************************************************************************************
\noindent Clearly, Property~\ref{prope:char.prop.chisq} is valid also in the case that the number of the involved independent Gamma random variables is greater than two. In particular, by setting the common scale parameter of these latter equal to $1/2$, such property holds true also for any finite number of independent Chi-Squared random variables.   

Now let $D$ equal 2. The Dirichlet density, despite the great variety of shapes reachable by it on varying the shape parameters, shows poor flexibility at the vertices of the unit simplex. In this regard, the limiting values of the density of $\left(X_1,X_2\right) \sim \mbox{\normalfont{Dir}}^{\, 2}\left(\alpha_1,\alpha_2,\alpha_3\right)$ are as follows:
\begin{eqnarray}
\lefteqn{\mbox{\normalfont{Dir}}^{\, 2}\left(x_1,x_2;\alpha_1,\alpha_2,\alpha_3\right) \quad \rightarrow} \nonumber\\
& \stackrel{\left(x_1,\, x_2\right) \, \rightarrow \, \left(1,0\right)}{\rightarrow} & \left\{\begin{array}{ll}
\mbox{\normalfont{if} }\, \alpha_2+\alpha_3<2 \, : & + \, \infty\\
\mbox{\normalfont{if} }\, \alpha_2+\alpha_3=2 \, : & \left\{ \begin{array}{ll} \mbox{\normalfont{if} }\, \alpha_2=\alpha_3=1 \, : & \alpha_1\left(\alpha_1+1\right) \\ \mbox{\normalfont{otherwise:} }\, & \not\exists \end{array}\right.\\
\mbox{\normalfont{if} }\, \alpha_2+\alpha_3>2 \, : & 0 
\end{array}\right. \nonumber \\
& \stackrel{\left(x_1,\, x_2\right) \, \rightarrow \, \left(0,1\right)}{\rightarrow} & \left\{\begin{array}{ll}
\mbox{\normalfont{if} }\, \alpha_1+\alpha_3<2 \, : & + \, \infty\\
\mbox{\normalfont{if} }\, \alpha_1+\alpha_3=2 \, : & \left\{ \begin{array}{ll} \mbox{\normalfont{if} }\, \alpha_1=\alpha_3=1 \, : & \alpha_2\left(\alpha_2+1\right) \\ \mbox{\normalfont{otherwise:} }\, & \not\exists \end{array}\right.\\
\mbox{\normalfont{if} }\, \alpha_1+\alpha_3>2 \, : & 0 
\end{array}\right.  \nonumber \\
& \stackrel{\left(x_1,\, x_2\right) \, \rightarrow \, \left(0,0\right)}{\rightarrow} & \left\{\begin{array}{ll}
\mbox{\normalfont{if} }\, \alpha_1+\alpha_2<2 \, : & + \, \infty\\
\mbox{\normalfont{if} }\, \alpha_1+\alpha_2=2 \, : & \left\{ \begin{array}{ll} \mbox{\normalfont{if} }\, \alpha_1=\alpha_2=1 \, : & \alpha_3\left(\alpha_3+1\right) \\ \mbox{\normalfont{otherwise:} }\, & \not\exists \end{array}\right.\\
\mbox{\normalfont{if} }\, \alpha_1+\alpha_2>2 \, : & 0 
\end{array}\right.  \, ,
\label{eq:dir.dens.lims}
\end{eqnarray}
where the symbol $\not\exists \, $ indicates that the corresponding limit does not exist because depends on the direction followed to reach the accumulation point under consideration. From Eq.~(\ref{eq:dir.dens.lims}) it is noticeable that the limiting value at the $i$-th vertex of the simplex, when different from 0 or $+\infty$, depends only on the $i$-th shape parameter under unitary values for the remaining ones. Such state gets worse if $\alpha_i=1$ for every $i=1,2,3$; in fact, in this latter case the bivariate Dirichlet density reduces to the Uniform on $\mathcal{S}^2$ and all its limiting values equal 2. Finally, by Eq.~(\ref{eq:poch.symb}), the mixed raw moment of order $(r_1,r_2)$ of the $\mbox{\normalfont{Dir}}^{\, 2}\left(\alpha_1,\alpha_2,\alpha_3\right)$ distribution can be stated as
\begin{equation}
\mathbb{E}\left(X_1^{\, r_1} \, X_2^{\, r_2}\right)=\frac{\left(\alpha_1\right)_{r_1} \, \left(\alpha_2\right)_{r_2}}{\left(\alpha^+\right)_{r^+}} \, , \qquad \left\{\begin{array}{l} r_1, \, r_2 \in \mathbb{N} \\ \\ r^+=r_1+r_2 \end{array} \right. \, .
\label{eq:dir.mixr1r2mom}
\end{equation}

The bivariate Kummer-Beta distribution is the first generalization of the Dirichlet taken into account herein. This model is briefly recalled in the following for what is of interest in the present analysis; however, for further results and properties, the reader can be referred to \cite{BraOroNag11}. Specifically, a bidimensional random vector is said to follow a bivariate Kummer-Beta distribution with shape parameters $\alpha_1$, $\alpha_2$, $\alpha_3$ and additional parameter $\delta \in \mathbb{R}$, denoted by $\mbox{KB}^{\, 2}\left(\alpha_1,\alpha_2,\alpha_3,\delta\right)$, if its joint density can be expressed as the following infinite mixture of Dirichlet densities weighted by the probabilities obtained by normalizing the terms of the infinite sum in Eq.~(\ref{eq:f11}) where $a=\alpha_3$, $b=\alpha^+$, $x=\delta$:
\begin{eqnarray}
\lefteqn{\mbox{KB}^{\, 2}\left(x_1,x_2;\alpha_1,\alpha_2,\alpha_3,\delta\right)= \quad}\nonumber \\
& = & \sum_{j=0}^{+\infty} \frac{\frac{\left(\alpha_3\right)_j}{\left(\alpha^+\right)_j} \frac{\delta^j}{j!}}{_1F_1\left(\alpha_3;\alpha^+;\delta\right)} \; \mbox{Dir}^{\, 2}\left(x_1,x_2;\alpha_1,\alpha_2,\alpha_3+j\right) \, , \qquad \left(x_1,x_2\right) \in \mathcal{S}^{\, 2} \, . 
\label{eq:kb2.dens.mixt}
\end{eqnarray}
\noindent By simple computations, Eq.~(\ref{eq:kb2.dens.mixt}) can be equivalently stated in terms of the following perturbation of the bivariate Dirichlet density:
$$\mbox{KB}^{\, 2}\left(x_1,x_2;\alpha_1,\alpha_2,\alpha_3,\delta\right)=\mbox{Dir}^{\, 2}\left(x_1,x_2;\alpha_1,\alpha_2,\alpha_3\right) \,  \frac{e^{\, \left(1-x_1-x_2\right)\, \delta}}{_1F_1\left(\alpha_3;\alpha^+;\delta\right)} \, , \quad \left(x_1,x_2\right) \in \mathcal{S}^{\, 2} \, ,$$
which, in turn, by Eq.~(\ref{eq:kummer.first.teo}), can be rewritten in the final form reported by the above cited reference: 
\begin{eqnarray}
\lefteqn{\mbox{KB}^{\, 2}\left(x_1,x_2;\alpha_1,\alpha_2,\alpha_3,\delta\right)= \quad}\nonumber \\
& = & \mbox{Dir}^{\, 2}\left(x_1,x_2;\alpha_1,\alpha_2,\alpha_3\right) \,  \frac{e^{\, -\left(x_1+x_2\right)\, \delta}}{_1F_1\left(\alpha_1+\alpha_2;\alpha^+;-\delta\right)}\, , \quad \left(x_1,x_2\right) \in \mathcal{S}^{\, 2} \, . 
\label{eq:kb2.dens}
\end{eqnarray}
Under unitary shape parameters the behavior of the KB$^{\, 2}$ density at the vertices of the unit simplex turns out to be more flexible than that of the bivariate Dirichlet thanks to its additional parameter $\delta$. More precisely, the limits of the KB$^{\, 2}$ density take on the following values:
\begin{eqnarray*}
\lefteqn{\mbox{\normalfont{KB}}^{\, 2}\left(x_1,x_2;\alpha_1,\alpha_2,\alpha_3,\delta\right) \quad \rightarrow}\\
& \stackrel{\left(x_1,\, x_2\right) \, \rightarrow \, \left(1,0\right)}{\rightarrow} & \left\{\begin{array}{ll}
\mbox{\normalfont{if} }\, \alpha_2+\alpha_3<2 \, : & + \, \infty\\
\mbox{\normalfont{if} }\, \alpha_2+\alpha_3=2 \, : & \left\{ \begin{array}{ll} \mbox{\normalfont{if} }\, \alpha_2=\alpha_3=1 \, : & \frac{\alpha_1\left(\alpha_1+1\right) \, e^{-\delta}}{_1F_1\left(\alpha_1+1;\alpha_1+2;-\delta\right)} \\ \mbox{\normalfont{otherwise:} }\, & \not\exists \end{array}\right.\\
\mbox{\normalfont{if} }\, \alpha_2+\alpha_3>2 \, : & 0 
\end{array}\right. \\
& \stackrel{\left(x_1,\, x_2\right) \, \rightarrow \, \left(0,1\right)}{\rightarrow} & \left\{\begin{array}{ll}
\mbox{\normalfont{if} }\, \alpha_1+\alpha_3<2 \, : & + \, \infty\\
\mbox{\normalfont{if} }\, \alpha_1+\alpha_3=2 \, : & \left\{ \begin{array}{ll} \mbox{\normalfont{if} }\, \alpha_1=\alpha_3=1 \, : & \frac{\alpha_2\left(\alpha_2+1\right) \, e^{-\delta}}{_1F_1\left(1+\alpha_2;2+\alpha_2;-\delta\right)} \\ \mbox{\normalfont{otherwise:} }\, & \not\exists \end{array}\right.\\
\mbox{\normalfont{if} }\, \alpha_1+\alpha_3>2 \, : & 0 
\end{array}\right. \\
& \stackrel{\left(x_1,\, x_2\right) \, \rightarrow \, \left(0,0\right)}{\rightarrow} & \left\{\begin{array}{ll}
\mbox{\normalfont{if} }\, \alpha_1+\alpha_2<2 \, : & + \, \infty\\
\mbox{\normalfont{if} }\, \alpha_1+\alpha_2=2 \, : & \left\{ \begin{array}{ll} \mbox{\normalfont{if} }\, \alpha_1=\alpha_2=1 \, : & \frac{\alpha_3\left(\alpha_3+1\right)}{_1F_1\left(2;2+\alpha_3;-\delta\right)} \\ \mbox{\normalfont{otherwise:} }\, & \not\exists \end{array}\right.\\
\mbox{\normalfont{if} }\, \alpha_1+\alpha_2>2 \, : & 0 
\end{array}\right. \, ,
\end{eqnarray*}
where the symbol $\not\exists \,$ indicates that the corresponding limit does not exist because depends on the direction followed to reach the accumulation point under consideration.

The non-central extension of the Chi-Squared model \cite{JohKotBal95} represents the main ingredient for the definition and the analysis of the existing Non-central Dirichlet distribution, this latter being the second generalization of the Dirichlet considered in the present paper. Specifically, a Non-central Chi-Squared random variable $Y'$ with $g>0$ degrees of freedom and non-centrality parameter $\lambda \geq 0$, denoted by $\chi'^{\,2}_g \left(\lambda \right)$, can be characterized by means of the following mixture representation:
\begin{equation}
Y' \sim \chi'^{\,2}_g \left(\lambda \right) \qquad \Leftrightarrow \qquad Y'\,| \, M \; \sim \; \chi^{\, 2}_{g+2M} \, , \qquad \mbox{where} \; \; M \sim \mbox{Poisson}\left(\lambda/2\right) \, ,
\label{eq:mixrepres.ncchisq}
\end{equation}
the case $\lambda=0$ corresponding to the $\chi^{\, 2}_g$ distribution. Moreover, such a random variable can be additively decomposed into two independent parts, a central one with $g$ degrees of freedom and a purely non-central one with non-centrality parameter $\lambda$, namely
\begin{equation}
Y'=Y+\sum_{j=1}^{M}F_j \, , \quad \mbox{\normalsize{where}} \; \; \;  Y \sim \chi^{\, 2}_g \quad \bot \quad M \sim \mbox{\normalfont{Poisson}}\left(\lambda/2\right) \quad \bot \quad \{F_j \stackrel{\bot}{\sim } \chi^{\, 2}_2 \} \,  .
\label{eq:sumrepres.ncchisq}
\end{equation}
\noindent By virtue of Eq.~(\ref{eq:sumrepres.ncchisq}), the random variable $Y'_{pnc}=\sum_{j=1}^{M}F_j$ is said to have a Purely Non-central Chi-Squared distribution with non-centrality parameter $\lambda$. Indeed, we shall denote it by $\chi'^{\,2}_0 \left(\lambda \right)$, its number of degrees of freedom being equal to zero \cite{Sie79}. By Eq.~(\ref{eq:mixrepres.ncchisq}), the density function $f_{Y'}$ of $Y' \sim \chi'^{\,2}_g \left(\lambda \right)$ can be expressed as
\begin{equation}
f_{Y'}\left(y;g,\lambda\right)=\sum_{i=0}^{+\infty}\frac{e^{-\frac{\lambda}{2}}\left(\frac{\lambda}{2}\right)^i}{i!}
\frac{y^{\frac{g+2i}{2}-1} \, e^{-\frac{y}{2}}}{\Gamma\left(\frac{g+2i}{2}\right)2^{\frac{g+2i}{2}}}, \quad y>0 \, ,
\label{eq:dens.ncchisq}
\end{equation}
i.e. as the infinite series of the $\chi^2_{g+2i}$ densities, $i \in \mathbb{N} \cup \{0\}$, weighted by the probabilities of $M \sim \mbox{\normalfont{Poisson}}\left(\lambda/2\right)$. In this regard, the case $g=2$ is of prominent interest in the present setting; in fact, in this case the density in Eq.~(\ref{eq:dens.ncchisq}) exhibits a flexible limiting behavior on varying the non-centrality parameter by allowing its limit at $0$ to take on the following expression:
\begin{equation}
\lim_{y \rightarrow 0^+} f_{Y'}\left(y; 2,\lambda\right)=\frac{1}{2} \, e^{-\frac{\lambda}{2}} \, .
\label{eq:lim0.dens.ncchisq}
\end{equation}
Finally, the Non-central Chi-Squared distribution is reproductive with respect to both the number of degrees of freedom and the non-centrality parameter; specifically:
\begin{equation}
Y'_i \stackrel{\bot}{\sim } \chi'^{\, 2}_{g_i}(\lambda_i) \quad i=1,\, \ldots \, ,m  \qquad \Rightarrow \qquad \left\{\begin{array}{l} Y'^{+}=\sum_{i=1}^m Y'_i \sim \chi'^{\, 2}_{g^+}(\lambda^+) \\ \\ g^+=\sum_{i=1}^m g_i \, , \; \lambda^+=\sum_{i=1}^m \lambda_i \end{array} .\right.
\label{eq:ncchisq.reprod}
\end{equation}

That said, the $D$-dimensional Non-central Dirichlet model with vector of shape parameters $\underline{\alpha}=\left(\alpha_1,\, \ldots \, ,\alpha_{D+1}\right)$ and vector of non-centrality parameters $\underline{\lambda}=\left(\lambda_1,\, \ldots \, ,\lambda_{D+1}\right)$, $\lambda_i \geq 0$, $i=1,\, \ldots \,,D+1$, denoted by $\mbox{NcDir}^{\, D}\left(\underline{\alpha},\underline{\lambda}\right)$, can be easily defined by replacing the $Y_i \,$'s by $Y'_i \stackrel{\bot}{\sim } \chi'^{\,2}_{2 \alpha_i} \left(\lambda_i \right)$ in Eq.~(\ref{eq:dir.def}) as follows:
\begin{eqnarray}
\lefteqn{\left\{\begin{array}{l} Y'_i \stackrel{\bot}{\sim } \chi'^{\,2}_{2 \alpha_i} \left(\lambda_i \right) \quad i=1,\, \ldots \, ,D+1 \\ \\ Y'^{+}=\sum_{i=1}^{D+1} Y'_i  \end{array} \right. \Rightarrow } \nonumber \\
& \Rightarrow & \quad \underline{X}'=\left(X'_1,\, \ldots \, ,X'_D\right)=\left(\frac{Y'_1}{Y'^{+}},\, \ldots \, ,\frac{Y'_D}{Y'^{+}}\right) \sim \mbox{NcDir}^{\, D}\left(\underline{\alpha},\underline{\lambda}\right)
\label{eq:ncdir.def}
\end{eqnarray}
\noindent \cite{SanNagGup06}. The density function of the $\mbox{NcDir}^{\, D}\left(\underline{\alpha},\underline{\lambda}\right)$ distribution can be simply derived by following the next arguments; in this regard, let:
\begin{eqnarray}
\lefteqn{\underline{M}=\left(M_1,\, \ldots \, ,M_{D+1}\right) \sim \mbox{\normalfont{Multi-Poisson}}^{\, D+1}(\underline{\lambda} \, / \, 2) \qquad \Leftrightarrow}\nonumber \\
& \qquad \Leftrightarrow \qquad & M_i \stackrel{\bot}{\sim } \mbox{\normalfont{Poisson}}(\lambda_i \, / \, 2) \, , \; \quad  i=1,\, \ldots \, ,D+1 \; .
\label{eq:multipois.def}
\end{eqnarray}
Then, in the notation of Eq.~(\ref{eq:ncdir.def}), by Eq.~(\ref{eq:mixrepres.ncchisq}) the conditional distribution of $Y'_i$ given $\underline{M}$ is of the type $\chi^{\, 2}_{2 \alpha_i+2 M_i}$, $i=1,\, \ldots \, ,D+1$; therefore, by Eq.~(\ref{eq:dir.def}):
\begin{equation}
\underline{X}' \sim \mbox{NcDir}^{\, D}\left(\underline{\alpha},\underline{\lambda}\right) \qquad \Leftrightarrow \qquad \left\{\begin{array}{l} \left. \underline{X}' \, \right| \, \underline{M}  \sim \mbox{Dir}^{\, D}\left(\underline{\alpha}+\underline{M}\right) \\ \\ \underline{M} \sim \mbox{\normalfont{Multi-Poisson}}^{\, D+1}(\underline{\lambda} \, / \, 2) \end{array} \right. \, ,
\label{eq:ncdir.mixt.repres}
\end{equation}
which is the mixture representation of the NcDir distribution. Hence, the joint density of $\underline{X}' \sim \mbox{NcDir}^{\, D}\left(\underline{\alpha},\underline{\lambda}\right)$ can be stated as
\begin{equation}
\mbox{NcDir}^{\, D}\left(\underline{x};\underline{\alpha},\underline{\lambda}\right)=\sum_{\underline{j} \in \mathbb{N}_0^{\, D+1}} \left[\Pr\left(\underline{M}=\underline{j}\right) \cdot \mbox{Dir}^{\, D}\left(\underline{x};\underline{\alpha}+\underline{j}\right)\right] \, , \qquad \underline{x} \in \mathcal{S}^{\, D} \, ,
\label{eq:ncdir.dens}
\end{equation}
i.e. as the multiple infinite series of the $\mbox{Dir}^{\, D}\left(\underline{\alpha}+\underline{j}\right)$ densities, $\underline{j} \in \mathbb{N}_0^{\, D+1}$, weighted by the joint probabilities of the random vector $\underline{M}$ defined in Eq.~(\ref{eq:multipois.def}). Moreover, the function in Eq.~(\ref{eq:ncdir.dens}) can be equivalently expressed in terms of perturbation of the corresponding central case as follows:
\begin{eqnarray}
\lefteqn{\mbox{NcDir}^{\, D}\left(\underline{x};\underline{\alpha},\underline{\lambda}\right)=\mbox{Dir}^{\, D}\left(\underline{x};\underline{\alpha}\right) \quad \cdot} \nonumber \\
& \cdot & e^{-\frac{\lambda^+}{2}} \,  \Psi_2^{\, (D+1)}\left[\alpha^+;\underline{\alpha};\frac{\lambda_1}{2}x_1,\, \ldots \, ,\frac{\lambda_D }{2}x_D,\frac{\lambda_{D+1}}{2}\left(1-\sum_{i=1}^{D}x_i\right)\right] \, , \qquad \underline{x} \in \mathcal{S}^{\, D} \qquad
\label{eq:ncdir.perturb.dens}
\end{eqnarray}
where 
\begin{equation}
\Psi_2^{\, (m)}\left[a;b_1,\, \ldots \, ,b_m;x_1,\, \ldots \, ,x_m\right]=\sum_{j_1,\, \ldots \, , \, j_m= \, 0}^{+\infty}\frac{(a)_{j_1+\, \ldots \, +j_m}}{(b_1)_{j_1} \, \ldots \, (b_m)_{j_m}} \, \frac{x_1^{\, j_1}}{j_1!} \, \ldots \, \frac{x_m^{\, j_m}}{j_m!}
\label{eq:ncdir.perturb}
\end{equation}
is the $m$-dimensional ($m>2$) generalization of the Humbert's confluent hypergeometric function $\Psi_2$ \cite{SriKar85}. Unfortunately, the perturbation representation of the Non-central Dirichlet density in Eq.~(\ref{eq:ncdir.perturb.dens}) highlights the uneasy tractability and interpretability of this latter. Indeed, regardless of the constant term, the Dirichlet density is perturbed by a function in $D+1$ variables given by the sum of the multiple power series in Eq.~(\ref{eq:ncdir.perturb}), that, as far as we know, cannot be reduced to a more easily tractable analytical form. In this regard, the code of a routine implemented in the programming environment \texttt{R} for the computation of the function in Eq.~(\ref{eq:ncdir.perturb}) with $m=3$ is provided in the appendix contained in Section~\ref{sec:r.funcs}.\\
\noindent Now let $D$ equal 2. By simple computations it is easy to see that, despite its poor tractability from a mathematical standpoint, the limiting values of the NcDir$^{\, 2}$ density interestingly take on the following simple expressions:
\begin{eqnarray*}
\lefteqn{\mbox{\normalfont{NcDir}}^{\, 2}\left(x_1,x_2;\alpha_1,\alpha_2,\alpha_3,\lambda_1,\lambda_2,\lambda_3\right) \quad \rightarrow}\\
& \stackrel{\left(x_1,\, x_2\right) \, \rightarrow \, \left(1,0\right)}{\rightarrow} & \left\{\begin{array}{ll}
\mbox{\normalfont{if} }\, \alpha_2+\alpha_3<2 \, : & + \, \infty\\
\mbox{\normalfont{if} }\, \alpha_2+\alpha_3=2 \, : & \left\{ \begin{array}{ll} \mbox{\normalfont{if} }\, \alpha_2=\alpha_3=1 \, : & e^{\, -\frac{\lambda_2+\lambda_3}{2}} \cdot \\ & \left[\left(\frac{\lambda_1}{2}\right)^2 +\left(\alpha_1+1\right)\left(\alpha_1+\lambda_1\right)\right]  \\ \mbox{\normalfont{otherwise:} }\, & \not\exists \end{array}\right.\\
\mbox{\normalfont{if} }\, \alpha_2+\alpha_3>2 \, : & 0 
\end{array}\right. \\
& \stackrel{\left(x_1,\, x_2\right) \, \rightarrow \, \left(0,1\right)}{\rightarrow} & \left\{\begin{array}{ll}
\mbox{\normalfont{if} }\, \alpha_1+\alpha_3<2 \, : & + \, \infty\\
\mbox{\normalfont{if} }\, \alpha_1+\alpha_3=2 \, : & \left\{ \begin{array}{ll} \mbox{\normalfont{if} }\, \alpha_1=\alpha_3=1 \, : & e^{\, -\frac{\lambda_1+\lambda_3}{2}} \cdot \\ & \left[\left(\frac{\lambda_2}{2}\right)^2 +\left(\alpha_2+1\right)\left(\alpha_2+\lambda_2\right)\right] \\ \mbox{\normalfont{otherwise:} }\, & \not\exists \end{array}\right.\\
\mbox{\normalfont{if} }\, \alpha_1+\alpha_3>2 \, : & 0 
\end{array}\right. \\
& \stackrel{\left(x_1,\, x_2\right) \, \rightarrow \, \left(0,0\right)}{\rightarrow} & \left\{\begin{array}{ll}
\mbox{\normalfont{if} }\, \alpha_1+\alpha_2<2 \, : & + \, \infty\\
\mbox{\normalfont{if} }\, \alpha_1+\alpha_2=2 \, : & \left\{ \begin{array}{ll} \mbox{\normalfont{if} }\, \alpha_1=\alpha_2=1 \, : & e^{\, -\frac{\lambda_1+\lambda_2}{2}} \cdot \\ & \left[\left(\frac{\lambda_3}{2}\right)^2 +\left(\alpha_3+1\right)\left(\alpha_3+\lambda_3\right)\right] \\ \mbox{\normalfont{otherwise:} }\, & \not\exists \end{array}\right.\\
\mbox{\normalfont{if} }\, \alpha_1+\alpha_2>2 \, : & 0 
\end{array}\right.
\end{eqnarray*}
Hence, thanks to its richer parametrization based on three non-centrality parameters, the density of the bivariate Non-central Dirichlet distribution enables to overcome the aforementioned limitations of the Dirichlet by allowing its own limits at the vertices of the unit simplex to have arbitrary positive and finite values under unitary shape parameters. This feature, which results from the flexibility of the limit at 0 of the Non-central Chi-Squared density previously noticed in Eq.~(\ref{eq:lim0.dens.ncchisq}), makes the NcDir model potentially apt to properly capture the data portions having values next to the vertices of the support.

% ***************************************************************************************************************************************************
% ***************************************************************************************************************************************************
% ***************************************************************************************************************************************************
% ***************************************************** 3. A NEW NON-CENTRAL DIRICHLET MODEL ********************************************************
% ***************************************************************************************************************************************************
% ***************************************************************************************************************************************************
% ***************************************************************************************************************************************************

\section{A new Non-central Dirichlet model}
\label{sec:new.ncdir}
 
% ***************************************************************************************************************************************************
% ******************************************************* 3.1 DEFINITION AND DISTRIBUTION ***********************************************************
% ***************************************************************************************************************************************************

\subsection{Definition and distribution}
\label{subsec:cncdir.def.distr}

The above recalled characterizing property of independent Gamma random variables is no longer valid in the non-central setting. This fact covers a key role in the present paper and, more specifically, is assumed as the starting point in the derivation of the new family of non-central generalizations of the Dirichlet distribution we are interested in. Indeed, in the notation of Eq.~(\ref{eq:ncdir.def}), a new Non-central Dirichlet model can be achieved by conditioning the existing one $\underline{X}'$ on the sum $Y'^+$ of the $D+1$ independent Non-central Chi-Squared random variables involved in its definition. Such a model is thus distributed according to the conditional distribution of $\underline{X}'$ given $Y'^+$; moreover, as $\underline{X}'$ is not independent of $Y'^+$, this distribution must be different from the $\mbox{NcDir}^{\, D}\left(\underline{\alpha},\underline{\lambda}\right)$ except for $\underline{\lambda}=\underline{0}$, this latter case corresponding to the Dirichlet. In this regard, the conditional density of $\underline{X}'$ given $Y'^+=y$ can be obtained for any fixed $y>0$ by mixing the conditional density of $\underline{X}'$ given $(\underline{M}, Y'^+=y)$ with respect to the conditional probability mass function of $\underline{M}$ given $Y'^+=y$. As $\underline{X}'$ is conditionally independent of $Y'^+$ given $\underline{M}$ by Property~\ref{prope:char.prop.chisq}, in light of Eq.~(\ref{eq:ncdir.mixt.repres}) we have that $\underline{X}'\left| \right.(\underline{M}, Y'^+=y) \sim \mbox{Dir}^{\, D}\left(\underline{\alpha}+\underline{M}\right)$, while a direct application of Bayes' Theorem shows that:
\begin{eqnarray}
\lefteqn{\Pr\left(\left. \underline{M}=\underline{j} \; \right| \; Y'^+=y \right)=} \nonumber \\
& = & \Pr\left(\left. M_1=j_1 \, , \, \ldots \, , \, M_{D+1}=j_{D+1} \; \right| \; Y'^+=y \right) \qquad \left\{\begin{array}{l} j_i \in \mathbb{N}_0 \, , \quad i=1,\, \ldots \, ,D+1 \\ j^+=\sum_{i=1}^{D+1} j_i \end{array}  \right. \nonumber \\
& = & \frac{1}{_0F_1\left(\alpha^+; \frac{y \, \lambda^+}{4}\right)} \, \frac{1}{\left(\alpha^+\right)_{j^+}}  \prod_{i=1}^{D+1}\frac{\left(\frac{y \, \lambda_{i}}{4}\right)^{j_i}}{j_i \, !} \; . 
\label{eq:cncdir.mix.distr}
\end{eqnarray}
Therefore, by incorporating the conditioning value $y$ in the $\lambda_{i}$'s, without loss of generality we are led to the following definitions.

% ***************************************************************************************************************************************************
% *********************************** DEFINITION 3.1: D-VARIATE CONDITIONAL NON-CENTRAL DIRICHLET DISTRIBUTION **************************************
% ***************************************************************************************************************************************************
\begin{definition}[$D$-variate Conditional Non-central Dirichlet distribution]
\label{def:cncdir.def}
The $D$-variate Conditional Non-central Dirichlet distribution with vector of shape parameters $\underline{\alpha}=\left(\alpha_1,\, \ldots \, ,\alpha_{D+1}\right)$ and vector of non-centrality parameters $\underline{\lambda}=\left(\lambda_1,\, \ldots \, ,\lambda_{D+1}\right)$, denoted by $\mbox{\normalfont{CNcDir}}^{\, D}\left(\underline{\alpha},\underline{\lambda}\right)$, is the distribution of the $D$-dimensional random vector $\underline{X}''=\left(X''_1, \, \ldots \, , X''_D\right)$ defined as follows:
\begin{eqnarray}
\lefteqn{\left\{\begin{array}{l} Y'_i \stackrel{\bot}{\sim } \chi'^{\,2}_{2 \alpha_i} \left(\lambda_i \right) \quad i=1,\, \ldots \, ,D+1 \\ \\ Y'^{+}=\sum_{i=1}^{D+1} Y'_i  \end{array} \right. \quad \Rightarrow } \nonumber \\
& \Rightarrow & \quad \underline{X}''=\left.\left(\frac{Y'_1}{Y'^{+}},\, \ldots \, ,\frac{Y'_D}{Y'^{+}}\right) \, \right| Y'^+=1 \quad \sim \;  \mbox{\normalfont{CNcDir}}^{\, D}\left(\underline{\alpha},\underline{\lambda}\right) \, .
\label{eq:cncdir.def}
\end{eqnarray}
\end{definition}
% ***************************************************************************************************************************************************
% ***************************************************************************************************************************************************

% ***************************************************************************************************************************************************
% ******************************************* DEFINITION 3.2: (D+1)-VARIATE MIXTURE WEIGHT DISTRIBUTION *********************************************
% ***************************************************************************************************************************************************
\begin{definition}[$(D+1)$-variate Mixture Weight distribution]
\label{def:multivar.mw}
The $(D+1)$-variate Mixture Weight distribution with shape parameter $\alpha^+$ and vector of non-centrality parameters $\underline{\lambda}=\left(\lambda_1,\, \ldots \, ,\lambda_{D+1}\right)$, denoted by $\mbox{\normalfont{MW}}^{\, D+1}\left(\alpha^+,\underline{\lambda}\right)$, is the distribution of the $(D+1)$-dimensional random vector $\underline{N}=\left(N_1, \, \ldots \, , N_{D+1}\right)$ obtained as follows:
\begin{eqnarray}
\lefteqn{\left\{\begin{array}{l} \underline{M} \sim \mbox{\normalfont{Multi-Poisson}}^{\, D+1}(\underline{\lambda} \, / \, 2) \\ \\ Y'_i \stackrel{\bot}{\sim } \chi'^{\,2}_{2 \alpha_i} \left(\lambda_i \right) \, , \; \; i=1,\, \ldots \, ,D+1 \\ \\   Y'^{+}=\sum_{i=1}^{D+1} Y'_i  \end{array} \right. \quad \Rightarrow}\nonumber \\
& \qquad \qquad \qquad \Rightarrow \qquad & \underline{N}=\left. \underline{M} \; \right| \, Y'^+=1 \quad \sim \; \mbox{\normalfont{MW}}^{\, D+1}\left(\alpha^+,\underline{\lambda}\right) \; .
\label{eq:multivar.mw}
\end{eqnarray}
\end{definition}
% ***************************************************************************************************************************************************
% ***************************************************************************************************************************************************
\noindent The special case of Eq.~(\ref{eq:multivar.mw}) where $\underline{\lambda}$ is set equal to $\underline{0}$ leads to a random vector degenerate at $\underline{0}$. By Eq.~(\ref{eq:cncdir.mix.distr}) and in view of the foregoing arguments, the joint probability mass function of the $\mbox{\normalfont{MW}}^{\, D+1}\left(\alpha^+,\underline{\lambda}\right)$ distribution takes on the following form: 
\begin{eqnarray}
\lefteqn{\Pr\left(\underline{N}=\underline{j}\right) \qquad =}\nonumber \\
& = & \frac{1}{_0F_1\left(\alpha^+; \frac{\lambda^+}{4}\right)} \, \frac{1}{\left(\alpha^+\right)_{j^+}} \prod_{i=1}^{D+1}\frac{\left(\frac{\lambda_{i}}{4}\right)^{j_i}}{j_i \, !} \, , \qquad \left\{\begin{array}{l} \underline{j}=\left(j_1,	\, \ldots \, ,j_{D+1}\right) \in \mathbb{N}_0^{\, D+1} \\ \\ j^+=\sum_{i=1}^{D+1} j_i \end{array} \right. .
\label{eq:multidim.peso.prob}
\end{eqnarray}
\noindent Therefore, the density function of $\underline{X}'' \sim \mbox{\normalfont{CNcDir}}^{\, D}\left(\underline{\alpha},\underline{\lambda}\right)$ can be accordingly expressed as follows:
\begin{equation}
\mbox{CNcDir}^{\, D}\left(\underline{x};\underline{\alpha},\underline{\lambda}\right)=\sum_{\underline{j} \in \mathbb{N}_0^{\, D+1}} \left[\Pr\left(\underline{N}=\underline{j}\right) \cdot \mbox{Dir}^{\, D}\left(\underline{x};\underline{\alpha}+\underline{j}\right)\right] \, , \qquad \underline{x} \in \mathcal{S}^{\, D} \, ,
\label{eq:cncdir.dens}
\end{equation}
i.e. as the multiple infinite series of the $\mbox{Dir}^{\, D}\left(\underline{\alpha}+\underline{j}\right)$ densities, $\underline{j} \in \mathbb{N}_0^{\, D+1}$, weighted by the joint probabilities of the random vector $\underline{N}$ in Eq.~(\ref{eq:multidim.peso.prob}). Hence, the $\mbox{\normalfont{CNcDir}}$ density shows the same mixture type form as the $\mbox{\normalfont{NcDir}}$ one; the only difference lies in the mixing distribution, given by the $\mbox{MW}$ for the former and the $\mbox{\normalfont{Multi-Poisson}}$ for the latter. In this regard, an overview of the properties of the multivariate Mixture Weight distribution is deferred to the next subsection. More interestingly, by simple computations it is easy to see that the CNcDir density turns out to be equivalently stated in terms of the following perturbation of the corresponding central case:
\begin{eqnarray}
\lefteqn{\mbox{CNcDir}^{\, D}\left(\underline{x};\underline{\alpha},\underline{\lambda}\right) \quad = \quad \mbox{Dir}^{\, D}\left(\underline{x};\underline{\alpha}\right) \cdot} \nonumber \\
& \cdot & \frac{\left[\prod_{i=1}^{D} \, _0F_1\left(\alpha_i;\frac{\lambda_i}{4} x_i\right)\right]  \, _0F_1\left[\alpha_{D+1};\frac{\lambda_{D+1}}{4}\left(1-\sum_{i=1}^{D}x_i\right)\right]}{_0F_1\left(\alpha^+;\frac{\lambda^+}{4}\right)} \, , \qquad \underline{x} \in \mathcal{S}^D \, .
\label{eq:cncdir.perturb.dens}
\end{eqnarray}
From Eq.~(\ref{eq:cncdir.perturb.dens}) the major tractability and interpretability of the $\mbox{\normalfont{CNcDir}}$ density over the $\mbox{\normalfont{NcDir}}$ one prove evident. Contrarily to the standard non-central case where the perturbing factor of the Dirichlet density shows an inner structure that is too complex to be handled analytically, with reference to the density of the conditional model the perturbing effect can be clearly noticed for each value of the parameter vector. More precisely, in Eq.~(\ref{eq:cncdir.perturb.dens}), regardless of the constant term, the Dirichlet density is perturbed by the product of $D+1$ generalized hypergeometric functions $_0F_1$ which show perfectly symmetric behaviors. Specifically, as $\lambda_i$ gets higher, $i=1,\, \ldots \, , D+1$, the corresponding function $_0F_1$ gives more weight to the tail of the Dirichlet density relative to the $i$-th vertex of the unit simplex; moreover, the less is the corresponding shape parameter $\alpha_i$, the larger is the extent of this phenomenon.

The Conditional Non-central Dirichlet model shares with the Dirichlet and the existing Non-central Dirichlet some important properties that are discussed below.

The first one is the closure under permutation, an apparently trivial property which expresses the fact that the $\mbox{\normalfont{CNcDir}}$ distribution treats all its components in a completely symmetric way.
% ***************************************************************************************************************************************************
% ************************************************* PROPOSITION 3.1: CLOSURE UNDER PERMUTATION ******************************************************
% ***************************************************************************************************************************************************
\begin{proposition}[Closure under permutation]
\label{propo:cncdir.permut}
Let $\underline{X}''=\left(X''_1, \, \ldots \, , X''_D\right) \sim \mbox{\normalfont{CNcDir}}^{\, D}\left(\underline{\alpha},\underline{\lambda}\right)$ where $\underline{\alpha}=(\alpha_1,\, \ldots \, ,\alpha_D,\alpha_{D+1})$,  $\underline{\lambda}=(\lambda_1,\ldots,$ $\lambda_D,\lambda_{D+1})$ and $(\underline{k})=(k_{(1)},\, \ldots \,, k_{(D)},k_{(D+1)})$ be a permutation of $(1,\, \ldots \, ,D,D+1)$. Then, $\underline{X}''_{(\underline{k})}=(X''_{k_{(1)}}, \, \ldots \, ,X''_{k_{(D)}}) \sim \mbox{\normalfont{CNcDir}}^{\, D} (\underline{\alpha}_{(\underline{k})},\underline{\lambda}_{(\underline{k})})$ where $\underline{\alpha}_{(\underline{k})}=(\alpha_{k_{(1)}},\, \ldots \, ,\alpha_{k_{(D)}},$ $\alpha_{k_{(D+1)}})$ and $\underline{\lambda}_{(\underline{k})}=(\lambda_{k_{(1)}},\, \ldots \, ,\lambda_{k_{(D)}},\lambda_{k_{(D+1)}})$.
\end{proposition}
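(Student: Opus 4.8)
The plan is to argue directly from the stochastic construction in Definition~\ref{def:cncdir.def}, so that no manipulation of the density is needed. First I would attach to the defining Non-central Chi-Squared variables $Y'_1,\, \ldots \, ,Y'_{D+1}$ the full $(D+1)$-dimensional ratio vector whose $i$-th entry is $Y'_i / Y'^{+}$: its first $D$ entries yield $\underline{X}''$ after conditioning on $Y'^{+}=1$, while its $(D+1)$-th entry equals $1-\sum_{i=1}^{D} X''_i$ and is therefore determined by the others. This makes explicit that the permutation $(\underline{k})$, which acts on all $D+1$ indices, operates on a genuinely $(D+1)$-fold symmetric object even though only $D$ of its coordinates are recorded.

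Next I would set $W'_i := Y'_{k_{(i)}}$ for $i=1,\, \ldots \, ,D+1$. Since $(\underline{k})$ is a permutation and the $Y'_i$ are independent, the $W'_i$ are again independent, with $W'_i \sim \chi'^{\,2}_{2\alpha_{k_{(i)}}}\left(\lambda_{k_{(i)}}\right)$; that is, the family $\{W'_i\}$ satisfies exactly the hypotheses of Definition~\ref{def:cncdir.def}, but now with the permuted parameter vectors $\underline{\alpha}_{(\underline{k})}$ and $\underline{\lambda}_{(\underline{k})}$. The decisive observation is that summation is commutative, so that $W'^{+} := \sum_{i=1}^{D+1} W'_i = \sum_{i=1}^{D+1} Y'_i = Y'^{+}$; hence the conditioning event $\{W'^{+}=1\}$ coincides with $\{Y'^{+}=1\}$.

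Putting these together, $\underline{X}''_{(\underline{k})} = \left(Y'_{k_{(1)}}/Y'^{+},\, \ldots \, ,Y'_{k_{(D)}}/Y'^{+}\right) \, | \, Y'^{+}=1 = \left(W'_1/W'^{+},\, \ldots \, ,W'_D/W'^{+}\right) \, | \, W'^{+}=1$, which is precisely the random vector to which Definition~\ref{def:cncdir.def} assigns the law $\mbox{\normalfont{CNcDir}}^{D}\left(\underline{\alpha}_{(\underline{k})},\underline{\lambda}_{(\underline{k})}\right)$, establishing the claim.

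I expect no serious obstacle here: the only point that demands care is the bookkeeping around the suppressed $(D+1)$-th coordinate, since $(\underline{k})$ may carry the index $D+1$ into one of the first $D$ recorded positions and vice versa, so one must keep track of how the constrained entry $1-\sum_{i=1}^{D} X''_i$ is reshuffled. As an independent cross-check I would verify the conclusion against the density in the perturbation form of Eq.~(\ref{eq:cncdir.perturb.dens}): there the normalizing factor $_0F_1\left(\alpha^{+};\lambda^{+}/4\right)$ depends only on the permutation-invariant quantities $\alpha^{+}$ and $\lambda^{+}$, the $D+1$ numerator factors $_0F_1\left(\alpha_i;\frac{\lambda_i}{4}x_i\right)$ (with the last evaluated at $1-\sum_{i=1}^{D}x_i$) are merely reordered by $(\underline{k})$, and the Dirichlet factor is itself closed under permutation; this recovers the representation-based argument.
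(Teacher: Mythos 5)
Your proof is correct and takes essentially the same approach as the paper: the paper's own (one-line) proof likewise argues directly from Definition~\ref{def:cncdir.def} by applying the permutation $(\underline{k})$ to the $Y'_i$'s, relying exactly on the facts you spell out — independence and distributional form of the permuted variables and the permutation-invariance of $Y'^{+}$, hence of the conditioning event. Your write-up merely makes explicit the bookkeeping (the suppressed $(D+1)$-th coordinate and the density cross-check) that the paper leaves implicit.
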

\begin{proof}
The proof directly follows from the definition of the model at study in Eq.~(\ref{eq:cncdir.def}) by applying the permutation $(\underline{k})$ to the $Y'_i$'s.
\end{proof}
% ***************************************************************************************************************************************************
% ***************************************************************************************************************************************************

Then, like the Dirichlet and the Non-central Dirichlet, the present model satisfies the aggregation property, which is crucial to the derivation of the results established in the following.
% ***************************************************************************************************************************************************
% ************************************************** PROPOSITION 3.2: AGGREGATION PROPERTY **********************************************************
% ***************************************************************************************************************************************************
\begin{proposition}[Aggregation property]
\label{propo:cncdir.aggreg.prop}
Let $\underline{X}''=\left(X''_1, \, \ldots \, , X''_D\right) \sim \mbox{\normalfont{CNcDir}}^{\, D} \left(\underline{\alpha},\underline{\lambda}\right)$ where $\underline{\alpha}=\left(\alpha_1, \, \ldots \, ,\alpha_D, \alpha_{D+1}\right)$, $\underline{\lambda}=\left(\lambda_1,\ldots,\right.$ $\left.\lambda_D,\lambda_{D+1}\right)$. Let $\mathcal{P}=\left\{A_1,\, \ldots \, ,A_m, A_{m+1}\right\}$ be a partition of $\left\{1,\, \ldots \, ,D,D+1\right\}$, $1 \leq m \leq D$. Then:
\begin{equation}
\left(\sum_{i \in A_1} X''_i, \, \ldots \, , \sum_{i \in A_m} X''_i\right) \sim \mbox{\normalfont{CNcDir}}^{\, m}\left(\sum_{i \in A_1} \alpha_i, \, \ldots \, , \sum_{i \in A_{m+1}} \alpha_i,\sum_{i \in A_1} \lambda_i, \, \ldots \, , \sum_{i \in A_{m+1}} \lambda_i\right)
\label{eq:cncdir.aggreg.prop}
\end{equation}
\end{proposition}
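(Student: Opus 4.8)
The plan is to work directly from the stochastic representation in Definition~\ref{def:cncdir.def} and to reduce the aggregated model to that same representation by grouping the underlying Non-central Chi-Squared variables blockwise; the only extra ingredient is the reproductivity of the Non-central Chi-Squared family recorded in Eq.~(\ref{eq:ncchisq.reprod}). First I would realize $\underline{X}''$ through its defining ingredients: let $Y'_i \stackrel{\bot}{\sim} \chi'^{\,2}_{2\alpha_i}(\lambda_i)$ for $i=1,\ldots,D+1$, put $Y'^{+}=\sum_{i=1}^{D+1}Y'_i$, so that $\underline{X}''=(Y'_1/Y'^{+},\ldots,Y'_D/Y'^{+})\,|\,Y'^{+}=1$.

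For each block of the partition I would then define the block sum $Z_k=\sum_{i\in A_k}Y'_i$, $k=1,\ldots,m+1$. Since the blocks $A_1,\ldots,A_{m+1}$ are pairwise disjoint and the $Y'_i$ are mutually independent, the $Z_k$ are themselves mutually independent; and by the reproductivity in Eq.~(\ref{eq:ncchisq.reprod}) each one satisfies $Z_k\sim\chi'^{\,2}_{2\widetilde{\alpha}_k}(\widetilde{\lambda}_k)$, where $\widetilde{\alpha}_k=\sum_{i\in A_k}\alpha_i$ and $\widetilde{\lambda}_k=\sum_{i\in A_k}\lambda_i$. The decisive structural fact is that the grand total is preserved, $Z^{+}:=\sum_{k=1}^{m+1}Z_k=\sum_{i=1}^{D+1}Y'_i=Y'^{+}$.

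Next I would rewrite the aggregated coordinates in terms of the $Z_k$. Because $\underline{x}\mapsto(\sum_{i\in A_1}x_i,\ldots,\sum_{i\in A_m}x_i)$ is a deterministic map, the law of the aggregated vector is the image of the law of $\underline{X}''$ under it, and on the conditioning event $\sum_{i\in A_k}X''_i=\sum_{i\in A_k}Y'_i/Y'^{+}=Z_k/Y'^{+}$. The crux is that aggregation commutes with the conditioning: as $Y'^{+}=Z^{+}$ holds identically, conditioning on $Y'^{+}=1$ is literally the same event as conditioning on $Z^{+}=1$, whence
\[
\left(\sum_{i\in A_1}X''_i,\ldots,\sum_{i\in A_m}X''_i\right)\;\stackrel{d}{=}\;\left(\frac{Z_1}{Z^{+}},\ldots,\frac{Z_m}{Z^{+}}\right)\,\Big|\,Z^{+}=1 .
\]
Reading the right-hand side against Definition~\ref{def:cncdir.def}, now applied to the $m+1$ independent variables $Z_k\sim\chi'^{\,2}_{2\widetilde{\alpha}_k}(\widetilde{\lambda}_k)$, identifies it exactly as $\mbox{CNcDir}^{\,m}(\widetilde{\alpha}_1,\ldots,\widetilde{\alpha}_{m+1},\widetilde{\lambda}_1,\ldots,\widetilde{\lambda}_{m+1})$, which is the claim in Eq.~(\ref{eq:cncdir.aggreg.prop}).

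I expect the main obstacle to be precisely this commutation of aggregation with conditioning, i.e. justifying that forming the block ratios and then conditioning the new total on $1$ yields the same law as conditioning the original total on $1$ and then forming the block ratios. This rests on the identity $Y'^{+}=Z^{+}$ holding identically rather than merely in distribution, together with the mutual independence of the $Z_k$ inherited from that of the $Y'_i$ over disjoint blocks. Should one wish to sidestep reasoning about the measure-zero conditioning event, the same conclusion follows by conditioning on $Y'^{+}=y$ for an arbitrary $y>0$ via Eq.~(\ref{eq:cncdir.mix.distr}), aggregating the Multi-Poisson counts $M_i$ blockwise, and finally setting $y=1$; applying Proposition~\ref{propo:cncdir.permut} first would let me take the blocks to be consecutive, which merely tidies the indexing.
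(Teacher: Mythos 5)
Your proof is correct and follows exactly the route the paper takes: its proof is the one-line observation that the claim ``directly ensues from the definition in Eq.~(\ref{eq:cncdir.def}) and from the reproductive property in Eq.~(\ref{eq:ncchisq.reprod}) by applying the partition $\mathcal{P}$ to the $Y'_i$'s,'' which is precisely your argument with the block sums $Z_k$, their independence and Non-central Chi-Squared laws, and the identity $Z^+ = Y'^+$ making the conditioning events coincide. You have simply made explicit the details (in particular the commutation of aggregation with the conditioning on the total) that the paper leaves implicit.
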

\begin{proof}
The proof directly ensues from the definition of the model at study in Eq.~(\ref{eq:cncdir.def}) and from the reproductive property of the Non-central Chi-Squared distribution in Eq.~(\ref{eq:ncchisq.reprod}) by applying the partition $\mathcal{P}$ to the $Y'_i$'s.
\end{proof}
% ***************************************************************************************************************************************************
% ***************************************************************************************************************************************************

\noindent By suitably resorting to Proposition~\ref{propo:cncdir.aggreg.prop}, the $k$-dimensional marginals of the $D$-dimensional Conditional Non-central Dirichlet model can be simply achieved for every $k=1,\ldots,D-1$. Specifically, like the Dirichlet and the existing Non-central Dirichlet models, the Conditional Non-central Dirichlet is closed under marginalization and simple relationships hold between the parameters of the joint and the marginal distributions. In particular, the univariate marginals are of Conditional Doubly Non-central Beta (CDNcB) type, this latter being the analogue of the CNcDir on the Real interval $(0,1)$. In this regard, a first analysis and an in-depth study of the CDNcB distribution are given in \cite{OngOrs15} and \cite{Ors21}, respectively. In the following, only the univariate and bivariate marginals are made explicit but as far as the investigation of the $k$-dimensional marginals, $k>2$, is concerned, similar results apply.

% ***************************************************************************************************************************************************
% ************************************************ PROPOSITION 3.3  UNIVARIATE AND BIVARIATE MARGINALS **********************************************
% ***************************************************************************************************************************************************
\begin{proposition}[Univariate and bivariate marginals]
\label{propo:cncdir.margs}
Let $\underline{X}'' \sim \mbox{\normalfont{CNcDir}}^{\, D}\left(\underline{\alpha},\underline{\lambda}\right)$. Then, for every $p,\, q=1, \, \ldots \, ,D$, $p \neq q$:
\begin{equation}
X''_p \sim \mbox{\normalfont{CDNcB}}\left(\alpha_p, \alpha^+-\alpha_p,\lambda_p, \lambda^+-\lambda_p\right) \, ,
\label{eq:cncdir.unidim.marg}
\end{equation}
\begin{equation}
\left(X''_p,X''_q\right) \sim \mbox{\normalfont{CNcDir}}^{\, 2}\left(\alpha_p,\alpha_q,\alpha^+-(\alpha_p+\alpha_q),\lambda_p,\lambda_q,\lambda^+-(\lambda_p+\lambda_q)\right) \, . 
\label{eq:cncdir.bidim.marg}
\end{equation}
\end{proposition}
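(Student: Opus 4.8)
The plan is to derive both marginals as immediate consequences of the aggregation property established in Proposition~\ref{propo:cncdir.aggreg.prop}, choosing in each case a partition of the index set $\{1,\ldots,D,D+1\}$ that isolates the components of interest. The point worth noting from the outset is that, since $p,q \in \{1,\ldots,D\}$, the index $D+1$ is never among the selected ones, so the residual block of each partition is automatically nonempty and the property always applies.

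For the univariate marginal I would fix $p \in \{1,\ldots,D\}$ and apply Proposition~\ref{propo:cncdir.aggreg.prop} with $m=1$ to the two-block partition $\mathcal{P}=\{A_1,A_2\}$, where $A_1=\{p\}$ and $A_2=\{1,\ldots,D,D+1\}\setminus\{p\}$. Since $\sum_{i \in A_1} X''_i = X''_p$, the aggregation property yields
\begin{equation*}
X''_p \sim \mbox{\normalfont{CNcDir}}^{\, 1}\left(\alpha_p,\alpha^+-\alpha_p,\lambda_p,\lambda^+-\lambda_p\right),
\end{equation*}
because $\sum_{i \in A_1}\alpha_i=\alpha_p$, $\sum_{i \in A_2}\alpha_i=\alpha^+-\alpha_p$, and likewise for the non-centrality parameters. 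It then remains only to identify the one-dimensional Conditional Non-central Dirichlet with the Conditional Doubly Non-central Beta distribution on $(0,1)$, which is precisely the univariate analogue of the CNcDir recalled above; this identification delivers Eq.~(\ref{eq:cncdir.unidim.marg}).

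For the bivariate marginal I would proceed analogously with $m=2$, now taking the three-block partition $\mathcal{P}=\{A_1,A_2,A_3\}$ with $A_1=\{p\}$, $A_2=\{q\}$ and $A_3=\{1,\ldots,D,D+1\}\setminus\{p,q\}$, which is admissible since $p\neq q$. As $\sum_{i \in A_1} X''_i = X''_p$ and $\sum_{i \in A_2} X''_i = X''_q$, Proposition~\ref{propo:cncdir.aggreg.prop} gives Eq.~(\ref{eq:cncdir.bidim.marg}) directly, with the third shape and non-centrality parameters equal to the residual sums $\alpha^+-(\alpha_p+\alpha_q)$ and $\lambda^+-(\lambda_p+\lambda_q)$.

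Since the aggregation property already permits an arbitrary partition of the index set, no reordering is strictly necessary; should one prefer to work with a canonical labelling, the closure under permutation of Proposition~\ref{propo:cncdir.permut} can be invoked to bring the indices $p$ (resp. $p,q$) into the leading positions. I do not expect any genuine obstacle here: the argument is purely mechanical once Proposition~\ref{propo:cncdir.aggreg.prop} is available, and the only non-computational point is the terminological identification of $\mbox{\normalfont{CNcDir}}^{\, 1}$ with the $\mbox{\normalfont{CDNcB}}$ distribution, which is a matter of definition rather than of calculation.
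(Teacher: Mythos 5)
Your proof is correct and coincides with the paper's own argument: both derive Eq.~(\ref{eq:cncdir.unidim.marg}) from Proposition~\ref{propo:cncdir.aggreg.prop} with $m=1$, $A_1=\{p\}$, $A_2=\{1,\ldots,D,D+1\}\setminus\{p\}$, and Eq.~(\ref{eq:cncdir.bidim.marg}) with $m=2$, $A_1=\{p\}$, $A_2=\{q\}$, $A_3=\{1,\ldots,D,D+1\}\setminus\{p,q\}$. Your explicit remark that $\mbox{\normalfont{CNcDir}}^{\,1}$ is identified with the $\mbox{\normalfont{CDNcB}}$ distribution is a point the paper leaves implicit (it is stated in the discussion preceding the proposition), but it is the same approach.
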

\begin{proof}
The proof of Eq.~(\ref{eq:cncdir.unidim.marg}) ensues from Eq.~(\ref{eq:cncdir.aggreg.prop}) by setting $m=1$ and taking $A_1=\left\{p\right\}$, $A_2=\left\{1,\, \ldots \, ,p-1,p+1,\, \ldots \, ,D,D+1\right\}$ for every $p=1,\, \ldots \, ,D$, whereas the proof of Eq.~(\ref{eq:cncdir.bidim.marg}) follows from Eq.~(\ref{eq:cncdir.aggreg.prop}) by setting $m=2$ and taking $A_1=\left\{p\right\}$, $A_2=\left\{q\right\}$, $A_3=\left\{1,\, \ldots \, ,D,D+1\right\}-\left\{p,q\right\}$, for every $p,q=1,\, \ldots \, ,D$, $p \neq q$.
\end{proof}
% ***************************************************************************************************************************************************
% ***************************************************************************************************************************************************

Another consequence of the aggregation property is about the distribution of the sum of the components of a CNcDir random vector.
% ***************************************************************************************************************************************************
% **************************************************** PROPOSITION 3.4: SUM OF THE COMPONENTS *******************************************************
% ***************************************************************************************************************************************************
\begin{proposition}[Sum of the components]
\label{propo:cncdir.comps.sum}
Let $\underline{X}''=\left(X''_1, \, \ldots \, , X''_D\right) \sim \mbox{\normalfont{CNcDir}}^{\, D}\left(\underline{\alpha},\underline{\lambda}\right)$ where $\underline{\alpha}=\left(\alpha_1,\, \ldots \, ,\alpha_D,\alpha_{D+1}\right)$,  $\underline{\lambda}=\left(\lambda_1,\ldots, \right.$ $\left.\lambda_D,\lambda_{D+1}\right)$. Then, $
X''^+=\sum_{j=1}^{D} X''_j \sim \mbox{\normalfont{CDNcB}}\left(\sum_{j=1}^{D} \alpha_j, \alpha_{D+1},\sum_{j=1}^{D} \lambda_j, \lambda_{D+1}\right)$ .
\end{proposition}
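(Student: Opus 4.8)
The plan is to obtain the result as an immediate corollary of the aggregation property established in Proposition~\ref{propo:cncdir.aggreg.prop}. The quantity $X''^+=\sum_{j=1}^{D} X''_j$ is precisely the sum of the first $D$ components of the random vector, so it is the single aggregated coordinate produced by lumping those components together while keeping the $(D+1)$-th component apart. This mirrors exactly the strategy already used to derive the univariate marginals in Proposition~\ref{propo:cncdir.margs}, the only change being which indices are grouped.

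First I would choose the partition $\mathcal{P}=\left\{A_1,A_2\right\}$ of $\left\{1,\, \ldots \, ,D,D+1\right\}$ given by $A_1=\left\{1,\, \ldots \, ,D\right\}$ and $A_2=\left\{D+1\right\}$, which corresponds to $m=1$ in Eq.~(\ref{eq:cncdir.aggreg.prop}). Since $1 \leq m=1 \leq D$, the hypothesis of Proposition~\ref{propo:cncdir.aggreg.prop} is satisfied, and applying Eq.~(\ref{eq:cncdir.aggreg.prop}) with this partition yields that $\sum_{i \in A_1} X''_i=\sum_{j=1}^{D} X''_j$ follows a $\mbox{\normalfont{CNcDir}}^{\, 1}$ distribution whose shape parameters are $\sum_{j=1}^{D} \alpha_j$ and $\alpha_{D+1}$ and whose non-centrality parameters are $\sum_{j=1}^{D} \lambda_j$ and $\lambda_{D+1}$.

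It then remains only to identify this univariate Conditional Non-central Dirichlet law with the CDNcB distribution appearing in the statement. This is exactly the identification already invoked in the proof of Proposition~\ref{propo:cncdir.margs}, where the univariate marginals were recognized as CDNcB by virtue of the CDNcB being, by definition, the $(0,1)$-supported ($D=1$) instance of the Conditional Non-central Dirichlet family; substituting it in completes the argument. I do not expect any genuine obstacle here: the entire content reduces to the correct bookkeeping of which indices are aggregated, together with the recognition that the one-dimensional $\mbox{\normalfont{CNcDir}}^{\, 1}$ coincides with the CDNcB, both of which are already in place from the preceding propositions. The only point worth stating explicitly is that $\sum_{j=1}^{D}\alpha_j+\alpha_{D+1}=\alpha^+$ and $\sum_{j=1}^{D}\lambda_j+\lambda_{D+1}=\lambda^+$, so the resulting two-shape, two-non-centrality parametrization is consistent with the totals $\alpha^+$ and $\lambda^+$ of the original model.
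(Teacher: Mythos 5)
Your proof is correct and follows exactly the paper's own argument: the paper likewise invokes Proposition~\ref{propo:cncdir.aggreg.prop} with $m=1$, $A_1=\left\{1,\,\ldots\,,D\right\}$ and $A_2=\left\{D+1\right\}$, with the identification of $\mbox{\normalfont{CNcDir}}^{\,1}$ as $\mbox{\normalfont{CDNcB}}$ understood just as in Proposition~\ref{propo:cncdir.margs}. Your additional remarks on parameter bookkeeping are fine but not needed.
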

\begin{proof}
The proof ensues from Proposition~\ref{propo:cncdir.aggreg.prop} by setting $m=1$ and taking $A_1=\left\{1, \, \ldots \, ,D\right\}$, $A_2=\left\{D+1\right\}$.
\end{proof}
% ***************************************************************************************************************************************************
% ***************************************************************************************************************************************************

We end this subsection pointing out that, unlike the standard Non-central Dirichlet case, the Conditional Non-central Dirichlet distribution is closed under conditioning after normalization.

% ***************************************************************************************************************************************************
% ********************************************* PROPOSITION 3.5: CLOSURE UNDER NORMALIZED CONDITIONING **********************************************
% ***************************************************************************************************************************************************
\begin{proposition}[Closure under normalized conditioning]
\label{propo:cncdir.normal.condit}
Let $\underline{X}''=\left(X''_1, \, \ldots \, , X''_D\right) \sim \mbox{\normalfont{CNcDir}}^{\, D}\left(\underline{\alpha},\underline{\lambda}\right)$ where $\underline{\alpha}=\left(\alpha_1,\, \ldots \, ,\alpha_{D+1}\right)$, $\underline{\lambda}=\left(\lambda_1,\, \ldots \, ,\right.$ $\left.\lambda_{D+1}\right)$. Let $\underline{X}''_1=\left(X''_1, \, \ldots \, ,X''_k\right)$ and $\underline{X}''_2=\left(X''_{k+1},\, \ldots \, ,X''_{D}\right)$, $1 \leq k \leq D-1$. Then, the normalized conditional distribution of $\underline{X}''_2$ given $\underline{X}''_1=\underline{x}_1$ is:
\begin{eqnarray}
\lefteqn{\left. \frac{\underline{X}''_2}{1-x_1^+} \right | \, \underline{X}''_1=\underline{x}_1=\left(x_1,\, \ldots \, ,x_k\right), \; x_1^+=\sum_{j=1}^{k}x_j \quad \sim}\nonumber \\
& \sim \quad & \mbox{\normalfont{CNcDir}}^{\, D-k}\left(\underline{\alpha}_{2},\underline{\lambda}_{2}\left(1-x_1^+\right)\right) \, , \qquad \left\{\begin{array}{l} \underline{\alpha}_2=\left(\alpha_{k+1},\, \ldots \, ,\alpha_{D+1}\right) \\ \\ \underline{\lambda}_2=\left(\lambda_{k+1},\, \ldots \, ,\lambda_{D+1}\right) \end{array} \right. \, .
\label{eq:cncdir.normal.condit}
\end{eqnarray}
\end{proposition}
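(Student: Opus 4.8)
The plan is to work with the perturbation representation of the density in Eq.~(\ref{eq:cncdir.perturb.dens}): I would form the conditional density of $\underline{X}''_2$ given $\underline{X}''_1=\underline{x}_1$ as the ratio of the joint density of $\underline{X}''$ to the marginal density of $\underline{X}''_1$, and then carry out the normalizing change of variables $\underline{u}=\underline{x}_2/(1-x_1^+)$. Throughout, set $v=1-x_1^+$, write $\widetilde{\alpha}=\sum_{i=k+1}^{D+1}\alpha_i=\alpha^+-\sum_{i=1}^{k}\alpha_i$ and $\widetilde{\lambda}=\sum_{i=k+1}^{D+1}\lambda_i=\lambda^+-\sum_{i=1}^{k}\lambda_i$, and use the convention $x_{D+1}=1-\sum_{i=1}^{D}x_i$.

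First I would record the marginal law of $\underline{X}''_1$. Applying the aggregation property (Proposition~\ref{propo:cncdir.aggreg.prop}) with the partition formed by the singletons $\{1\},\ldots,\{k\}$ together with the single block $\{k+1,\ldots,D+1\}$ yields $\underline{X}''_1\sim\mbox{\normalfont{CNcDir}}^{\,k}(\alpha_1,\ldots,\alpha_k,\widetilde{\alpha},\lambda_1,\ldots,\lambda_k,\widetilde{\lambda})$, whose density is again given by Eq.~(\ref{eq:cncdir.perturb.dens}). The key observation is that, since aggregation preserves the total shape and non-centrality parameters, the joint and the marginal densities share the same normalizing factor ${}_0F_1(\alpha^+;\lambda^+/4)$.

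Dividing the joint density by the marginal one, the first $k$ hypergeometric factors ${}_0F_1(\alpha_i;\frac{\lambda_i}{4}x_i)$ and the common constant ${}_0F_1(\alpha^+;\lambda^+/4)$ cancel, leaving the ratio of the two Dirichlet densities multiplied by
$$\frac{\left[\prod_{i=k+1}^{D}{}_0F_1\left(\alpha_i;\tfrac{\lambda_i}{4}x_i\right)\right]{}_0F_1\left(\alpha_{D+1};\tfrac{\lambda_{D+1}}{4}x_{D+1}\right)}{{}_0F_1\left(\widetilde{\alpha};\tfrac{\widetilde{\lambda}}{4}v\right)}.$$
The ratio of the two Dirichlet densities is the classical conditional of the Dirichlet, so this step reduces to standard computations.

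Finally I would perform the substitution $u_j=x_{k+j}/v$, $j=1,\ldots,D-k$, with Jacobian $v^{D-k}$, under which $x_{k+j}=vu_j$ and $x_{D+1}=v\left(1-\sum_{j=1}^{D-k}u_j\right)$. The delicate point is the bookkeeping of the powers of $v$: the Dirichlet numerator contributes $v^{\widetilde{\alpha}-(D-k+1)}$, the marginal denominator contributes $v^{1-\widetilde{\alpha}}$, and the Jacobian contributes $v^{D-k}$; these exponents sum to zero, so $v$ disappears and the Dirichlet part collapses exactly to $\mbox{\normalfont{Dir}}^{\,D-k}(\underline{u};\alpha_{k+1},\ldots,\alpha_{D+1})$. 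Simultaneously, each surviving argument $\frac{\lambda_i}{4}x_i$ becomes $\frac{\lambda_i v}{4}$ times the corresponding normalized coordinate $u_j$, so that every ${}_0F_1$ factor acquires the rescaled non-centrality $\lambda_i v=\lambda_i(1-x_1^+)$ and the denominator becomes ${}_0F_1(\widetilde{\alpha};\frac{\widetilde{\lambda}v}{4})$. Matching term by term with Eq.~(\ref{eq:cncdir.perturb.dens}) for the parameter vectors $\underline{\alpha}_2=(\alpha_{k+1},\ldots,\alpha_{D+1})$ and $\underline{\lambda}_2(1-x_1^+)$ identifies the transformed density as $\mbox{\normalfont{CNcDir}}^{\,D-k}(\underline{\alpha}_2,\underline{\lambda}_2(1-x_1^+))$, which is Eq.~(\ref{eq:cncdir.normal.condit}). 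The main obstacle is precisely this power-of-$v$ accounting, coupled with checking that the scaling factor $v$ is absorbed consistently into all $D-k+1$ non-centrality parameters.
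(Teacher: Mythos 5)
Your proposal is correct and follows essentially the same route as the paper's own proof: both obtain the marginal of $\underline{X}''_1$ from the aggregation property (Proposition~\ref{propo:cncdir.aggreg.prop}) with the partition $\{1\},\ldots,\{k\},\{k+1,\ldots,D+1\}$, form the conditional density as the ratio of joint to marginal (with the common factor $_0F_1\left(\alpha^+;\lambda^+/4\right)$ cancelling), apply the change of variables $\underline{u}=\underline{x}_2/\left(1-x_1^+\right)$ with Jacobian $\left(1-x_1^+\right)^{D-k}$, and identify the result with the perturbation form in Eq.~(\ref{eq:cncdir.perturb.dens}). Your explicit power-of-$v$ accounting, which the paper leaves implicit in its displayed intermediate expression, checks out: $\left[\widetilde{\alpha}-(D-k+1)\right]+\left[1-\widetilde{\alpha}\right]+\left[D-k\right]=0$.
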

\begin{proof}
By setting $m=k$ and taking $A_j=\left\{j\right\}$ for every $j=1,\, \ldots \, ,k$,  $A_{k+1}=\left\{k+1,\, \ldots \, ,\right.$ $\left. D+1\right\}$ in Eq.~(\ref{eq:cncdir.aggreg.prop}), one has $\underline{X}''_1 \sim \mbox{\normalfont{CNcDir}}^{\, k}\left(\underline{\alpha}_1,\alpha_2^+,\underline{\lambda}_1,\lambda_2^+\right)$, where $\underline{\alpha}_1=\left(\alpha_1,\, \ldots \, ,\alpha_k\right)$, $\underline{\lambda}_1=\left(\lambda_1,\, \ldots \, ,\lambda_k\right)$, $\alpha_2^+=\sum_{i=k+1}^{D+1}\alpha_i$ and $\lambda_2^+=\sum_{i=k+1}^{D+1} \lambda_i$. For every $\underline{x}_2=\left(x_{k+1},\, \ldots \, ,\right.$ $\left.x_D\right)$, the conditional density of $\underline{X}''_2$ given $\underline{X}''_1=\underline{x}_1$ admits the following expression:
\begin{eqnarray*}
\lefteqn{f_{\underline{X}''_2 \, | \, \underline{X}''_1= \, \underline{x}_1}\left(\underline{x}_2;\underline{\alpha}_2,\underline{\lambda}_2\right) \quad =} \\
& = & \frac{\Gamma\left(\alpha_2^+\right)}{\prod_{i=k+1}^{D+1}\Gamma\left(\alpha_i\right)} \left[\prod_{i=k+1}^{D} \left(\frac{x_i}{1-x_1^+}\right)^{\alpha_i-1} \right] \left(1-\frac{x_2^+}{1-x_1^+}\right)^{\alpha_{D+1}-1} \\
& \cdot & \frac{\left[\prod_{i=k+1}^{D} \, _0F_1\left(\alpha_i;\frac{\lambda_i}{4}x_i\right) \right] \cdot \,  _0F_1\left[\alpha_{D+1};\frac{\lambda_{D+1}}{4}\left(1-x^+\right)\right]}{\left(1-x_1^+\right)^{D-k} \, _0F_1\left[\alpha_2^+;\frac{\lambda_2^+}{4}\left(1-x_1^+\right)\right]} \; .
\end{eqnarray*}
Now consider the $(D-k)$-dimensional random vector $\underline{Y}$ defined by the following linear transformation:
$$\underline{Y}=h\left(\underline{X}''_2\right)=\frac{\underline{X}''_2}{1-x_1^+} \quad \Leftrightarrow \quad \left\{\begin{array}{l} \underline{X}''_2=h^{-1}\left(\underline{Y}\right)=\left(1-x_1^+\right) \, \underline{Y} \\ \\ |J|=\mbox{detJac} \, h^{-1}\left(\underline{Y}\right)=(1-x_1^+)^{D-k} \end{array} \right. \, ;$$
note that the conditional density of this latter given $\underline{X}''_1=\underline{x}_1$ takes on the form of
\begin{eqnarray}
\lefteqn{f_{\underline{Y}\, |\, \underline{X}''_1= \, \underline{x}_1}\left(\underline{y};\underline{\alpha}_2,\underline{\lambda}_2\left(1-x_1^+\right)\right)=}\nonumber \\
& = &  f_{\underline{X}''_2 \, | \, \underline{X}''_1=\, \underline{x}_1}\left(\left(1-x_1^+\right)\underline{y};\underline{\alpha}_2,\underline{\lambda}_2\right) \cdot |J|=\quad \mbox{\normalfont{Dir}}^{\, D-k}\left(\underline{y};\underline{\alpha}_2\right) \cdot \nonumber \\
& \cdot & \frac{\prod_{i=k+1}^{D} \, _0F_1\left[\alpha_i;\frac{\lambda_i}{4}\left(1-x_1^+\right)y_i\right] \cdot \, _0F_1\left[\alpha_{D+1};\frac{\lambda_{D+1}}{4}\left(1-x_1^+\right)\left(1-y^+\right)\right]}{_0F_1\left[\alpha_2^+;\frac{\lambda_2^+}{4}\left(1-x_1^+\right)\right]} \, , \quad
\label{eq:cncdir.condit.dens.dim1}
\end{eqnarray}
where $y^+=\sum_{i=k+1}^{D}y_i$. Clearly, Eq.~(\ref{eq:cncdir.condit.dens.dim1}) corresponds to the density of the distribution in Eq.~(\ref{eq:cncdir.normal.condit}).
\end{proof}
% ***************************************************************************************************************************************************
% ***************************************************************************************************************************************************

% ***************************************************************************************************************************************************
% ************************************************************ 3.2 MIXING DISTRIBUTION **************************************************************
% ***************************************************************************************************************************************************

\subsection{Mixing distribution}
\label{subsec:cncdir.mix.distr}

The $(D+1)$-variate Mixture Weight distribution specified in Eq.~(\ref{eq:multidim.peso.prob}) plays the role of mixing distribution in the mixture type form of the CNcDir$^{\, D}$ density. Such model represents the generalization to $D+1$ dimensions $(D>1)$ of the $\mbox{\normalfont{MW}}^{\, 2}$, this latter being the mixing distribution in the mixture type form of the CDNcB density. Hence, the properties of the $\mbox{\normalfont{MW}}^{\, D+1}$ distribution illustrated in the present subsection are extensions of the ones of the bivariate case, an in-depth study of which is provided in \cite{Ors21}.

First of all, the $\mbox{\normalfont{MW}}^{\, D+1}$ distribution is closed under permutation and the non-centrality parameters of the permuted random vector are obtained by applying the same permutation to the original ones.
% ***************************************************************************************************************************************************
% *************************************************** PROPOSITION 3.6: CLOSURE UNDER PERMUTATION ****************************************************
% ***************************************************************************************************************************************************
\begin{proposition}[Closure under permutation]
\label{propo:mix.distr.permut}
Let $\underline{N}=\left(N_1,\, \ldots \, ,N_{D+1}\right) \sim \mbox{\normalfont{MW}}^{\, D+1}\left(\alpha^+,\underline{\lambda}\right)$ where $\underline{\lambda}=\left(\lambda_1,\, \ldots \, ,\lambda_{D+1}\right)$ and $(\underline{k})=\left(k_{(1)},\, \ldots \, ,k_{(D+1)}\right)$ be a permutation of $\left\{1,\, \ldots \, ,D+1\right\}$. Then, $\underline{N}_{(\underline{k})}=\left(N_{k_{(1)}}, \, \ldots \, , \right.$  $\left. N_{k_{(D+1)}} \right) \sim \mbox{\normalfont{MW}}^{\, D+1}(\alpha^+,\underline{\lambda}_{(\underline{k})})$ where $\underline{\lambda}_{(\underline{k})}=(\lambda_{k_{(1)}},\, \ldots \, ,\lambda_{k_{(D+1)}})$.
\end{proposition}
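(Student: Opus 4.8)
The plan is to verify the claim directly at the level of the joint probability mass function, using the closed form in Eq.~(\ref{eq:multidim.peso.prob}). Since closure under permutation is here ultimately a combinatorial bookkeeping statement, the cleanest route is to compute the pmf of the permuted vector $\underline{N}_{(\underline{k})}$ and compare it, term by term, with the candidate pmf of $\mbox{\normalfont{MW}}^{\, D+1}(\alpha^+,\underline{\lambda}_{(\underline{k})})$.

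First I would fix an arbitrary $\underline{j}=(j_1,\ldots,j_{D+1}) \in \mathbb{N}_0^{\, D+1}$ and rewrite the event $\{\underline{N}_{(\underline{k})}=\underline{j}\}$ as $\{\underline{N}=\underline{j}^\ast\}$, where $\underline{j}^\ast$ is the vector whose $k_{(l)}$-th entry equals $j_l$, i.e.\ $j^\ast_{k_{(l)}}=j_l$ for every $l$. I would then apply Eq.~(\ref{eq:multidim.peso.prob}) to $\Pr(\underline{N}=\underline{j}^\ast)$ and simplify each factor. The three quantities $\alpha^+$, $\lambda^+=\sum_i\lambda_i$ and $(j^\ast)^+=\sum_i j^\ast_i$ are all sums over the full index set and are therefore invariant under the relabeling; in particular $(j^\ast)^+=j^+$ and $\lambda^+=\lambda_{(\underline{k})}^+$, so both the normalizing factor ${}_0F_1(\alpha^+;\lambda^+/4)^{-1}$ and the factor $(\alpha^+)_{j^+}^{-1}$ are left unchanged.

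The only factor that genuinely moves is the product $\prod_{i=1}^{D+1}(\lambda_i/4)^{j^\ast_i}/j^\ast_i!$. Reindexing through the bijection $i=k_{(l)}$ and using $j^\ast_{k_{(l)}}=j_l$ turns this into $\prod_{l=1}^{D+1}(\lambda_{k_{(l)}}/4)^{j_l}/j_l!$, which is precisely the product appearing in the pmf of $\mbox{\normalfont{MW}}^{\, D+1}(\alpha^+,\underline{\lambda}_{(\underline{k})})$, since the $l$-th component of $\underline{\lambda}_{(\underline{k})}$ is $\lambda_{k_{(l)}}$. Collecting the three factors then shows that $\Pr(\underline{N}_{(\underline{k})}=\underline{j})$ coincides with the $\mbox{\normalfont{MW}}^{\, D+1}(\alpha^+,\underline{\lambda}_{(\underline{k})})$ pmf evaluated at $\underline{j}$, which is exactly the assertion.

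The main (and essentially only) obstacle is keeping the index bookkeeping straight, in particular distinguishing the permutation $\underline{k}$ acting on the positions of $\underline{N}$ from its inverse acting on the argument $\underline{j}$; no analytic difficulty arises. It is worth noting that the statement is slightly stronger than one might first expect, because the $\mbox{\normalfont{MW}}^{\, D+1}$ pmf depends on the shape parameters only through their sum $\alpha^+$, which is why the permuted distribution retains the same first argument $\alpha^+$ while only $\underline{\lambda}$ gets permuted. Alternatively, one could mirror the one-line argument of Proposition~\ref{propo:cncdir.permut} and apply the permutation $\underline{k}$ simultaneously to the pairs $(M_i,Y'_i)$ in Definition~\ref{def:multivar.mw}, observing that the conditioning event $Y'^+=1$ is a symmetric function of the $Y'_i$ and hence invariant; I would keep the pmf computation as the primary proof and retain this structural observation only as a consistency check.
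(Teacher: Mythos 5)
Your proof is correct, but it takes a genuinely different route from the paper's. The paper disposes of this proposition in one line: it applies the permutation $(\underline{k})$ to the $Y'_i$'s in Definition~\ref{def:multivar.mw} (hence, implicitly, to the coupled pairs $(M_i,Y'_i)$) and uses the fact that the conditioning event $Y'^+=1$ is a symmetric function of the $Y'_i$'s --- exactly the structural argument you relegate to a closing consistency check. Your primary argument instead verifies the claim at the level of the joint probability mass function in Eq.~(\ref{eq:multidim.peso.prob}): you identify $\{\underline{N}_{(\underline{k})}=\underline{j}\}$ with $\{\underline{N}=\underline{j}^{\ast}\}$ where $j^{\ast}_{k_{(l)}}=j_l$, note that $\alpha^+$, $\lambda^+$ and $j^+$ are invariant under relabeling, and reindex the product $\prod_{i}\left(\lambda_i/4\right)^{j^{\ast}_i}/\,j^{\ast}_i!$ through the bijection $i=k_{(l)}$; the bookkeeping, including the distinction between permuting positions and permuting the argument, is handled correctly. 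As for what each approach buys: the paper's proof is shorter and works directly from the stochastic representation, but it tacitly relies on the exchangeable structure of the definition and on the symmetry of the conditioning event; your computation is self-contained given the closed-form pmf and makes explicit why only $\underline{\lambda}$ is permuted while the scalar parameter $\alpha^+$ stays fixed (the pmf depends on the shape parameters only through their sum), a point the paper's one-liner leaves implicit. Both are valid; yours trades brevity for explicitness.
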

\begin{proof}
The proof directly follows from the definition of the model at study in Eq.~(\ref{eq:multivar.mw}) by applying the permutation $(\underline{k})$ to the $Y'_i$'s.
\end{proof}
% ***************************************************************************************************************************************************
% ***************************************************************************************************************************************************
\noindent Proposition~\ref{propo:mix.distr.permut} fully expresses the symmetric nature of the $\mbox{\normalfont{MW}}^{\, D+1}$ distribution, implying that the particular choice of its components does not affect any of the following results.

The distribution under consideration is not closed under marginalization, but it is closed under conditioning.

% ***************************************************************************************************************************************************
% **************************************************** PROPOSITION 3.7: MARGINALS AND CONDITIONALS **************************************************
% ***************************************************************************************************************************************************
\begin{proposition}[Marginals and conditionals]
\label{propo:mix.distr.marg.cond}
Let $\underline{N}=\left(N_1,\, \ldots \, ,N_{D+1}\right) \sim \mbox{\normalfont{MW}}^{\, D+1}\left(\alpha^+,\underline{\lambda}\right)$. Then, for every $m=1,\, \ldots \, ,D$, the marginal probability mass function of $\left(N_1,\, \ldots \, ,N_m\right)$ is
\begin{eqnarray*}
\lefteqn{\Pr\left( \, \left(N_1,\, \ldots \, ,N_m\right)=\left(j_1,\, \ldots \, ,j_m\right) \, \right) \quad =} \\
& = & \frac{1}{\left(\alpha^+\right)_{j^+}} \left[\prod_{i=1}^{m}\frac{\left(\frac{\lambda_{j_i}}{4}\right)^{j_i}}{j_i \, !} \right] \, \frac{_0F_1\left(\alpha^++j^+; \frac{\lambda^+-\sum_{i=1}^{m}\lambda_{j_i}}{4}\right)}{_0F_1\left(\alpha^+; \frac{\lambda^+}{4}\right)} \, , \quad \left\{\begin{array}{l}  \left(j_1,\, \ldots \, ,j_m\right) \in \mathbb{N}_0^{\, m} \\ \\  j^+=\sum_{i=1}^{m}j_i \end{array}\right.
%\label{eq:multidim.peso.marg}
\end{eqnarray*}
and the conditional distribution of $\left(N_1,\, \ldots \, ,N_m\right)$ given $\left(N_{m+1},\, \ldots \, ,N_{D+1}\right)$ is
$$
\left. \left(N_1,\, \ldots \, ,N_m\right) \, \right | \, \left(N_{m+1}, \, \ldots \, ,N_{D+1}\right) \; \sim \; \mbox{\normalfont{MW}}^{\, m}\left(\alpha^++\sum_{i=m+1}^{D+1}N_i \, ,\lambda_1,\, \ldots \, ,\lambda_m\right) \, .$$
%\label{eq:multidim.peso.cond}
\end{proposition}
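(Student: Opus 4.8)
The plan is to establish the marginal law first by direct summation of the joint probability mass function in Eq.~(\ref{eq:multidim.peso.prob}), and then to read off the conditional law as the ratio of the joint to the appropriate marginal.

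For the marginal of $(N_1,\ldots,N_m)$, I would begin from Eq.~(\ref{eq:multidim.peso.prob}) and sum over the trailing coordinates $j_{m+1},\ldots,j_{D+1}$. Writing $j^+=s+t$, where $s=\sum_{i=1}^m j_i$ is held fixed and $t=\sum_{i=m+1}^{D+1} j_i$ indexes the summation, the factors depending only on $j_1,\ldots,j_m$ pull out of the sum. The key manipulation is to split the Pochhammer symbol via Eq.~(\ref{eq:poch.symb.sum}) as $(\alpha^+)_{s+t}=(\alpha^+)_s\,(\alpha^++s)_t$, which isolates the summation total $t$ in the denominator. Grouping the remaining multiple sum by the value of $t$ and applying the multinomial theorem to $\prod_{i=m+1}^{D+1}(\lambda_i/4)^{j_i}/j_i!$ collapses the $(D+1-m)$-fold sum into a single series in $t$ with general term proportional to $\bigl(\lambda^+-\sum_{i=1}^m\lambda_i\bigr)^t/[\,t!\,(\alpha^++s)_t\,]$. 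By the definition in Eq.~(\ref{eq:f01}) this series is exactly $_0F_1\!\left(\alpha^++s;\tfrac{1}{4}(\lambda^+-\sum_{i=1}^m\lambda_i)\right)$, which yields the stated marginal.

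For the conditional law, I would express $\Pr\!\left((N_1,\ldots,N_m)=\cdot\mid(N_{m+1},\ldots,N_{D+1})=\cdot\right)$ as the quotient of the joint probability mass function of Eq.~(\ref{eq:multidim.peso.prob}) over the marginal of $(N_{m+1},\ldots,N_{D+1})$. This latter marginal is supplied by the formula just proved, invoking the closure under permutation of Proposition~\ref{propo:mix.distr.permut} to relabel the coordinates. In the quotient the normalizing factor $_0F_1(\alpha^+;\lambda^+/4)$ and the entire product over $i=m+1,\ldots,D+1$ cancel; setting $c=\sum_{i=m+1}^{D+1} N_i$ and splitting $(\alpha^+)_{s+c}=(\alpha^+)_c\,(\alpha^++c)_s$ once more via Eq.~(\ref{eq:poch.symb.sum}) leaves precisely $\frac{1}{(\alpha^++c)_s}\prod_{i=1}^m(\lambda_i/4)^{j_i}/j_i!$ divided by $_0F_1\!\left(\alpha^++c;\tfrac{1}{4}\sum_{i=1}^m\lambda_i\right)$. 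Comparing this with Eq.~(\ref{eq:multidim.peso.prob}) read in dimension $m$, it is the probability mass function of $\mbox{MW}^{m}(\alpha^++c,\lambda_1,\ldots,\lambda_m)$, as claimed.

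I expect the only genuinely nontrivial step to be the collapse of the $(D+1-m)$-fold sum into a single $_0F_1$; the additive Pochhammer identity of Eq.~(\ref{eq:poch.symb.sum}) together with the multinomial theorem do all the work there, and once the marginal is in hand the conditional follows by a routine cancellation.
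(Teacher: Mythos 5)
Your proof is correct. The paper itself omits the argument (its proof of Proposition~\ref{propo:mix.distr.marg.cond} reads only ``The proof is straightforward and is omitted''), and what you supply is exactly the computation that omission presupposes: sum out the trailing coordinates, split $(\alpha^+)_{s+t}=(\alpha^+)_s(\alpha^++s)_t$ via Eq.~(\ref{eq:poch.symb.sum}), collapse the $(D+1-m)$-fold sum grouped by $t$ into $\bigl(\tfrac{1}{4}(\lambda^+-\sum_{i=1}^m\lambda_i)\bigr)^t/t!$ by the multinomial theorem, and recognize the resulting series as $_0F_1$ from Eq.~(\ref{eq:f01}); the conditional then falls out of the ratio with one more Pochhammer split. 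This is also the same technique the paper does spell out for Proposition~\ref{propo:mix.distr.somma}, so your argument is stylistically consistent with the author's. One incidental payoff of your derivation: it confirms that the subscripts $\lambda_{j_i}$ appearing in the stated marginal are typographical slips for $\lambda_i$, since the summation manifestly produces $\prod_{i=1}^{m}(\lambda_i/4)^{j_i}/j_i!$ and the argument $\tfrac{1}{4}\bigl(\lambda^+-\sum_{i=1}^m\lambda_i\bigr)$. (For completeness you could note that regrouping the multiple nonnegative series by the value of $t$ is justified by Tonelli, but this is a formality.)
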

\begin{proof}
The proof is straightforward and is omitted.
\end{proof}
% ***************************************************************************************************************************************************
% ***************************************************************************************************************************************************

The sum of the components of $\underline{N} \sim \mbox{\normalfont{MW}}^{\, D+1}\left(\alpha^+,\underline{\lambda}\right)$ belongs to the univariate Mixture Weight family.

% ***************************************************************************************************************************************************
% *************************************************** PROPOSITION 3.8: SUM OF THE COMPONENTS ********************************************************
% ***************************************************************************************************************************************************
\begin{proposition}[Sum of the components]
\label{propo:mix.distr.somma}
Let $\underline{N}=\left(N_1,\, \ldots \, ,N_{D+1}\right) \sim \mbox{\normalfont{MW}}^{\, D+1}\left(\alpha^+,\underline{\lambda}\right)$. Then:
\begin{equation}
N^+=\sum_{i=1}^{D+1}N_i \; \sim \; \mbox{\normalfont{MW}}^{\, 1}\left(\alpha^+,\lambda^+\right) \, .
\label{eq:multidim.peso.somma}
\end{equation}
\end{proposition}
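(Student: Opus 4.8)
The plan is to establish the claim by a direct computation of the probability mass function of $N^+$, aggregating the joint law in Eq.~(\ref{eq:multidim.peso.prob}) over all index vectors with a prescribed sum. First I would record the target: specializing Eq.~(\ref{eq:multidim.peso.prob}) to a single component shows that $\mbox{\normalfont{MW}}^{\, 1}\left(\alpha^+,\lambda^+\right)$ has mass function $\Pr(N=n)=\left[{}_0F_1\!\left(\alpha^+;\frac{\lambda^+}{4}\right)\right]^{-1}\,(\alpha^+)_n^{-1}\,\frac{(\lambda^+/4)^n}{n!}$ for $n\in\mathbb{N}_0$, and this is what must be reproduced.

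Next I would write $\Pr(N^+=n)=\sum_{\underline{j}\,:\,j^+=n}\Pr(\underline{N}=\underline{j})$. Since the normalizing factor ${}_0F_1(\alpha^+;\lambda^+/4)^{-1}$ and the Pochhammer factor $(\alpha^+)_{j^+}^{-1}=(\alpha^+)_n^{-1}$ depend on $\underline{j}$ only through the fixed value $j^+=n$, both pull outside the sum. The crux is then to evaluate the inner sum $\sum_{j_1+\cdots+j_{D+1}=n}\prod_{i=1}^{D+1}\frac{(\lambda_i/4)^{j_i}}{j_i!}$, which collapses by the multinomial theorem to $\frac{1}{n!}\left(\sum_{i=1}^{D+1}\frac{\lambda_i}{4}\right)^{n}=\frac{1}{n!}\left(\frac{\lambda^+}{4}\right)^{n}$. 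Substituting back yields exactly the $\mbox{\normalfont{MW}}^{\, 1}$ mass function recorded above, completing the identification.

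I would also note the slicker conceptual route, consistent with the definition-based proofs given earlier. By Eq.~(\ref{eq:multivar.mw}), $N^+=M^+\,|\,Y'^+=1$ where $M^+=\sum_{i}M_i$ and $Y'^+=\sum_i Y'_i$. Since the $M_i$ are independent Poissons, $M^+\sim\mbox{\normalfont{Poisson}}(\lambda^+/2)$, and by the reproductive property of the Non-central Chi-Squared in Eq.~(\ref{eq:ncchisq.reprod}) we have $Y'^+\sim\chi'^{\,2}_{2\alpha^+}(\lambda^+)$. Thus the pair $(M^+,Y'^+)$ has precisely the joint structure of a single Non-central Chi-Squared together with its Poisson mixing count, so conditioning on $Y'^+=1$ returns a univariate Mixture Weight variate with parameters $\alpha^+$ and $\lambda^+$, again by Eq.~(\ref{eq:multivar.mw}) specialized to one component.

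There is essentially no serious obstacle here: the computation hinges only on the multinomial expansion, and the conceptual argument hinges only on the additivity of independent Poisson counts together with the already-established reproductivity in Eq.~(\ref{eq:ncchisq.reprod}). The one point that deserves a line of justification is that the degrees-of-freedom and non-centrality parameters genuinely aggregate under the conditioning on $Y'^+=1$ in the asserted way, which is exactly what Eq.~(\ref{eq:ncchisq.reprod}) guarantees.
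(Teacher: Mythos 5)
Your main computation is correct and is essentially the paper's own proof: the paper likewise writes $\Pr\left(N^+=s\right)$ as the sum of the joint probabilities in Eq.~(\ref{eq:multidim.peso.prob}) over all configurations with total $s$, pulls the common factor $\left[\left(\alpha^+\right)_s \, {}_0F_1\left(\alpha^+;\frac{\lambda^+}{4}\right)\right]^{-1}$ outside, and collapses the remaining sum to $\left(\frac{\lambda^+}{4}\right)^s / \, s!$. The only difference is cosmetic: the paper performs this collapse by inserting $e^{\lambda^+/4}$ and recognizing the probability mass function of a sum of independent Poisson random variables, whereas you invoke the multinomial theorem directly --- the same identity, stated more plainly, and arguably cleaner.

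Your alternative, definition-based route is genuinely different from the paper's proof and is also sound: it trades all computation for the structural observation that $\left(M^+,Y'^+\right)$ is itself a (Poisson count, Non-central Chi-Squared) pair with parameters $\alpha^+$ and $\lambda^+$, so that conditioning on $Y'^+=1$ lands in $\mbox{\normalfont{MW}}^{\,1}\left(\alpha^+,\lambda^+\right)$ by Eq.~(\ref{eq:multivar.mw}) specialized to one component --- the same spirit as the definition-based proofs of the permutation and aggregation propositions. One caveat: citing Eq.~(\ref{eq:ncchisq.reprod}) only gives you the \emph{marginal} law of $Y'^+$, and marginals of $M^+$ and $Y'^+$ alone do not determine the joint structure. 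What the argument actually needs is the conditional statement $Y'^+ \, | \, \underline{M} \sim \chi^{2}_{2\alpha^+ + 2M^+}$ (additivity of central Chi-Squareds given the counts), which depends on $\underline{M}$ only through $M^+$ and hence yields $Y'^+ \, | \, M^+ \sim \chi^{2}_{2\alpha^+ + 2M^+}$; that one line, rather than the reproductive property itself, is what makes the ``joint structure'' claim rigorous.
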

\begin{proof}
By employing the probability mass function of the sum of the components of the $\mbox{\normalfont{Multi-Poisson}}^{\, D+1}$ distribution in Eq.~(\ref{eq:multipois.def}), for every $s \in \mathbb{N}_0$ the probabilities of $N^+$ take on the following form:
\begin{eqnarray*}
\lefteqn{\Pr\left(N^+=s\right)=}\\
& = & \sum_{\sum_{i=1}^{D}s_i \leq s} \Pr\left(N_1=s_1,\, \ldots \, ,N_D=s_D,N_{D+1}=s-\sum_{i=1}^{D}s_i\right)=\\
& = & \frac{1}{\left(\alpha^+\right)_{s} \, _0F_1\left(\alpha^+; \frac{\lambda^+}{4}\right)} \sum_{\sum_{i=1}^{D}s_i \leq s}   \left\{\left[\prod_{i=1}^{D}\frac{\left(\frac{\lambda_{i}}{4}\right)^{s_i}}{s_i \, !}\right] \, \frac{\left(\frac{\lambda_{D+1}}{4}\right)^{s-\sum_{i=1}^{D}s_i}}{\left(s-\sum_{i=1}^{D}s_i\right) !}\right\}=\\
& = & \frac{e^{\frac{\lambda^+}{4}}}{\left(\alpha^+\right)_{s} \, _0F_1\left(\alpha^+; \frac{\lambda^+}{4}\right)} \sum_{\sum_{i=1}^{D}s_i \leq s}   \left\{\left[\prod_{i=1}^{D} e^{-\frac{\lambda_{i}}{4}} \, \frac{\left(\frac{\lambda_{i}}{4}\right)^{s_i}}{s_i \, !}\right]  \frac{e^{-\frac{\lambda_{D+1}}{4}}\left(\frac{\lambda_{D+1}}{4}\right)^{s-\sum_{i=1}^{D}s_i}}{\left(s-\sum_{i=1}^{D}s_i\right) !}\right\}\\
& = & \frac{1}{_0F_1\left(\alpha^+; \frac{\lambda^+}{4}\right)} \frac{1}{\left(\alpha^+\right)_{s}} \frac{\left(\frac{\lambda^+}{4}\right)^s}{s \, !}  \qquad s \in \mathbb{N}_0 \, ,
\end{eqnarray*}
which corresponds to the special case of Eq.~(\ref{eq:multidim.peso.prob}) where $D+1$ is set equal to 1.
\end{proof}
% ***************************************************************************************************************************************************
% ***************************************************************************************************************************************************

Despite the differences existing between their joint probability mass functions, the $\mbox{\normalfont{MW}}^{\, D+1}$ and the $\mbox{\normalfont{Multi-Poisson}}^{\, D+1}$ share the same conditional distribution given the sum of their components.

% ***************************************************************************************************************************************************
% ********************************** PROPOSITION 3.9: CONDITIONAL DISTRIBUTION GIVEN THE SUM OF THE COMPONENTS *************************************
% ***************************************************************************************************************************************************
\begin{proposition}[Conditional distribution given the sum of the components]
\label{propo:mix.distr.cond.sum}
Let $\underline{N}=\left(N_1,\, \ldots \, ,N_{D+1}\right) \sim \mbox{\normalfont{MW}}^{\, D+1}\left(\alpha^+,\underline{\lambda}\right)$ where $\underline{\lambda}=\left(\lambda_1, \, \ldots \, , \lambda_{D+1}\right)$ and $N^+=\sum_{i=1}^{D+1}N_i$. Then:
\begin{equation}
\left. \left(N_1,\, \ldots \, ,N_{D}\right) \, \right| \, N^+ \sim \mbox{\normalfont{Multinomial}}^{\, D}\left(N^+,\frac{\lambda_1}{\lambda^+},\, \ldots \, ,\frac{\lambda_D}{\lambda^+}\right) \, .
\label{eq:multidim.peso.cond.sum}
\end{equation}
\end{proposition}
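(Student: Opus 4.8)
The plan is to compute the conditional probability mass function directly as the ratio of the joint law of $\underline{N}$ to the marginal law of $N^+$, and to recognise the resulting expression as multinomial. First I would fix $s \in \mathbb{N}_0$ together with a vector $(j_1,\, \ldots \, ,j_D) \in \mathbb{N}_0^{\, D}$ satisfying $j^{(D)} := \sum_{i=1}^{D} j_i \leq s$, and observe that conditioning on $N^+=s$ forces $N_{D+1}=s-j^{(D)}$. Hence the event $\{(N_1,\, \ldots \, ,N_D)=(j_1,\, \ldots \, ,j_D),\, N^+=s\}$ coincides with $\{\underline{N}=(j_1,\, \ldots \, ,j_D,s-j^{(D)})\}$, whose probability is furnished by Eq.~(\ref{eq:multidim.peso.prob}).

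The key observation is that along this conditioning set the total $j^+=\sum_{i=1}^{D+1}j_i$ equals $s$, so the factor $(\alpha^+)_{j^+}$ appearing in the numerator reduces to $(\alpha^+)_s$, which is exactly the factor present in the marginal law of $N^+$ supplied by Proposition~\ref{propo:mix.distr.somma}. Dividing the joint probability by $\Pr(N^+=s)$ therefore cancels both the normalising constant $1/\,_0F_1(\alpha^+;\lambda^+/4)$ and the Pochhammer factor $1/(\alpha^+)_s$, leaving only a ratio of elementary terms to simplify.

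What remains is routine bookkeeping. The powers of $1/4$ collected from the $D+1$ factors in the numerator combine to $(1/4)^s$, which cancels against the identical power arising from $(\lambda^+/4)^s$ in the denominator; the surviving factorials reassemble into the multinomial coefficient $\binom{s}{j_1,\, \ldots \, ,j_D,\,s-j^{(D)}}$, while the surviving powers of the $\lambda_i$ divided by $(\lambda^+)^s$ yield precisely $\left(\frac{\lambda_1}{\lambda^+}\right)^{j_1}\cdots\left(\frac{\lambda_D}{\lambda^+}\right)^{j_D}\left(\frac{\lambda_{D+1}}{\lambda^+}\right)^{s-j^{(D)}}$. This is exactly the $\mbox{\normalfont{Multinomial}}^{\, D}(s,\lambda_1/\lambda^+,\, \ldots \, ,\lambda_D/\lambda^+)$ mass evaluated at $(j_1,\, \ldots \, ,j_D)$, establishing Eq.~(\ref{eq:multidim.peso.cond.sum}).

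There is no genuine obstacle in this argument; the only point deserving a moment's care is the identity $j^+=s$ on the conditioning set, since it is precisely this that makes the $\alpha^+$-dependent factors disappear and explains why the dependence on $\alpha^+$ drops out of the conditional law entirely. This cancellation is the structural reason why the $\mbox{\normalfont{MW}}^{\, D+1}$ and the $\mbox{\normalfont{Multi-Poisson}}^{\, D+1}$ families share the same conditional distribution given the sum of their components, despite their differing joint probability mass functions.
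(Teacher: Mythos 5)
Your proof is correct and follows essentially the same route as the paper: both compute the conditional mass function as the ratio of the joint probability from Eq.~(\ref{eq:multidim.peso.prob}) (with $N_{D+1}$ forced to $s-j^{(D)}$, so $j^+=s$) to the marginal law of $N^+$ from Proposition~\ref{propo:mix.distr.somma}, after which the $_0F_1$ and Pochhammer factors cancel and the remaining terms assemble into the multinomial mass function. Your closing remark on why the $\alpha^+$-dependence drops out is a nice observation, but the argument itself is the paper's.
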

\begin{proof}
Let $\left(j_1,\, \ldots \, ,j_D\right) \in \mathbb{N}_0^{\, D}$, $s \in \mathbb{N}_0$ and $\sum_{i=1}^{D}j_i=j^+ \leq s$; then, the proof ensues from Eqs.~(\ref{eq:multidim.peso.prob}) and~(\ref{eq:multidim.peso.somma}) by carrying out the following computations:
\begin{eqnarray*}
\lefteqn{\Pr\left( \, \left.\left(N_1,\, \ldots \, ,N_{D}\right)=\left(j_1,\, \ldots \, ,j_D\right) \, \right | \, N^+=s \right)=}\\
& = & \frac{\Pr\left(N_1=j_1,\, \ldots \, ,N_D=j_D,N_{D+1}=s-j^+\right)}{\Pr\left(N^+=s\right)}=\\
& = & \frac{1}{\left(\alpha^+\right)_{s} \, _0F_1\left(\alpha^+; \frac{\lambda^+}{4}\right)}    \left[\prod_{i=1}^{D}\frac{\left(\frac{\lambda_{i}}{4}\right)^{j_i}}{j_i \, !}\right] \, \frac{\left(\frac{\lambda_{D+1}}{4}\right)^{s-j^+}}{\left(s-j^+\right) !} \left[\frac{1}{_0F_1\left(\alpha^+; \frac{\lambda^+}{4}\right)} \frac{1}{\left(\alpha^+\right)_{s}} \frac{\left(\frac{\lambda^+}{4}\right)^s}{s \, !}\right]^{-1}\\
& = & {s \choose j_1,\, \ldots \, ,j_D} \left[\prod_{i=1}^{D}\left(\frac{\lambda_{i}}{\lambda^+}\right)^{j_i}\right] \left(1-\sum_{i=1}^{D}\frac{\lambda_{i}}{\lambda^+}\right)^{s-j^+} \, ,
\end{eqnarray*}
this latter being the probability mass function of the distribution in Eq.~(\ref{eq:multidim.peso.cond.sum}).
\end{proof}
% ***************************************************************************************************************************************************
% ***************************************************************************************************************************************************

Finally, the $\mbox{\normalfont{MW}}^{\, D+1}$ distribution can be simulated by first generating the random variable $N^+$ according to Eq.~(\ref{eq:multidim.peso.somma}) and then the $D$-variate Multinomial distribution in Eq.~(\ref{eq:multidim.peso.cond.sum}). In particular, the former can be easily simulated by using, for example, the Inverse-Transform method \cite{LawKel00}. Specifically, after generating a realization $u$ from a Uniform random variable on the Real interval $(0,1)$, an exact realization $s \in \mathbb{N}_0$ from the random variable $N^+$ is obtainable by taking the generalized inverse of its distribution function $F$ in $u$, i.e. $F^-\left(u\right)=\inf \{x \, | \, F\left(x\right) \geq u\}$.

% ***************************************************************************************************************************************************
% ************************************************************ 3.3 REPRESENTATIONS **************************************************************
% ***************************************************************************************************************************************************
\subsection{Representations}
\label{subsec:cncdir.repres}

In the present subsection a number of fundamental representations of the Conditional Non-central Dirichlet model are provided, disclosing its remarkable tractability.

The first one is the mixture representation, which summarizes the arguments leading to Definitions~\ref{def:cncdir.def} and~\ref{def:multivar.mw}.

% ***************************************************************************************************************************************************
% ************************************************* PROPOSITION 3.10: MIXTURE REPRESENTATION ********************************************************
% ***************************************************************************************************************************************************
\begin{proposition}[Mixture representation]
\label{propo:cncdir.mixt.repres}
Let $\underline{X}'' \sim$ $\mbox{\normalfont{CNcDir}}^{\, D}\left(\underline{\alpha},\underline{\lambda}\right)$ and $\underline{N} \sim \mbox{\normalfont{MW}}^{\, D+1}\left(\alpha^+,\underline{\lambda}\right)$. Then:
\begin{equation}
\left. \underline{X}'' \, \right| \, \underline{N} \sim \mbox{\normalfont{Dir}}^{\, D}\left(\underline{\alpha}+\underline{N}\right) \; .
\label{eq:cncdir.mixt.repres}
\end{equation}
\end{proposition}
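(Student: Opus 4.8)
The plan is to trace both $\underline{X}''$ and $\underline{N}$ back to the common family of independent Non-central Chi-Squared variables $Y'_1,\, \ldots \, ,Y'_{D+1}$ and the associated Multi-Poisson vector $\underline{M}$, and to recognise that conditioning $\underline{X}''$ on $\underline{N}$ is nothing but conditioning the pre-normalized vector $\underline{X}'=(Y'_1/Y'^{+},\, \ldots \, ,Y'_D/Y'^{+})$ on the pair $(\underline{M},\, Y'^{+}=1)$. First I would note that, by Definitions~\ref{def:cncdir.def} and~\ref{def:multivar.mw}, $\underline{X}''$ is the law of $\underline{X}'$ conditioned on the event $\{Y'^{+}=1\}$ while $\underline{N}$ is the law of $\underline{M}$ conditioned on the same event, both being built on one and the same probability space. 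Hence the joint law of $(\underline{X}'',\underline{N})$ coincides with that of $(\underline{X}',\underline{M})$ under the conditioning $\{Y'^{+}=1\}$, so that for a fixed value $\underline{j}$ of $\underline{N}$
$$\left. \underline{X}'' \, \right| \, \underline{N}=\underline{j} \quad \stackrel{d}{=} \quad \left. \underline{X}' \, \right| \, \left(\underline{M}=\underline{j},\, Y'^{+}=1\right) \, .$$

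The crux is then to discard the conditioning on $\{Y'^{+}=1\}$. Here I would invoke Property~\ref{prope:char.prop.chisq}: by the mixture representation in Eq.~(\ref{eq:mixrepres.ncchisq}), given $\underline{M}=\underline{j}$ each $Y'_i$ is distributed as a $\chi^{\, 2}_{2\alpha_i+2j_i}$, so the $Y'_i$ are conditionally independent Gamma variables sharing the common scale $1/2$; the characterizing property then ensures that $\underline{X}'$ is conditionally independent of $Y'^{+}$ given $\underline{M}$. Therefore the extra conditioning on $\{Y'^{+}=1\}$ is immaterial and $\left. \underline{X}' \, \right| \, \left(\underline{M}=\underline{j},\, Y'^{+}=1\right) \stackrel{d}{=} \left. \underline{X}' \, \right| \, \underline{M}=\underline{j}$. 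Finally, the mixture representation of the standard non-central model in Eq.~(\ref{eq:ncdir.mixt.repres}) yields $\left. \underline{X}' \, \right| \, \underline{M}=\underline{j} \sim \mbox{Dir}^{\, D}\left(\underline{\alpha}+\underline{j}\right)$, which chains with the previous displays to give the assertion.

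The main obstacle is conceptual rather than computational: one must justify carefully that the nested conditionings commute, that is, that conditioning $\underline{X}''$ on $\underline{N}$ (both already obtained by conditioning on $\{Y'^{+}=1\}$) reproduces the single conditional law $\left. \underline{X}' \, \right| \, \left(\underline{M}=\underline{j},\, Y'^{+}=1\right)$. This is precisely where Property~\ref{prope:char.prop.chisq} carries the whole weight, collapsing the two conditionings onto the central Dirichlet law. A more pedestrian alternative would write the joint density of $(\underline{X}'',\underline{N})$ out explicitly and divide by the marginal mass $\Pr(\underline{N}=\underline{j})$ of Eq.~(\ref{eq:multidim.peso.prob}): once the $_0F_1$ normalizing constant cancels, the Dirichlet kernel $\mbox{Dir}^{\, D}\left(\underline{x};\underline{\alpha}+\underline{j}\right)$ is all that survives; I would nonetheless prefer the conditional-independence route, being shorter and more transparent about why the result holds.
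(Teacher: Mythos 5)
Your proof is correct and takes essentially the same route as the paper: the paper omits a formal proof as ``straightforward'' precisely because the proposition merely summarizes the arguments already laid out in Subsection~\ref{subsec:cncdir.def.distr}, namely that $\underline{X}'$ is conditionally independent of $Y'^+$ given $\underline{M}$ by Property~\ref{prope:char.prop.chisq}, so that the conditioning on $\{Y'^{+}=1\}$ can be dropped and Eq.~(\ref{eq:ncdir.mixt.repres}) applies. Your chain of displays makes explicit exactly that reasoning, so there is no gap.
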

\begin{proof}
The proof is straightforward and is omitted.
\end{proof}

\noindent Proposition~\ref{propo:cncdir.mixt.repres} proves significant in several respects. Indeed, the above representation constitutes a valid practical method for generating from a Conditional Non-central Dirichlet random vector. Specifically, the algorithm to obtain a realization from the $\mbox{\normalfont{CNcDir}}^{\, D}$ model using such a representation requires to first generate from the $\mbox{MW}^{\, D+1}$ distribution and then from the conditional distribution in Eq.~(\ref{eq:cncdir.mixt.repres}). In particular, the former operation can be accomplished by following the lines made explicit in the previous subsection. In this regard, an interesting restatement of the combination of the two aforementioned generation steps is provided by the following result, which shows that, conditionally on $N^+$, the density of the model of interest is given by a mixture of Dirichlet densities weighted by the Multinomial probabilities in Eq.~(\ref{eq:multidim.peso.cond.sum}).  

% **************************************************************************************************************************************************
% ********************************************* PROPOSITION 3.11: CONDITIONAL DENSITY GIVEN N^+ ****************************************************
% **************************************************************************************************************************************************
\begin{proposition}[Conditional density given $N^+$]
\label{propo:cncdir.cond.distr.mplus}
Let $\underline{X}'' \sim \mbox{\normalfont{CNcDir}}^{\, D}\left(\underline{\alpha},\underline{\lambda}\right)$, $\underline{N} \sim \mbox{\normalfont{MW}}^{\, D+1}\left(\alpha^+,\underline{\lambda}\right)$ and $N^+=\sum_{i=1}^{D+1}N_i$. Then, the conditional density of $\underline{X}''$ given $N^+$ is
\begin{eqnarray}
\lefteqn{f_{\left.\underline{X}'' \, \right| \, N^+}\left(\underline{x};\underline{\alpha}, \underline{\lambda}\right)=\sum_{j^+ \, \leq \, N^+}\left[\mbox{\normalfont{Multinomial}}^{\, D}\left(j_1,\, \ldots \, ,j_D;\, N^+,\frac{\lambda_1}{\lambda^+},\, \ldots \, ,\frac{\lambda_D}{\lambda^+}\right) \cdot \right.}\nonumber \\
& \cdot & \left. \mbox{\normalfont{Dir}}^{\, D}\left(\underline{x};\alpha_1+j_1,\, \ldots \, ,\alpha_D+j_D,\alpha_{D+1}+N^+-j^+\right)\right] \qquad \left\{\begin{array}{l} \left(j_1,\, \ldots \, ,j_D\right) \in \mathbb{N}_0^{\, D} \\ \\ j^+=\sum_{i=1}^{D}j_i \end{array} \right.
\label{eq:cncdir.cond.distr.nplus}
\end{eqnarray}
\end{proposition}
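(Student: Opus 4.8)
The plan is to read off the conditional density of $\underline{X}''$ given $N^+$ by combining the two structural results already at our disposal: the mixture representation of Proposition~\ref{propo:cncdir.mixt.repres}, which governs the law of $\underline{X}''$ once the full vector $\underline{N}$ is known, and the conditional law of $\underline{N}$ given its total $N^+$ furnished by Proposition~\ref{propo:mix.distr.cond.sum}. The natural device is a tower (law of total probability) argument with respect to the discrete random vector $\underline{N}$, conditioned throughout on the event $\{N^+ = s\}$ for a fixed $s \in \mathbb{N}_0$.

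First I would decompose the target density by summing the conditional densities $f_{\left. \underline{X}'' \, \right| \, \underline{N} = \underline{j}}$ against the conditional weights of the configurations of $\underline{N}$ compatible with $N^+ = s$:
$$f_{\left. \underline{X}'' \, \right| \, N^+ = s}\left(\underline{x}\right) = \sum_{\sum_{i=1}^{D+1} j_i = s} f_{\left. \underline{X}'' \, \right| \, \underline{N} = \underline{j}}\left(\underline{x}\right) \cdot \Pr\left(\left. \underline{N} = \underline{j} \; \right| \; N^+ = s\right).$$
The point that makes this legitimate is that, within the sum, the event $\{\underline{N} = \underline{j}\}$ already entails $\{N^+ = s\}$, so no residual conditioning on $N^+$ survives: since $N^+ = \sum_i N_i$ is a deterministic function of $\underline{N}$, conditioning on the pair $(\underline{N}, N^+)$ is the same as conditioning on $\underline{N}$ alone. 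Proposition~\ref{propo:cncdir.mixt.repres} then identifies each inner factor as $f_{\left. \underline{X}'' \, \right| \, \underline{N} = \underline{j}}\left(\underline{x}\right) = \mbox{\normalfont{Dir}}^{\, D}\left(\underline{x}; \underline{\alpha} + \underline{j}\right)$.

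Next I would reindex the summation by writing $\underline{j} = \left(j_1, \, \ldots \, , j_D, j_{D+1}\right)$ and $j^+ = \sum_{i=1}^{D} j_i$: the constraint $\sum_{i=1}^{D+1} j_i = s$ forces $j_{D+1} = s - j^+$ with $j^+ \leq s$, collapsing the $(D+1)$-fold sum to a $D$-fold sum over $\left(j_1, \, \ldots \, , j_D\right) \in \mathbb{N}_0^{\, D}$ with $j^+ \leq s$. In this parametrization the weight $\Pr\left(\underline{N} = \underline{j} \, | \, N^+ = s\right)$ is precisely the joint probability of $\left(N_1, \, \ldots \, , N_D\right)$ given $N^+ = s$, which Proposition~\ref{propo:mix.distr.cond.sum} evaluates as the $\mbox{\normalfont{Multinomial}}^{\, D}\left(s, \lambda_1 / \lambda^+, \, \ldots \, , \lambda_D / \lambda^+\right)$ mass at $\left(j_1, \, \ldots \, , j_D\right)$, while the Dirichlet factor becomes $\mbox{\normalfont{Dir}}^{\, D}\left(\underline{x}; \alpha_1 + j_1, \, \ldots \, , \alpha_D + j_D, \alpha_{D+1} + s - j^+\right)$. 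Substituting $s = N^+$ reproduces the asserted identity~(\ref{eq:cncdir.cond.distr.nplus}).

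I expect the only genuinely delicate step to be the conditional-independence reduction $f_{\left. \underline{X}'' \, \right| \, \underline{N}, N^+} = f_{\left. \underline{X}'' \, \right| \, \underline{N}}$ invoked above; once the $\sigma$-algebra generated by $(\underline{N}, N^+)$ is recognized to coincide with that generated by $\underline{N}$, everything else is the mechanical reindexing of a double series and two direct appeals to the cited propositions.
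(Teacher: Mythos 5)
Your proof is correct and follows essentially the same route as the paper's: both decompose $f_{\left.\underline{X}''\,\right|\,N^+}$ by the law of total probability over the configurations of $\underline{N}$ compatible with $N^+$, then invoke Proposition~\ref{propo:cncdir.mixt.repres} for the Dirichlet factors and Proposition~\ref{propo:mix.distr.cond.sum} for the Multinomial weights. The only cosmetic difference is that you sum over the full $(D+1)$-vector $\underline{j}$ and then collapse to the $D$-fold sum, whereas the paper indexes directly by $\left(j_1,\,\ldots\,,j_D\right)$ and phrases the key identification as a rearrangement of Eq.~(\ref{eq:cncdir.mixt.repres}) under conditioning on $\left(N_1,\,\ldots\,,N_D,N^+\right)$ --- the same $\sigma$-algebra observation you make explicitly.
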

\begin{proof}
The proof ensues from the statement 
\begin{eqnarray*}
f_{\left.\underline{X}'' \, \right| \, N^+}\left(\underline{x};\underline{\alpha}, \underline{\lambda}\right) & = & \sum_{j^+ \, \leq \, N^+} \left[\, \Pr\left(\left. \, \left(N_1, \, \ldots \, ,N_D\right)=\left(j_1,\, \ldots \, ,j_D\right) \, \right| \, N^+\right) \right. \cdot\\
& \cdot & \left. f_{\left.\underline{X}'' \, \right| \, \left(\, N_1, \, \ldots \, ,N_D, N^+\right)}\left(\underline{x};\underline{\alpha}, \underline{\lambda}, \left(j_1, \, \ldots \, ,j_D\right)\right) \, \right] \, ,
\end{eqnarray*}
where $\left(j_1,\, \ldots \, ,j_D\right) \in \mathbb{N}_0^{\, D}$, $j^+=\sum_{i=1}^{D}j_i$, by using Eq.~(\ref{eq:multidim.peso.cond.sum}) and noting that Eq.~(\ref{eq:cncdir.mixt.repres}) can be rearranged as follows:
$$\left. \underline{X}'' \, \right| \, \left(\, N_1, \, \ldots \, ,N_D,N^+\right) \sim \mbox{\normalfont{Dir}}^{\, D}\left(\alpha_1+N_1,\, \ldots \, ,\alpha_D+N_D,\alpha_{D+1}+N^+-\sum_{i=1}^{D}N_i\right) \; .$$
\end{proof}

Finally, the mixture representation in Eq.~(\ref{eq:cncdir.mixt.repres}) leads to the following representation of the CNcDir model in terms of unconditional composition.

% ***************************************************************************************************************************************************
% **************************************** PROPOSITION 3.12: UNCONDITIONAL COMPOSITION TYPE REPRESENTATION ******************************************
% ***************************************************************************************************************************************************
\begin{proposition}[Unconditional composition type representation]
\label{propo:cncdir.uncond.compos.repres}
Let $\underline{N} \sim \mbox{\normalfont{MW}}^{\, D+1}\left(\alpha^+,\underline{\lambda}\right)$ and $\left. Z'_i \, \right| \, \underline{N} \stackrel{\bot}{\sim } \chi^{\,2}_{2\alpha_i+2N_i}$, $i=1,\ldots,D+1$ with $Z'^+=\sum_{i=1}^{D+1} Z'_i$. Then:
\begin{itemize}
\item[i) ] $\left(\frac{Z'_1}{Z'^+},\, \ldots \, ,\frac{Z'_D}{Z'^+}\right) \sim \mbox{\normalfont{CNcDir}}^{\, D}\left(\underline{\alpha},\underline{\lambda}\right)$ ,
\item[ii) ] $Z'_i=Z_i+\sum_{j=1}^{N_i}F_j$ , $i=1,\, \ldots \, ,D+1$, where $Z_i \sim \chi^{\, 2}_{2\alpha_i}$, $\underline{N}$, $\{F_j \stackrel{\bot}{\sim } \chi^{\, 2}_2 \}$ are mutually independent.
\end{itemize}
\end{proposition}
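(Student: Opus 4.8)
The plan is to treat the two claims separately: I would obtain i) as an immediate consequence of the mixture representation already established in Proposition~\ref{propo:cncdir.mixt.repres} together with the chi-squared characterization of the Dirichlet law in Eq.~(\ref{eq:dir.def}), and I would derive ii) as the analogue, in the present conditional setting, of the additive decomposition of the Non-central Chi-Squared distribution recalled in Eq.~(\ref{eq:sumrepres.ncchisq}).

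First I would address i). By the premise, conditionally on $\underline{N}=\underline{j}$ the variables $Z'_1,\, \ldots \, ,Z'_{D+1}$ are independent with $Z'_i \sim \chi^{\,2}_{2\alpha_i+2j_i}=\chi^{\,2}_{2(\alpha_i+j_i)}$. Hence, applying the defining construction of the Dirichlet distribution in Eq.~(\ref{eq:dir.def}) with shape parameters $\alpha_i+j_i$, the normalized vector $\left(Z'_1/Z'^+,\, \ldots \, ,Z'_D/Z'^+\right)$ is, conditionally on $\underline{N}=\underline{j}$, distributed as $\mbox{Dir}^{\, D}(\underline{\alpha}+\underline{j})$. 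Since $\underline{N} \sim \mbox{MW}^{\, D+1}(\alpha^+,\underline{\lambda})$, this is exactly the conditional law featuring in the mixture representation of Proposition~\ref{propo:cncdir.mixt.repres}; mixing over $\underline{N}$ therefore reproduces the distribution of $\underline{X}''$, and i) follows.

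Next I would turn to ii), which is the more delicate part. The underlying distributional identity is elementary: a $\chi^{\,2}_{2(\alpha_i+j_i)}$ variable has the same law as the sum of an independent $\chi^{\,2}_{2\alpha_i}$ and an independent $\chi^{\,2}_{2j_i}$, by additivity of the chi-squared with respect to its degrees of freedom (the central special case of Eq.~(\ref{eq:ncchisq.reprod})), and the latter in turn equals a sum of $j_i$ independent $\chi^{\,2}_2$ variables. The genuine content of ii), however, is a joint statement: it asserts the existence of a single collection $\underline{N}$, $\{Z_i\}$ and $\{F_j\}$ which are \emph{mutually} independent and simultaneously reproduce the premise and the claimed decomposition. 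I would settle this by an explicit construction on a common probability space, mirroring Eq.~(\ref{eq:sumrepres.ncchisq}): take $\underline{N} \sim \mbox{MW}^{\, D+1}(\alpha^+,\underline{\lambda})$, variables $Z_i \sim \chi^{\,2}_{2\alpha_i}$ and a sequence $\{F_j \stackrel{\bot}{\sim} \chi^{\,2}_2\}$, all mutually independent, assign to each index $i$ a disjoint block of the $F$-sequence, and set $Z'_i=Z_i+\sum_{j=1}^{N_i}F_j$.

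Finally I would verify consistency with the premise. Conditionally on $\underline{N}=\underline{j}$, each $Z'_i$ is the sum of the independent $Z_i \sim \chi^{\,2}_{2\alpha_i}$ and $j_i$ independent $\chi^{\,2}_2$ terms, whence $Z'_i \sim \chi^{\,2}_{2\alpha_i+2j_i}$ by chi-squared additivity; moreover the $Z'_i$ are conditionally independent given $\underline{N}$, since they are built from disjoint blocks of the $F$-sequence together with the independent $Z_i$. Thus the constructed vector satisfies precisely the conditional specification of the premise, so the $Z'_i$ of the proposition admit the decomposition in ii). The hard part, as anticipated, is exactly this passage from the conditional-in-distribution identity to the unconditional mutual independence of $\underline{N}$, $\{Z_i\}$ and $\{F_j\}$; the disjoint-block construction is what resolves it cleanly.
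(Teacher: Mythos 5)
Your proof is correct. The paper actually omits the proof of this proposition entirely (``straightforward''), and your argument --- obtaining \textit{i)} by matching the conditional law of the normalized vector given $\underline{N}$ with the mixture representation of Proposition~\ref{propo:cncdir.mixt.repres}, and obtaining \textit{ii)} via an explicit construction with mutually independent $\underline{N}$, $\{Z_i\}$ and disjoint blocks of i.i.d.\ $\chi^{\,2}_2$ variables, then verifying it reproduces the conditional specification of the premise --- is exactly the intended one, with the added merit of making the mutual-independence claim rigorous and of repairing the statement's implicit abuse of notation in which distinct indices $i$ would otherwise share the same $F_j$'s.
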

\begin{proof}
The proof is straightforward and is omitted. 
\end{proof}

\noindent The issue of generating from the Conditional Non-central Dirichlet model can be also addressed by resorting to \textit{i)} in Proposition~\ref{propo:cncdir.uncond.compos.repres}. Specifically, the simulation methods based on the mixture representation and on the above unconditional composition type representation differ only in the way that the Dirichlet distribution is generated: directly by the former and via Chi-Squareds by the latter. Moreover, from \textit{ii)} in Proposition~\ref{propo:cncdir.uncond.compos.repres} it is noticeable that the $Z'_i$'s play the same role as the Non-central Chi-Squared random variables involved in the definition of the standard Non-central Dirichlet model. Hence, the present representation guides us to view the CNcDir model as an analogue of the NcDir one. Indeed, like the $Y'_i$'s in  Eq.~(\ref{eq:sumrepres.ncchisq}), the $Z'_i$'s can be additively decomposed into two independent parts, a central one and a purely non-central one. Moreover, by resorting to the $Z'_i$'s, the following generalization of the independence relationship stated in Property~\ref{prope:char.prop.chisq} holds true.

% **************************************************************************************************************************************************
% ************************************************** PROPOSITION 3.13: CONDITIONAL INDEPENDENCE ****************************************************
% **************************************************************************************************************************************************
\begin{proposition}[Conditional independence]
\label{propo:cncdir.condit.indep}
Let $\underline{X}'' \sim \mbox{\normalfont{CNcDir}}^{\, D}\left(\underline{\alpha},\underline{\lambda}\right)$ and $\underline{N}=\left(N_1, \, \ldots \, ,N_{D+1}\right) \sim \mbox{\normalfont{MW}}^{\, D+1}\left(\alpha^+,\underline{\lambda}\right)$ with $N^+=\sum_{i=1}^{D+1}N_i$. Furthermore, let $\left(Z'_1, \, \ldots \, ,Z'_{D+1}\right)$ be defined as in Proposition~\ref{propo:cncdir.uncond.compos.repres} with $Z'^+=\sum_{i=1}^{D+1} Z'_i$. Then, $\underline{X}''$ and $Z'^+$ are conditionally independent given $N^+$.
\end{proposition}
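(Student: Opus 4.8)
The plan is to prove the conditional independence by first conditioning on the \emph{full} mixing vector $\underline{N}=(N_1,\ldots,N_{D+1})$ rather than on the single variable $N^+$, and then collapsing the conditioning down to $N^+$. Two facts that are already in hand make this work. On the one hand, by the construction in Proposition~\ref{propo:cncdir.uncond.compos.repres} the variables $Z'_1,\ldots,Z'_{D+1}$ are, conditionally on $\underline{N}$, independent with $Z'_i\,|\,\underline{N}\sim\chi^{\,2}_{2\alpha_i+2N_i}$, i.e.\ independent Gamma variables with a common scale parameter; hence Property~\ref{prope:char.prop.chisq} applies \emph{conditionally on} $\underline{N}$ and yields that the normalized vector $\underline{X}''=\left(Z'_1/Z'^+,\ldots,Z'_D/Z'^+\right)$ and the sum $Z'^+$ are conditionally independent given $\underline{N}$. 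On the other hand, by additivity of independent Chi-Squared variables (the $\lambda=0$ case of Eq.~(\ref{eq:ncchisq.reprod})) we have $Z'^+\,|\,\underline{N}\sim\chi^{\,2}_{2\alpha^++2N^+}$, so the conditional law of $Z'^+$ given $\underline{N}$ depends on $\underline{N}$ \emph{only through} $N^+$.

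I would then combine these two ingredients through the tower property. For bounded test functions $\phi$ and $\psi$, I would write
\[
\mathbb{E}\!\left[\phi(\underline{X}'')\,\psi(Z'^+)\,\middle|\,N^+\right]
=\mathbb{E}\!\left[\,\mathbb{E}\!\left[\phi(\underline{X}'')\,\psi(Z'^+)\,\middle|\,\underline{N}\right]\,\middle|\,N^+\right],
\]
and use the first fact to factor the inner conditional expectation as $\mathbb{E}[\phi(\underline{X}'')\,|\,\underline{N}]\cdot\mathbb{E}[\psi(Z'^+)\,|\,\underline{N}]$. The second fact lets me replace $\mathbb{E}[\psi(Z'^+)\,|\,\underline{N}]$ by $\mathbb{E}[\psi(Z'^+)\,|\,N^+]$, a quantity that is $N^+$-measurable and therefore pulls out of the outer conditional expectation; what remains collapses, again by the tower property, to $\mathbb{E}[\phi(\underline{X}'')\,|\,N^+]$, giving
\[
\mathbb{E}\!\left[\phi(\underline{X}'')\,\psi(Z'^+)\,\middle|\,N^+\right]
=\mathbb{E}\!\left[\phi(\underline{X}'')\,\middle|\,N^+\right]\,\mathbb{E}\!\left[\psi(Z'^+)\,\middle|\,N^+\right],
\]
which is exactly the asserted conditional independence.

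Equivalently, and perhaps more transparently for a statistics readership, I could argue at the level of conditional densities: mixing the factorized conditional density $\mbox{Dir}^{\,D}(\underline{x};\underline{\alpha}+\underline{j})\cdot\chi^{\,2}_{2\alpha^++2N^+}(z)$ over the conditional law of $\underline{N}$ given $N^+$ (the Multinomial of Proposition~\ref{propo:mix.distr.cond.sum}), the $\chi^{\,2}_{2\alpha^++2N^+}(z)$ factor does not depend on the summation index $\underline{j}$ and hence factors out of the sum, while the residual sum is precisely the conditional density of $\underline{X}''$ given $N^+$ displayed in Eq.~(\ref{eq:cncdir.cond.distr.nplus}). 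I expect the only genuine subtlety — the single point to get exactly right — to be the passage from conditioning on $\underline{N}$ to conditioning on $N^+$: everything hinges on $Z'^+\,|\,\underline{N}$ depending on $\underline{N}$ solely through $N^+$, which is what makes the $Z'^+$-factor measurable with respect to $N^+$ and thus decouples it from the Dirichlet part, the latter genuinely retaining its dependence on the full vector $\underline{N}$.
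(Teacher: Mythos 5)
Your proof is correct and follows essentially the same route as the paper: both arguments rest on applying Property~\ref{prope:char.prop.chisq} conditionally on the full vector $\underline{N}$, together with the observation that $Z'^+\,|\,\underline{N}\sim\chi^{\,2}_{2\alpha^++2N^+}$ depends on $\underline{N}$ only through $N^+$, and then collapse the conditioning from $\underline{N}$ down to $N^+$. The paper performs this collapse at the level of conditional densities, summing against the Multinomial law of $(N_1,\ldots,N_D)$ given $N^+$ and pulling the $Z'^+$-factor out of the sum --- which is precisely your ``equivalent'' density formulation --- so your tower-property version with test functions is just a more abstract phrasing of the same computation.
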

\begin{proof}
The ingredients of the representation of the CNcDir$^{\, D}$ model in Proposition~\ref{propo:cncdir.uncond.compos.repres} can be analogously defined as follows:
$$Z'_i \left. \, \right| \, \left(N_1, \, \ldots \, , N_D,N^+\right) \; \stackrel{\bot}{\sim } \; \left\{\begin{array}{ll} \chi^2_{2 \alpha_i+2 N_i} &  , \; i=1, \, \ldots \, , D \\ \\ \chi^2_{2 \alpha_{D+1}+2 \left(N^+-\sum_{r=1}^{D}N_r\right)} & , \; i=D+1 \end{array}\right. \; ;$$
hence, by Eq.~(\ref{eq:ncchisq.reprod}):
\begin{equation}
\left.Z'^+ \, \right| \, \left(N_1, \, \ldots \, , N_D,N^+\right) \stackrel{d}{=} \left. Z'^+ \, \right| \, N^+ \sim \chi^2_{2\alpha^++2N^+} \, ,
\label{eq:zplus.cond.chisq}
\end{equation}
where $\stackrel{d}{=}$ denotes equality in distribution. By Property~\ref{prope:char.prop.chisq}, $\underline{X}''$ and $Z'^+$ are independent conditionally on $\left(N_1, \, \ldots \, , N_D,N^+\right)$ and therefore, given this latter, the joint distribution of $\left(\underline{X}'',Z'^+\right)$ factores into the marginal distributions of $\underline{X}''$ and $Z'^+$. That said, the proof follows by noting that the joint density of $\left. \left(\underline{X}'',Z'^+\right) \, \right| \, N^+$ factores into the marginal densities of $\left. \underline{X}'' \, \right| \, N^+$ and $\left. Z'^+ \, \right| \, N^+$; indeed, under Eq.~(\ref{eq:zplus.cond.chisq}):
\begin{eqnarray*}
f_{\left.\left(\underline{X}'',\, Z'^+\right)\, \right| \, N^+}\left(\underline{x},z\right) & = & \sum_{j^+ \leq N^+} \left[\, \Pr\left(\left. \, \left(N_1, \, \ldots \, ,N_D\right)=\left(j_1,\, \ldots \, ,j_D\right) \, \right| \, N^+\right) \cdot \right. \\
& \cdot & \left. f_{\left.\left(\underline{X}'',\, Z'^+\right) \, \right| \, \left(\, N_1, \, \ldots \, ,N_D,N^+ \, \right)}\left(\underline{x},z; \, \left(j_1, \, \ldots \, ,j_D\right) \, \right) \, \right]=\\
& = & \sum_{j^+ \leq N^+} \left[\, \Pr\left(\left. \, \left(N_1, \, \ldots \, ,N_D\right)=\left(j_1,\, \ldots \, ,j_D\right) \, \right| \, N^+\right)\cdot \right. \\
& \cdot & \left.  f_{\left. \underline{X}'' \, \right| \, \left(N_1, \, \ldots \, ,N_D,N^+\right)}\left(\underline{x},\left(j_1, \, \ldots \, ,j_D\right) \, \right) \, \right]  \cdot f_{\left. Z'^+ \, \right| \, \left(N_1, \, \ldots \, ,N_D,N^+\right)}\left(z\right)\\
& = & f_{\left.\underline{X}'' \, \right| \, N^+}\left(\underline{x}\right) \cdot f_{\left.Z'^+ \, \right| \, N^+}\left(z\right) \, ,
\end{eqnarray*}
where $\left(j_1,\, \ldots \, ,j_D\right) \in \mathbb{N}_0^{\, D}$, $j^+=\sum_{i=1}^{D}j_i$ and the density $f_{\left. X'' \, \right| \, N^+}$ is given in Eq.~(\ref{eq:cncdir.cond.distr.nplus}).
\end{proof}

% ***************************************************************************************************************************************************
% *********************************************************** 3.4 DENSITY PLOTS *********************************************************************
% ***************************************************************************************************************************************************
\subsection{Density plots}
\label{subsec:cncdir.dens.plots}

In this subsection we shall focus on the case $D=2$ in order to make it possible the graphical depiction of the joint density of the model of interest.

A key advantage of the $\mbox{\normalfont{CNcDir}}^{\, 2}$ distribution over the bivariate Dirichlet is the much larger variety of shapes reachable by the density of the former on varying the non-centrality parameters. In this regard, some significant plots of the bivariate $\mbox{\normalfont{CNcDir}}$ density are displayed in Figure~\ref{fig:GrafCNc_o} for selected values of the parameter vector.

\begin{figure}[ht]
 \centering
 \subfigure
   {\includegraphics[width=6.3cm]{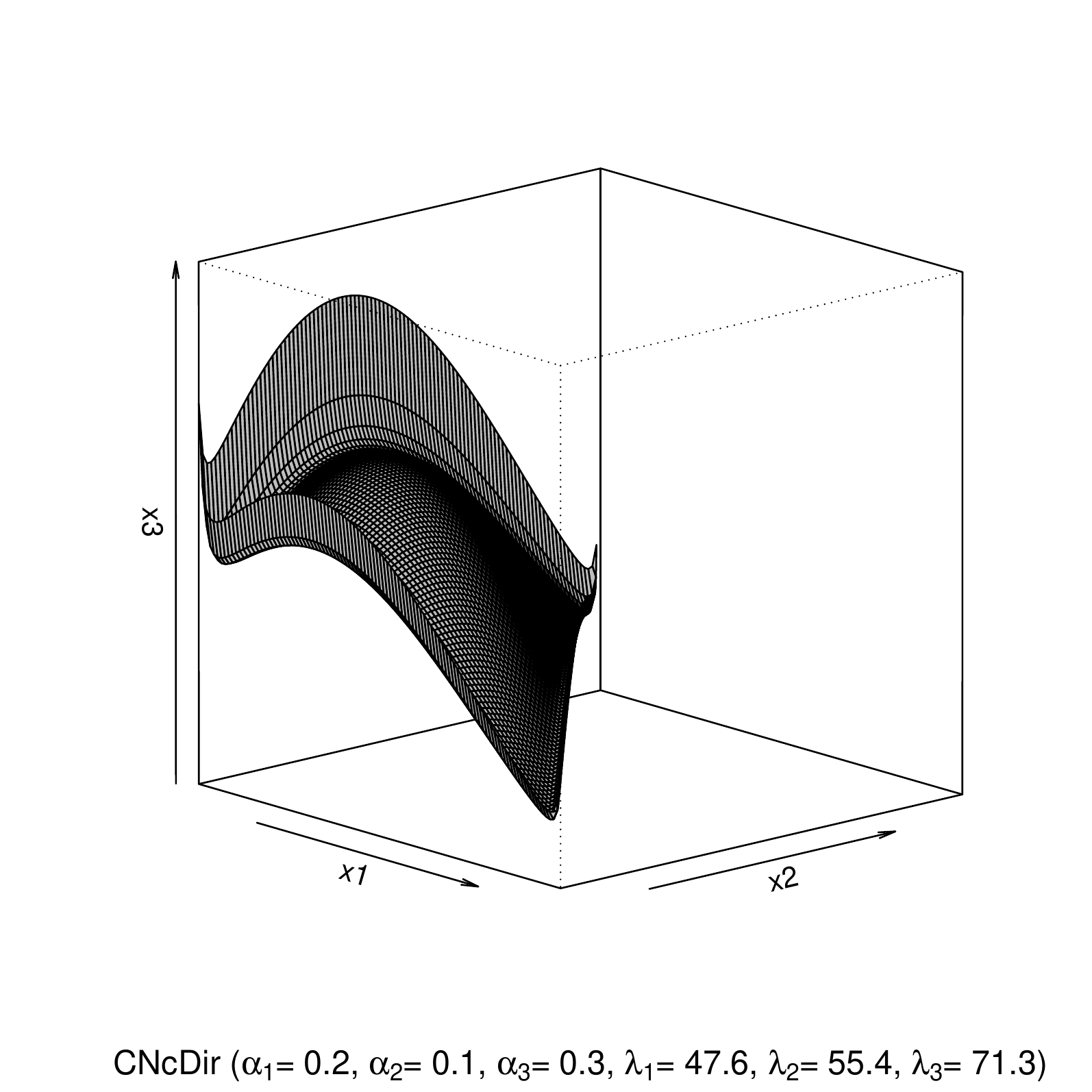}}
 \hspace{5mm}
 \subfigure
   {\includegraphics[width=6.3cm]{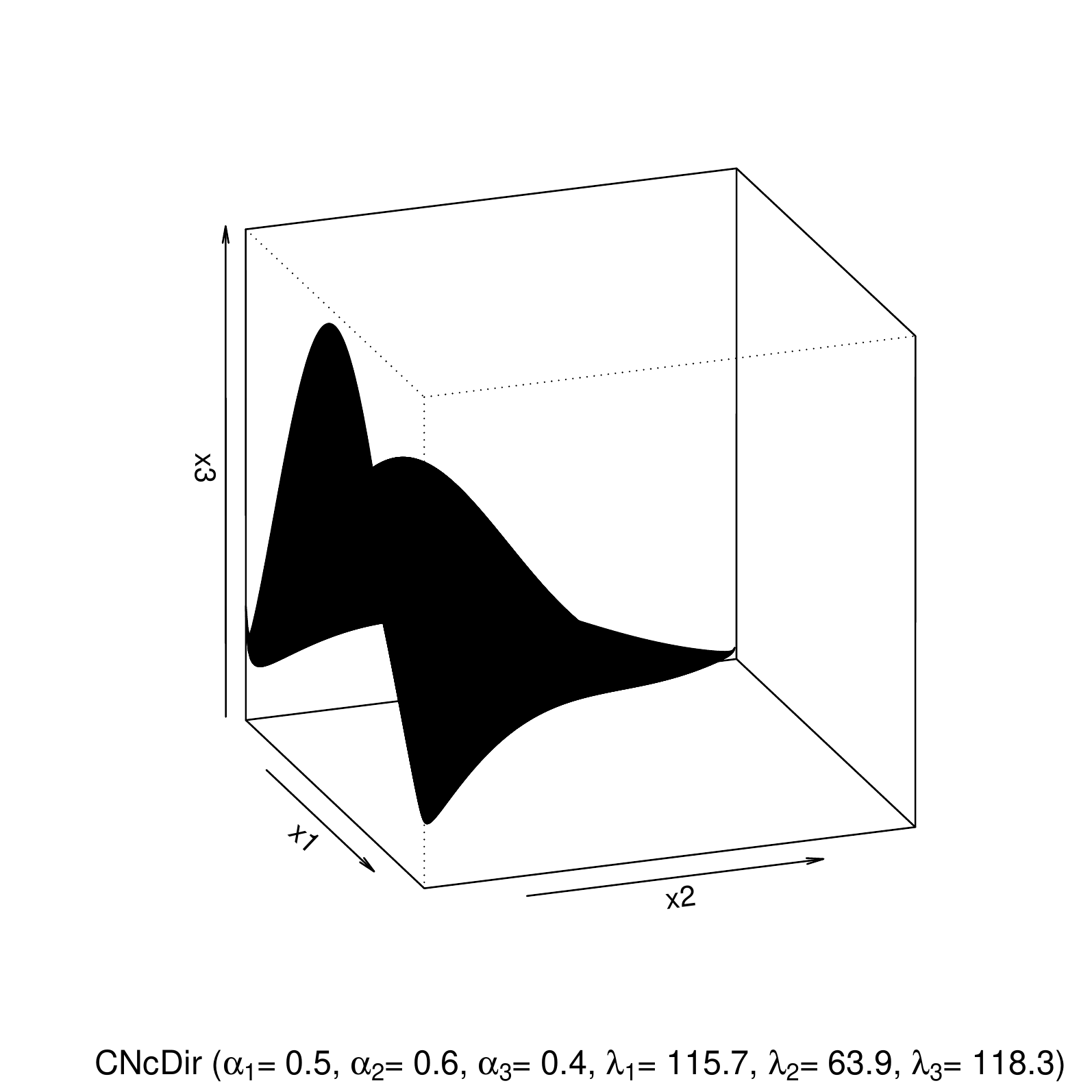}}
 \vspace{2mm}
 \subfigure
   {\includegraphics[width=6.3cm]{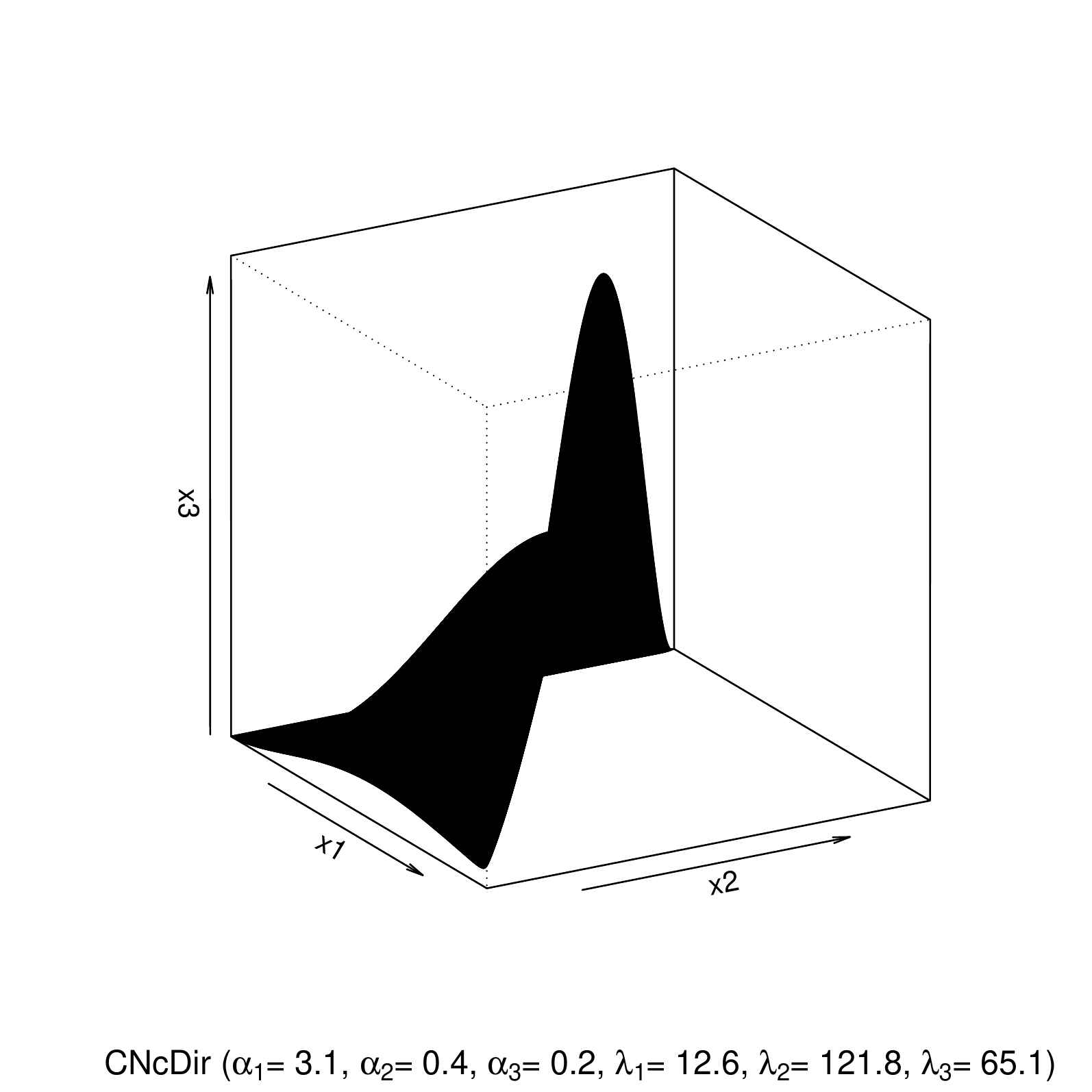}}
 \hspace{5mm}
 \subfigure
   {\includegraphics[width=6.3cm]{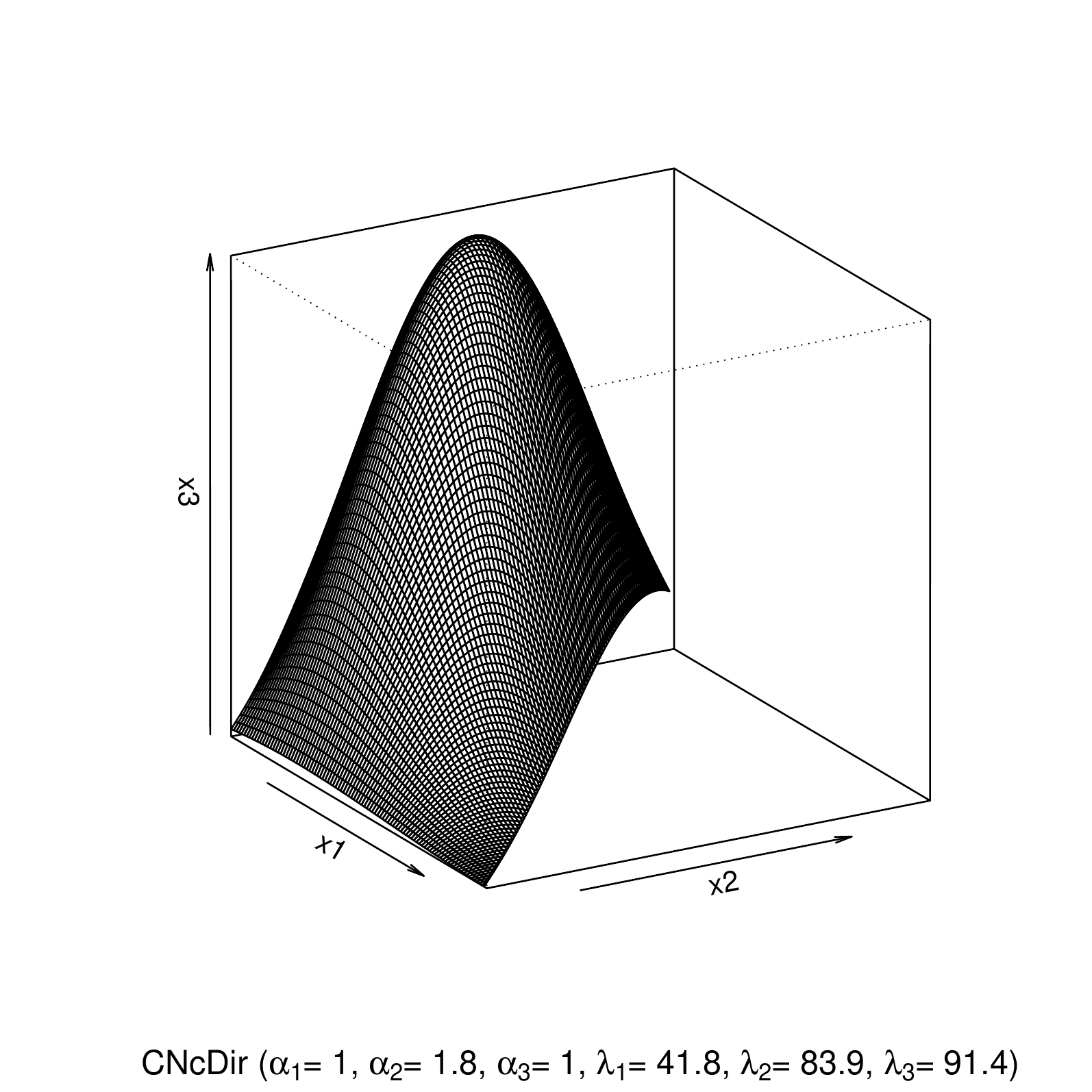}}
 \caption{Plots of the density of $\left(X''_1,X''_2\right) \sim \mbox{\normalfont{CNcDir}}^{\, 2}\left(\alpha_1,\alpha_2,\alpha_3,\lambda_1,\lambda_2,\lambda_3\right)$ for selected values of the parameter vector.}
 \label{fig:GrafCNc_o}
\end{figure}

\noindent Specifically, the main attraction of the CNcDir$^{\, 2}$ distribution lies in the following feature. When at least two shape parameters are set equal to one, the three non-centrality parameters control the height of the density at the vertices of the unit simplex by allowing the limits of this latter to have arbitrary positive and finite values. That said, the limiting values of the bivariate Conditional Non-central Dirichlet density are established herein.

% ***************************************************************************************************************************************************
% ******************************************* PROPOSITION 3.14: LIMITING VALUES OF THE BIVARIATE DENSITY ********************************************
% ***************************************************************************************************************************************************
\begin{proposition}[Limiting values of the bivariate density]
\label{propo:cncdir.dens.lims}
Let $\left(X''_1,X''_2\right) \sim \mbox{\normalfont{CNcDir}}^{\, 2}\left(\alpha_1,\alpha_2,\alpha_3,\lambda_1,\lambda_2,\lambda_3\right)$. Then:
\begin{eqnarray*}
\lefteqn{\mbox{\normalfont{CNcDir}}^{\, 2}\left(x_1,x_2;\alpha_1,\alpha_2,\alpha_3,\lambda_1,\lambda_2,\lambda_3\right) \quad \rightarrow}\\
& \stackrel{\left(x_1,\, x_2\right) \, \rightarrow \, \left(1,0\right)}{\rightarrow} & \left\{\begin{array}{ll}
\mbox{\normalfont{if} }\, \alpha_2+\alpha_3<2 \, : & + \, \infty\\
\mbox{\normalfont{if} }\, \alpha_2+\alpha_3=2 \, : & \left\{ \begin{array}{ll} \mbox{\normalfont{if} }\, \alpha_2=\alpha_3=1 \, : & \alpha_1\left(\alpha_1+1\right) \frac{_0F_1\left(\alpha_1;\frac{\lambda_1}{4}\right)}{_0F_1\left(\alpha_1+2;\frac{\lambda^+}{4}\right)} \\ \mbox{\normalfont{otherwise:} }\, & \not\exists \end{array}\right.\\
\mbox{\normalfont{if} }\, \alpha_2+\alpha_3>2 \, : & 0 
\end{array}\right. \\
& \stackrel{\left(x_1,\, x_2\right) \, \rightarrow \, \left(0,1\right)}{\rightarrow} & \left\{\begin{array}{ll}
\mbox{\normalfont{if} }\, \alpha_1+\alpha_3<2 \, : & + \, \infty\\
\mbox{\normalfont{if} }\, \alpha_1+\alpha_3=2 \, : & \left\{ \begin{array}{ll} \mbox{\normalfont{if} }\, \alpha_1=\alpha_3=1 \, : & \alpha_2\left(\alpha_2+1\right) \frac{_0F_1\left(\alpha_2;\frac{\lambda_2}{4}\right)}{_0F_1\left(\alpha_2+2;\frac{\lambda^+}{4}\right)} \\ \mbox{\normalfont{otherwise:} }\, & \not\exists \end{array}\right.\\
\mbox{\normalfont{if} }\, \alpha_1+\alpha_3>2 \, : & 0 
\end{array}\right. \\
& \stackrel{\left(x_1,\, x_2\right) \, \rightarrow \, \left(0,0\right)}{\rightarrow} & \left\{\begin{array}{ll}
\mbox{\normalfont{if} }\, \alpha_1+\alpha_2<2 \, : & + \, \infty\\
\mbox{\normalfont{if} }\, \alpha_1+\alpha_2=2 \, : & \left\{ \begin{array}{ll} \mbox{\normalfont{if} }\, \alpha_1=\alpha_2=1 \, : & \alpha_3\left(\alpha_3+1\right) \frac{_0F_1\left(\alpha_3;\frac{\lambda_3}{4}\right)}{_0F_1\left(\alpha_3+2;\frac{\lambda^+}{4}\right)} \\ \mbox{\normalfont{otherwise:} }\, & \not\exists \end{array}\right.\\
\mbox{\normalfont{if} }\, \alpha_1+\alpha_2>2 \, : & 0 
\end{array}\right. \\
%\label{eq:cncdir.dens.lims}
\end{eqnarray*}
where the symbol $\not\exists \, $ indicates that the corresponding limit does not exist because depends on the direction followed to reach the accumulation point under consideration.
\end{proposition}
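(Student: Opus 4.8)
The plan is to work entirely from the perturbation representation of the bivariate density given in Eq.~(\ref{eq:cncdir.perturb.dens}). Setting $D=2$ there, the Conditional Non-central Dirichlet density factors as the product of the central Dirichlet density $\mbox{Dir}^{\, 2}\left(x_1,x_2;\alpha_1,\alpha_2,\alpha_3\right)$ and a perturbing factor
$$
P(x_1,x_2)=\frac{{}_0F_1\!\left(\alpha_1;\tfrac{\lambda_1}{4}x_1\right)\,{}_0F_1\!\left(\alpha_2;\tfrac{\lambda_2}{4}x_2\right)\,{}_0F_1\!\left[\alpha_3;\tfrac{\lambda_3}{4}\left(1-x_1-x_2\right)\right]}{{}_0F_1\!\left(\alpha^+;\tfrac{\lambda^+}{4}\right)}\, .
$$
Since the map $\left(x_1,x_2\right)\mapsto P(x_1,x_2)$ is a product of compositions of the entire function ${}_0F_1$ with linear maps, it is continuous on the closure of $\mathcal{S}^{\, 2}$, so its limit at any vertex exists, is path-independent, and is obtained by direct substitution. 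By the closure under permutation established in Proposition~\ref{propo:cncdir.permut}, I would compute the limit at a single vertex, say $\left(x_1,x_2\right)\rightarrow\left(1,0\right)$, and recover the other two vertices by relabelling the components (which simultaneously permutes the $\alpha_i$ and $\lambda_i$).

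At the vertex $\left(1,0\right)$ one has $x_1\rightarrow1$, $x_2\rightarrow0$ and $1-x_1-x_2\rightarrow0$. Using ${}_0F_1\!\left(b;0\right)=1$ from Eq.~(\ref{eq:f01}) together with continuity, the second and third factors in the numerator of $P$ tend to $1$, while the first tends to ${}_0F_1\!\left(\alpha_1;\tfrac{\lambda_1}{4}\right)$, giving the finite limit
$$
\lim_{\left(x_1,x_2\right)\rightarrow\left(1,0\right)}P(x_1,x_2)=\frac{{}_0F_1\!\left(\alpha_1;\tfrac{\lambda_1}{4}\right)}{{}_0F_1\!\left(\alpha^+;\tfrac{\lambda^+}{4}\right)}\, .
$$
The key structural observation is that this limit is \emph{strictly positive and finite}, because every ${}_0F_1$ involved takes values in $\left[1,+\infty\right)$ for nonnegative argument. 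In the critical sub-case $\alpha_2=\alpha_3=1$ one has $\alpha^+=\alpha_1+2$, so the denominator becomes ${}_0F_1\!\left(\alpha_1+2;\tfrac{\lambda^+}{4}\right)$, matching the form stated in the Proposition.

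The conclusion then follows by combining this with the known Dirichlet limits recorded in Eq.~(\ref{eq:dir.dens.lims}) via the algebra of limits. When $\alpha_2+\alpha_3<2$ the Dirichlet limit is $+\infty$ and, multiplied by the positive finite limit of $P$, the product is again $+\infty$; when $\alpha_2+\alpha_3>2$ the Dirichlet limit is $0$ and the product is $0$; when $\alpha_2+\alpha_3=2$ with $\alpha_2=\alpha_3=1$ the Dirichlet limit equals $\alpha_1\left(\alpha_1+1\right)$, and multiplying by $\lim P$ yields precisely $\alpha_1\left(\alpha_1+1\right)\,{}_0F_1\!\left(\alpha_1;\tfrac{\lambda_1}{4}\right)\big/{}_0F_1\!\left(\alpha_1+2;\tfrac{\lambda^+}{4}\right)$. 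The remaining sub-case $\alpha_2+\alpha_3=2$ with $\left(\alpha_2,\alpha_3\right)\neq\left(1,1\right)$ is where the only genuine point of care arises: here the Dirichlet limit fails to exist (it is direction-dependent), and because $P$ has a fixed positive limit, the product inherits the non-existence, justifying the $\not\exists$ entry. I expect this interplay between the direction-dependent Dirichlet factor and the well-behaved multiplicative factor $P$ to be the only step needing explicit comment; the rest is substitution and the symmetry argument.
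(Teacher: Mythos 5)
Your proposal is correct and follows essentially the same route as the paper's own (one-line) proof: factor the density via the perturbation representation in Eq.~(\ref{eq:cncdir.perturb.dens}), invoke the Dirichlet limits of Eq.~(\ref{eq:dir.dens.lims}), and use ${}_0F_1\left(b;0\right)=1$. The additional details you supply (continuity of the perturbing factor, its strict positivity forcing inheritance of the non-existence in the $\not\exists$ sub-cases, and the permutation reduction to a single vertex) are exactly the steps the paper leaves implicit.
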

\begin{proof}
The proof ensues from Eqs.~(\ref{eq:dir.dens.lims}) and~(\ref{eq:cncdir.perturb.dens}) by observing that, in the notation of Eq.~(\ref{eq:f01}), $_0F_1\left(b;0\right)=1$.
\end{proof}
\noindent The surface representing the density function of the CNcDir$^{\, 2}$ distribution is depicted in Figure~\ref{fig:GrafCNc} for shape parameters equal one and selected values of the non-centrality parameters. The above mentioned feature of the density at study can be clearly detected from these plots.     

\begin{figure}[ht]
 \centering
 \subfigure
   {\includegraphics[width=6.3cm]{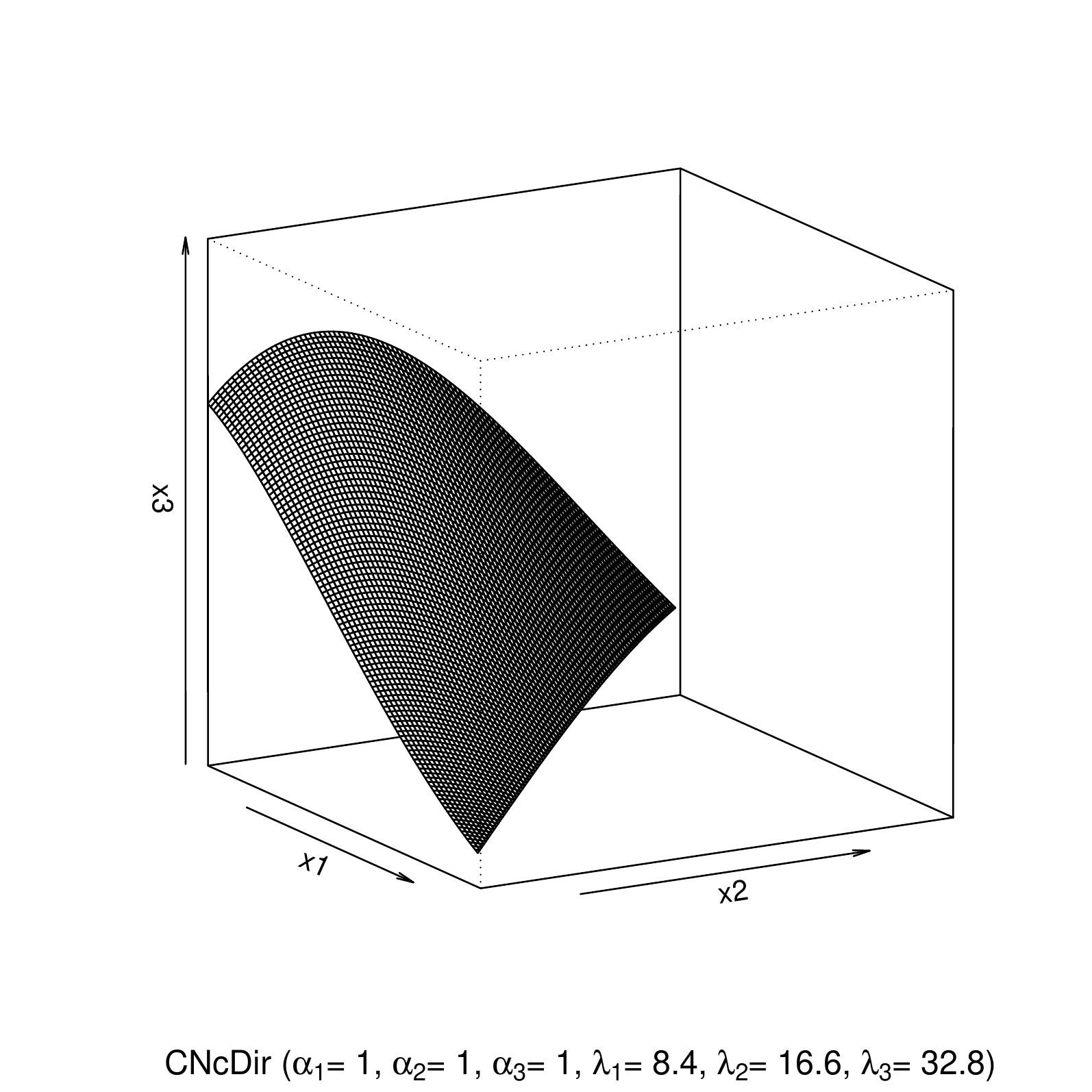}}
 \hspace{5mm}
 \subfigure
   {\includegraphics[width=6.3cm]{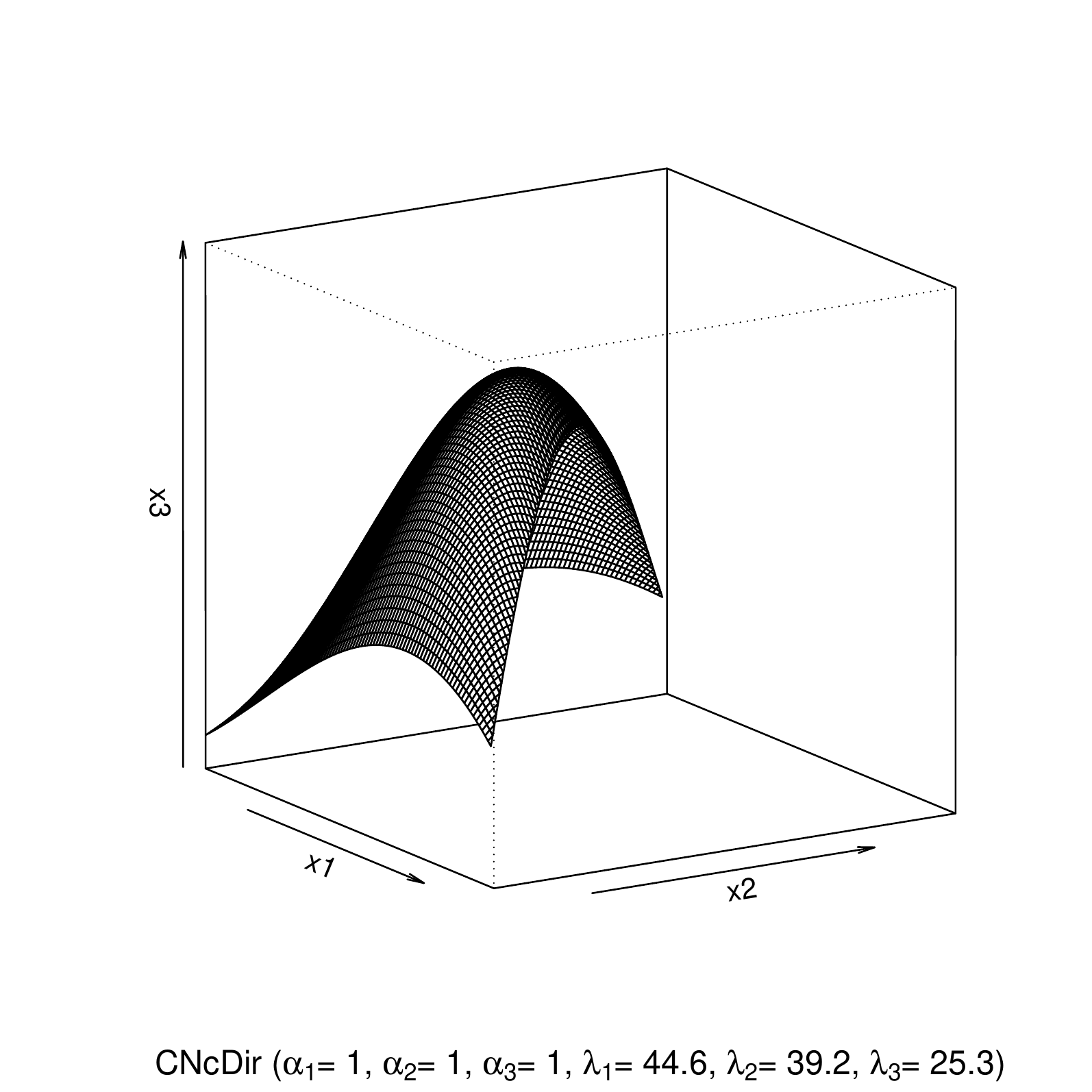}}
 \vspace{2mm}
 \subfigure
   {\includegraphics[width=6.3cm]{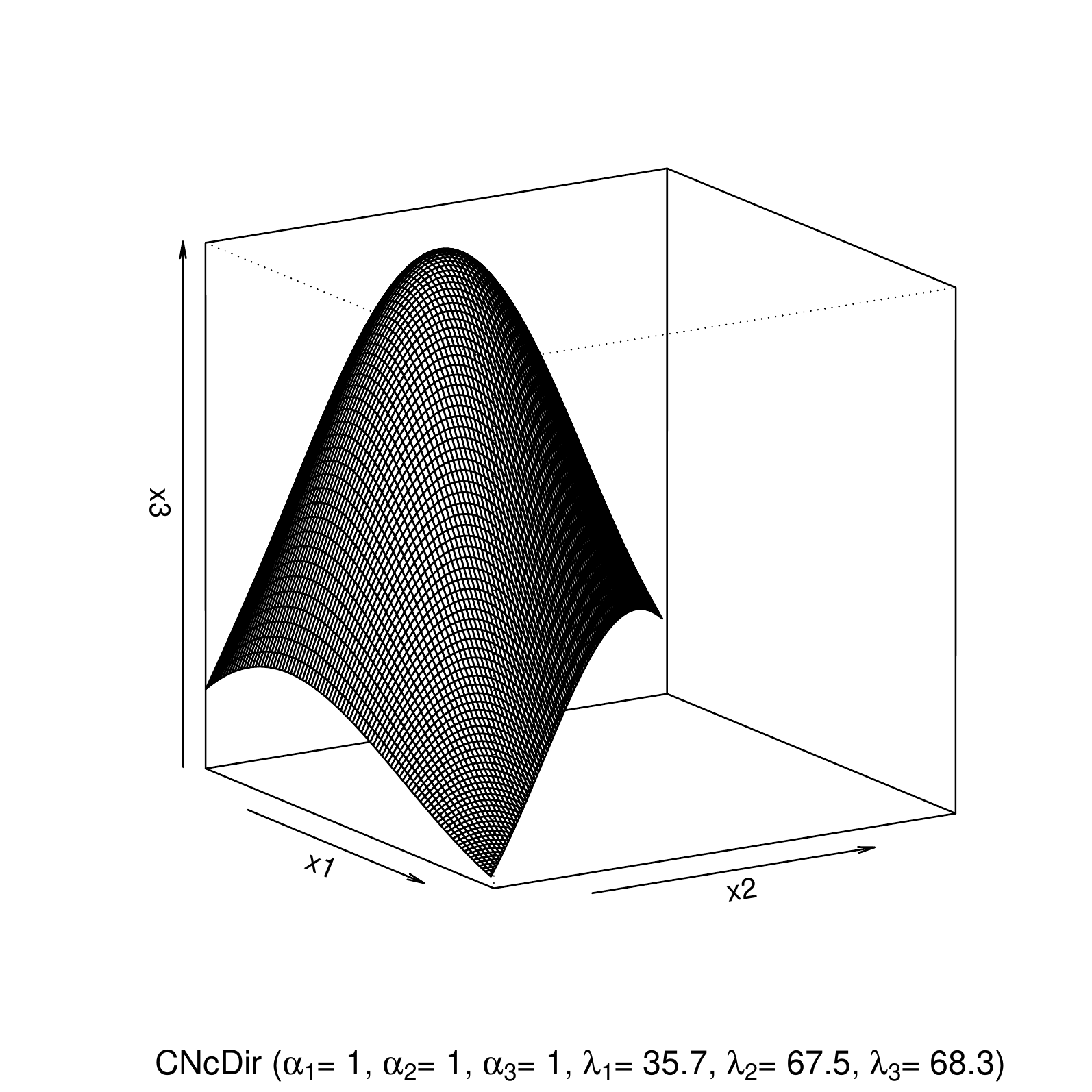}}
 \hspace{5mm}
 \subfigure
   {\includegraphics[width=6.3cm]{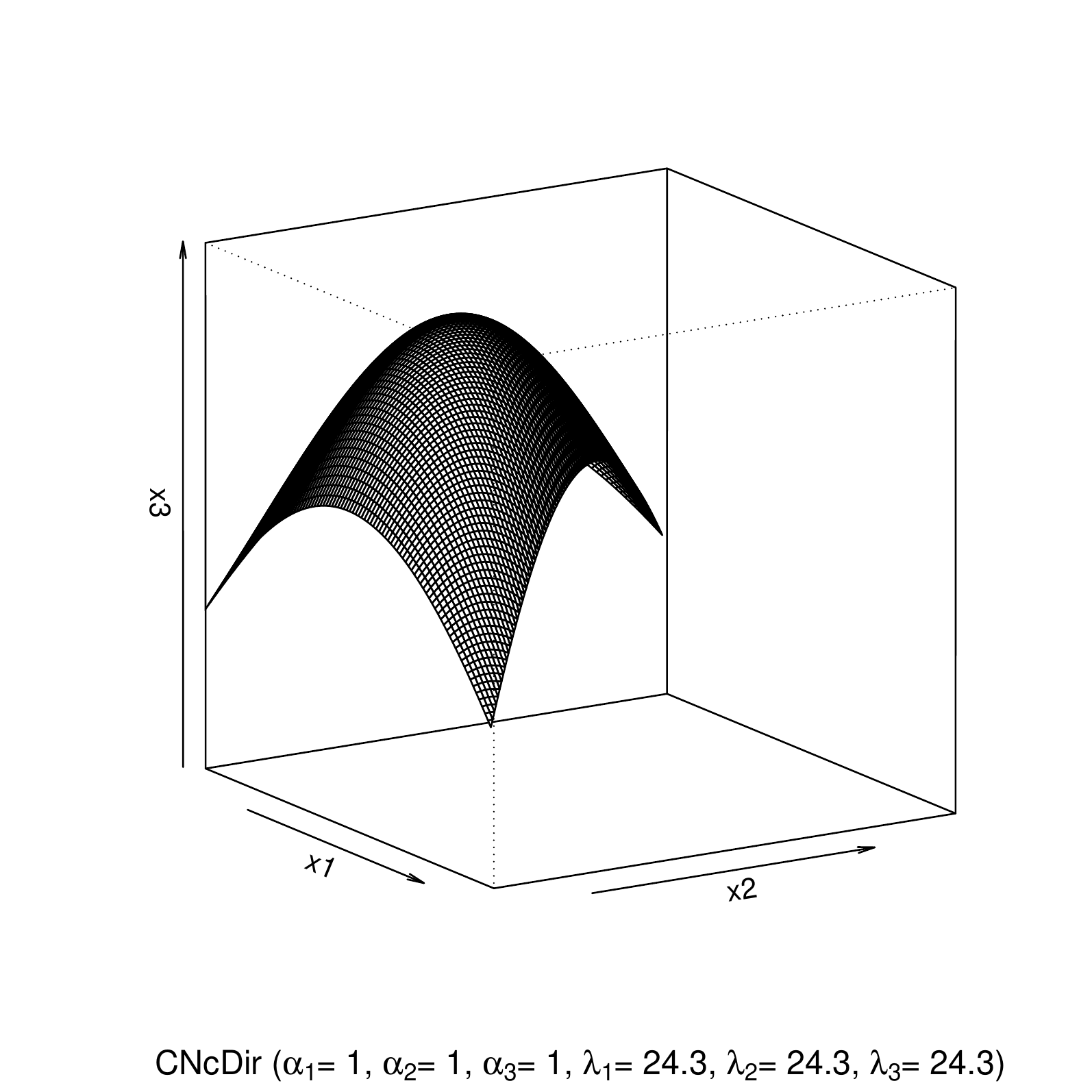}}
 \caption{Plots of the density of $\left(X''_1,X''_2\right) \sim \mbox{\normalfont{CNcDir}}^{\, 2}\left(\alpha_1,\alpha_2,\alpha_3,\lambda_1,\lambda_2,\lambda_3\right)$ for $\alpha_i=1$, $i=1,2,3$ and selected values of $\lambda_1$, $\lambda_2$, $\lambda_3$.}
 \label{fig:GrafCNc}
\end{figure}

All the elements that enable us to affirm that the CNcDir distribution preserves the applicative potential of the NcDir one are finally at hand. Indeed, there exist deep similarities between the varieties of shapes reachable by the two densities in the case that, under the same values for the shape parameters, the non-centrality parameters of the Conditional model are assigned much higher values than those used for the existing one. In light of the mixture representations of the two models, in fact, the differences between such distributions can be entirely explained by the differences existing between their respective mixing distributions. In light of Eq.~(\ref{eq:multidim.peso.cond.sum}), these latter are completely due to the differences between the sums of their components $N^+$ and $M^+$, the probability mass functions of which depend on $\underline{\lambda}$ by means of $\lambda^+$. By virtue of the arguments put forward in \cite{Ors21} (Section 4) with reference to the comparison between the Conditional and the standard Doubly Non-central Beta distributions, the link between the $\lambda^+$ values of $N^+$ and $M^+$ can be understood by comparing their modes as functions of $\lambda^+$. Specifically, for a given value of the mode of $M^+$, one can find the value $\lambda^+$ relative to $N^+$ which produces similar mode; as shown in the above cited reference, this $\lambda^+$ value is much larger than the one relative to $M^+$ and gets even larger when $\alpha^+$ increases.

% ***************************************************************************************************************************************************
% ***************************************************************** 3.5 MOMENTS *********************************************************************
% ***************************************************************************************************************************************************
\subsection{Moments}
\label{subsec:cncdir.mom}

For the sake of simplicity, in all the present subsection we shall focus on the case $D=2$; however, by virtue of Eq.~(\ref{eq:cncdir.bidim.marg}), the results derived herein also apply for any two-dimensional marginal of the CNcDir$^{\, D}$ model with $D>2$.

By analogy with the form of the density in Eq.~(\ref{eq:cncdir.dens}) and by Eq.~(\ref{eq:dir.mixr1r2mom}), for every $r_1, \, r_2 \in \mathbb{N}$, the mixed raw moment of order $(r_1, r_2)$ of $\left(X''_1,X''_2\right) \sim \mbox{\normalfont{CNcDir}}^{\, 2}\left(\alpha_1,\alpha_2,\alpha_3,\lambda_1,\lambda_2,\lambda_3\right)$ can be stated as
\begin{eqnarray}
\lefteqn{\mathbb{E}\left[\left(X_1''\right)^{ \, r_1} \left(X_2''\right)^{ \, r_2} \, \right]=  \qquad \qquad \qquad \qquad \qquad \qquad \left\{\begin{array}{l} r^+=r_1+r_2 \\ j^+=j_1+j_2+j_3 \end{array}\right. }\nonumber \\
& = &  \sum_{j_1, \, j_2, \, j_3 \, = \, 0}^{+\infty} \left\{ \, \Pr\left[ \; \left(N_1,N_2,N_3\right)=\left(j_1,j_2,j_3\right) \; \right] \cdot \frac{\left(\alpha_1+j_1\right)_{r_1} \, \left(\alpha_2+j_2\right)_{r_2}}{\left(\alpha^+ +j^+\right)_{r^+}} \, \right\}  \, ,
\label{eq:cncdir.mixr1r2mom.def}
\end{eqnarray}
i.e. as the multiple infinite series of the mixed raw moments of order $(r_1,r_2)$ of the $\mbox{Dir}^{\, 2}\left(\alpha_1+j_1,\alpha_2+j_2,\alpha_3+j_3\right)$ distributions weighted by the $\mbox{MW}^{\, 3}\left(\alpha^+,\lambda_1,\lambda_2,\lambda_3\right)$ probabilities. Moreover, by Eq.~(\ref{eq:poch.symb.sum}), Eq.~(\ref{eq:cncdir.mixr1r2mom.def}) can be equivalently expressed as the following doubly infinite sum of generalized hypergeometric functions with one numerator parameter and two denominator parameters:
\begin{eqnarray}
\lefteqn{\mathbb{E}\left[\left(X_1''\right)^{ \, r_1} \left(X_2''\right)^{ \, r_2} \, \right]=\frac{\left(\alpha_1\right)_{r_1} \left(\alpha_2\right)_{r_2}}{\left(\alpha^+\right)_{r^+}} \cdot \sum_{j_2, \, j_3 \, = \, 0}^{+\infty} \left[ \, \frac{\left(\alpha_2+r_2\right)_{j_2}}{\left(\alpha_2\right)_{j_2}  \, \left(\alpha^++r^+\right)_{j_2+j_3}} \right. \cdot}\nonumber \\
& \cdot & \left. \frac{\left(\frac{\lambda_2}{4}\right)^{j_2}}{j_2 !} \frac{\left(\frac{\lambda_3}{4}\right)^{j_3}}{j_3 !} \cdot \, \frac{_1^{\, }F^{\, }_2\left(\alpha_1+r_1;\alpha_1,\alpha^++r^++j_2+j_3;\frac{\lambda_1}{4}\right)}{_0F_1\left(\alpha^+; \frac{\lambda^+}{4}\right)} \, \right]\, .
\label{eq:cncdir.mixr1r2mom.def2}
\end{eqnarray}
Unfortunately, the above formulas are computationally cumbersome. Hence, in the following we shall provide an interesting general expression for the quantity at study that allows the computation of this latter to be reduced from the doubly infinite sum in Eq.~(\ref{eq:cncdir.mixr1r2mom.def2}) to a surprisingly simple form given by a doubly finite sum.

% **************************************************************************************************************************************************
% **************************************** PROPOSITION 3.15: MIXED RAW MOMENTS OF THE BIVARIATE DISTRIBUTION ***************************************
% **************************************************************************************************************************************************
\begin{proposition}[Mixed raw moments of the bivariate distribution]
\label{propo:cncdir.momr1r2}
The mixed raw moment of order $(r_1,r_2)$ of $\left(X''_1,X''_2\right) \sim \mbox{\normalfont{CNcDir}}^{\, 2}\left(\alpha_1,\alpha_2,\alpha_3,\lambda_1,\lambda_2,\lambda_3\right)$ admits the following expression:
\begin{eqnarray}
\lefteqn{\mathbb{E}\left[\left(X''_1\right)^{r_1} \left(X''_2\right)^{r_2}\right] \quad = \quad \frac{\left(\alpha_1\right)_{r_1}  \left(\alpha_2\right)_{r_2}}{\left(\alpha^+\right)_{r^+}}  \; \cdot \qquad \quad \left\{\begin{array}{l} r_1,r_2 \in \mathbb{N}, \, r^+=r_1+r_2 \\ j^+=j_1+j_2 \end{array}\right.} \nonumber\\
& \cdot & \sum_{j_1=0}^{r_1} \sum_{j_2=0}^{r_2} \frac{{r_1 \choose j_1} {r_2 \choose j_2} \left(\frac{\lambda_1}{4}\right)^{j_1} \left(\frac{\lambda_2}{4}\right)^{j_2}}{\left(\alpha_1\right)_{j_1} \left(\alpha_2\right)_{j_2} \left(\alpha^++r^+\right)_{j^+}} \frac{_0F_1\left(\alpha^++r^++j^+;\frac{\lambda^+}{4}\right)}{_0F_1\left(\alpha^+;\frac{\lambda^+}{4}\right)} \, .
\label{eq:cncdir.momr1r2}
\end{eqnarray}
\end{proposition}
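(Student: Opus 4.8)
The plan is to start from the mixture-weighted expression for the moment in Eq.~(\ref{eq:cncdir.mixr1r2mom.def}), insert the $\mbox{MW}^{\, 3}$ probability mass function from Eq.~(\ref{eq:multidim.peso.prob}), and reduce the resulting triple infinite series to the claimed finite double sum by means of the Pochhammer identities recalled in Section~\ref{sec:prelim} together with the Chu--Vandermonde summation. First I would use the decomposition in Eq.~(\ref{eq:poch.symb.sum}) twice. On the one hand $(\alpha_i+j_i)_{r_i}=(\alpha_i)_{r_i}(\alpha_i+r_i)_{j_i}/(\alpha_i)_{j_i}$ for $i=1,2$; on the other hand the factor $(\alpha^+)_{j^+}$ carried by the weight and the factor $(\alpha^++j^+)_{r^+}$ in the denominator of the Dirichlet moment combine, since $(\alpha^+)_{j^+}(\alpha^++j^+)_{r^+}=(\alpha^+)_{r^+}(\alpha^++r^+)_{j^+}$. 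These two rearrangements factor out $(\alpha_1)_{r_1}(\alpha_2)_{r_2}/(\alpha^+)_{r^+}$ and leave the moment equal to that prefactor, times $1/{}_0F_1(\alpha^+;\lambda^+/4)$, times the triple series
\[
W=\sum_{j_1,j_2,j_3=0}^{+\infty}\frac{(\alpha_1+r_1)_{j_1}(\alpha_2+r_2)_{j_2}}{(\alpha_1)_{j_1}(\alpha_2)_{j_2}(\alpha^++r^+)_{j_1+j_2+j_3}}\,\frac{(\lambda_1/4)^{j_1}}{j_1!}\frac{(\lambda_2/4)^{j_2}}{j_2!}\frac{(\lambda_3/4)^{j_3}}{j_3!}.
\]

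The crux is to show that $W$ collapses to the announced finite double sum. The device is the single-variable identity
\[
\frac{(\alpha+r)_{n}}{(\alpha)_{n}\,n!}=\sum_{k=0}^{\min(n,r)}\frac{\binom{r}{k}}{(\alpha)_{k}\,(n-k)!},\qquad n,r\in\mathbb{N}_0,
\]
valid for every $\alpha>0$. Multiplying through by $n!$ and rewriting $n!/(n-k)!=(-1)^k(-n)_k$ and $\binom{r}{k}=(-1)^k(-r)_k/k!$, its right-hand side becomes $_2F_1(-r,-n;\alpha;1)$, so the identity is exactly the Chu--Vandermonde summation theorem (see \cite{SriKar85}); alternatively it can be extracted from the binomial expansion in Eq.~(\ref{eq:poch.symb.binom}). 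I would apply this identity to each of the two quotients $(\alpha_i+r_i)_{j_i}/[(\alpha_i)_{j_i}\,j_i!]$ occurring in $W$, for $i=1,2$, thereby turning the ``difficult'' numerators into finite sums over $k_1\le r_1$ and $k_2\le r_2$.

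After interchanging the (absolutely convergent, all-positive) summations so that $k_1$ and $k_2$ become the outermost indices, I would reindex the inner series by $a=j_1-k_1$, $b=j_2-k_2$, $c=j_3$. The finite powers $(\lambda_1/4)^{k_1}(\lambda_2/4)^{k_2}$ then factor out, and what remains inside is $\sum_{a,b,c\ge 0}(\lambda_1/4)^{a}(\lambda_2/4)^{b}(\lambda_3/4)^{c}/[(\alpha^++r^+)_{k_1+k_2+a+b+c}\,a!\,b!\,c!]$. Factoring $(\alpha^++r^+)_{k_1+k_2+a+b+c}=(\alpha^++r^+)_{k_1+k_2}\,(\alpha^++r^++k_1+k_2)_{a+b+c}$ and grouping by $m=a+b+c$, the multinomial theorem — used exactly as in the proof of Proposition~\ref{propo:mix.distr.somma} — converts $\sum_{a+b+c=m}(\lambda_1/4)^{a}(\lambda_2/4)^{b}(\lambda_3/4)^{c}/(a!\,b!\,c!)$ into $(\lambda^+/4)^m/m!$. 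Through Eq.~(\ref{eq:f01}) the inner series is then recognised as $(\alpha^++r^+)_{k_1+k_2}^{-1}\,{}_0F_1(\alpha^++r^++k_1+k_2;\lambda^+/4)$, and renaming $k_1,k_2$ as $j_1,j_2$ reproduces Eq.~(\ref{eq:cncdir.momr1r2}) verbatim.

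I expect the main obstacle to be recognising that the passage from infinitely to finitely many terms is governed by the Chu--Vandermonde identity: it is the single nonelementary ingredient, and applying it in the right place (to $(\alpha_i+r_i)_{j_i}/[(\alpha_i)_{j_i}j_i!]$ rather than to the raw Pochhammer symbols) is precisely what makes the subsequent reindexing telescope into the ${}_0F_1$. The remaining steps — the two Pochhammer rearrangements, the interchange of summations, and the multinomial collapse — are routine bookkeeping, the last of which mirrors a computation already carried out in Subsection~\ref{subsec:cncdir.mix.distr}.
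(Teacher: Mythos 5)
Your proof is correct, and it takes a genuinely different route from the paper's. The paper never touches the unconditional triple series: it conditions on $N^+$, so that by Eq.~(\ref{eq:cncdir.cond.distr.nplus}) the task reduces to factorial-type moments of a Multinomial; it expands $(\alpha_i+L_i)_{r_i}=[(\alpha_i-1)+(L_i+1)]_{r_i}$ via Eq.~(\ref{eq:poch.symb.binom}), evaluates the Multinomial sums by two applications of Ljunggren's identity, Eq.~(\ref{eq:ljunggren.id}), and only then un-conditions by iterated expectations, with a final resummation again based on Eq.~(\ref{eq:poch.symb.binom}). You instead work directly on Eq.~(\ref{eq:cncdir.mixr1r2mom.def}); after the two (correct) rearrangements by Eq.~(\ref{eq:poch.symb.sum}), your key lemma
\begin{equation*}
\frac{(\alpha+r)_{n}}{(\alpha)_{n}\,n!}=\sum_{k=0}^{\min(n,r)}\frac{\binom{r}{k}}{(\alpha)_{k}\,(n-k)!}
\end{equation*}
is valid: multiplying by $n!$ makes the right-hand side the terminating series ${}_2F_1\left(-r,-n;\alpha;1\right)$, which Chu--Vandermonde evaluates as $(\alpha+n)_r/(\alpha)_r=(\alpha+r)_n/(\alpha)_n$. (Your aside that the same identity can be extracted from Eq.~(\ref{eq:poch.symb.binom}) is the one loose claim, since that expansion involves ascending rather than descending factorials of $r$; nothing in your argument rests on it.) The interchanges of summation are justified by positivity of all terms, and the reindexing and multinomial collapse into ${}_0F_1$ mirror the proof of Proposition~\ref{propo:mix.distr.somma}, exactly as you note; renaming $(k_1,k_2)$ as $(j_1,j_2)$ then reproduces Eq.~(\ref{eq:cncdir.momr1r2}). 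As for what each route buys: yours is shorter and shows from the outset why the answer is a finite double sum --- the Chu--Vandermonde expansion of $(\alpha_i+r_i)_{j_i}/[(\alpha_i)_{j_i}\,j_i!]$ terminates at $r_i$ --- whereas in the paper the finite sums emerge only at the end of a longer computation; the paper's derivation, in exchange, is integrated with the probabilistic structure it has already developed (the Multinomial conditional in Eq.~(\ref{eq:multidim.peso.cond.sum}) and the conditional density given $N^+$) and yields the conditional moments given $N^+$ as an intermediate result of independent interest. At bottom, both arguments rest on a Vandermonde-type summation --- Ljunggren's identity there, Chu--Vandermonde here --- applied at different stages.
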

\begin{proof}
Let $\left(L_1,L_2\right)$ have a $\mbox{Multinomial}^{\, 2}\left(N^+,\theta_1,\theta_2\right)$ distribution conditionally on $N^+$ with $\theta_i=\lambda_i \, / \, \lambda^+$, $i=1,2$. By Eq.~(\ref{eq:cncdir.cond.distr.nplus}), one has:
\begin{eqnarray}
\lefteqn{\mathbb{E}\left[\left. \left(X''_1\right)^{r_1} \left(X''_2\right)^{r_2} \right| \, N^+\right]=}\nonumber \\
& = & \int_{\left(x_1,\, x_2\right) \, \in \, \mathcal{S}^2} x_1^{\, r_1} x_2^{\, r_2} \cdot f_{\left.\left(X''_1,\, X''_2\right) \, \right| \, N^+}\left(x_1,x_2;\alpha_1,\alpha_2,\alpha_3,\lambda_1,\lambda_2,\lambda_3\right) \, dx_1 \, dx_2=\nonumber \\
& = & \sum_{l_1=0}^{N^+} \sum_{l_2=0}^{N^+- \, l_1} \frac{\left(\alpha_1+l_1\right)_{r_1} \left(\alpha_2+l_2\right)_{r_2} }{\left(\alpha^++N^+\right)_{r^+}} \, {N^+ \choose l_1 \; l_2} \; \theta_1^{\, l_1} \,  \theta_2^{\, l_2} \left(1-\theta_1-\theta_2\right)^{N^+- \, l_1-l_2} = \nonumber \\
& = & \frac{\mathbb{E}\left[\left. \left(\alpha_1+L_1\right)_{r_1} \left(\alpha_2+L_2\right)_{r_2} \right| \, N^+\right]}{\left(\alpha^++N^+\right)_{r^+}} \, ;
\label{eq:mom.dim1}
\end{eqnarray}
in light of Eq.~(\ref{eq:poch.symb.binom}):
$$\left(\alpha_i+L_i\right)_{r_i}=\left[\left(\alpha_i-1\right)+\left(L_i+1\right)\right]_{r_i}=\sum_{j_i=0}^{r_i} {r_i \choose j_i} \left(\alpha_i-1\right)_{r_i-j_i} \left(L_i+1\right)_{j_i} \, , \quad \; i=1,2 \, ,$$
so that one obtains:
\begin{eqnarray}
\lefteqn{\mathbb{E}\left[\left. \left(\alpha_1+L_1\right)_{r_1} \left(\alpha_2+L_2\right)_{r_2} \right| \, N^+\right]=} \nonumber \\
& = & \sum_{j_1=0}^{r_1} \sum_{j_2=0}^{r_2} {r_1 \choose j_1} {r_2 \choose j_2} \left(\alpha_1-1\right)_{r_1-j_1} \left(\alpha_2-1\right)_{r_2-j_2} \mathbb{E}\left[\left. \left(L_1+1\right)_{j_1} \left(L_2+1\right)_{j_2} \right| \, N^+\right] \, , \nonumber \\
\label{eq:mom.dim1b}
\end{eqnarray}
where:
\begin{eqnarray}
\lefteqn{\mathbb{E}\left[\left. \left(L_1+1\right)_{j_1} \left(L_2+1\right)_{j_2} \right| \, N^+\right]=} \nonumber \\
& = & \sum_{l_1=0}^{N^+} \sum_{l_2=0}^{N^+-\, l_1} \left(l_1+1\right)_{j_1} \left(l_2+1\right)_{j_2} \, {N^+ \choose l_1 \; l_2} \; \theta_1^{\, l_1} \,  \theta_2^{\, l_2} \left(1-\theta_1-\theta_2\right)^{N^+- \, l_1- \, l_2} \, .
\label{eq:mom.dimb2}
\end{eqnarray}
By Eq.~(\ref{eq:poch.symb}), for every $j_i=0,\, \ldots \, ,r_i$, $i=1,2$:
\begin{equation}
\left(l_i+1\right)_{j_i}=\frac{\Gamma\left(l_i+j_i+1\right)}{\Gamma\left(l_i+1\right)}=\frac{\left(l_i+j_i\right) !}{l_i !}={l_i+j_i \choose l_i} \, j_i! \; ;
\label{eq:mom.dimb3}
\end{equation}
under Eq.~(\ref{eq:mom.dimb3}), Eq.~(\ref{eq:mom.dimb2}) can be thus rewritten as follows: 
\begin{eqnarray}
\lefteqn{\mathbb{E}\left[\left. \left(L_1+1\right)_{j_1} \left(L_2+1\right)_{j_2} \right| \, N^+\right]=} \nonumber \\
& = & j_1 ! \; j_2 ! \, \sum_{l_1=0}^{N^+} {l_1 +j_1 \choose l_1} {N^+ \choose l_1} \left(1-\theta_1\right)^{N^+-\, l_1} \theta_1^{\, l_1}  \cdot \nonumber \\
& \cdot & \sum_{l_2=0}^{N^+-\, l_1} {l_2 +j_2 \choose l_2} {N^+-\, l_1 \choose l_2} \left(1-\frac{\theta_2}{1-\theta_1}\right)^{\left(N^+-\, l_1\right)- \, l_2} \left(\frac{\theta_2}{1-\theta_1}\right)^{\, l_2}  \, .
\label{eq:mom.dim1c}
\end{eqnarray}
In carrying out the prove, reference must be made to Ljunggren's Identity, namely
\begin{equation}
\sum_{k=0}^{n} {\alpha+k \choose k} {n \choose k} \left(x-y\right)^{n-k} y^k=\sum_{k=0}^{n} {\alpha \choose k} {n \choose k} \, x^{n-k} \, y^k \, , 
\label{eq:ljunggren.id}
\end{equation}
which is (3.18) in \cite{Gou72}. By the special case of Eq.~(\ref{eq:ljunggren.id}) where $k=l_2$, $n=N^+- \, l_1$, $\alpha=j_2$, $x=1$, $y=\theta_2/(1-\theta_1)$, the final sum in Eq.~(\ref{eq:mom.dim1c}) can be equivalently expressed as
$$
\sum_{l_2=0}^{N^+-\, l_1} {j_2 \choose l_2} \, {N^+-\, l_1 \choose l_2} \left(\frac{\theta_2}{1-\theta_1}\right)^{l_2} \, ,$$
%\label{eq:mom.dim2}
so that, for every $j_i=0,\, \ldots \, ,r_i$, $i=1,2$, Eq.~(\ref{eq:mom.dim1c}) can be restated as
\begin{eqnarray}
\lefteqn{\mathbb{E}\left[\left. \left(L_1+1\right)_{j_1} \left(L_2+1\right)_{j_2} \right| \, N^+\right]=} \nonumber \\
& = & j_1 ! \, j_2 ! \, \sum_{l_1=0}^{N^+} \sum_{l_2=0}^{N^+-\, l_1} {j_2 \choose l_2} {l_1+j_1 \choose l_1} {N^+ \choose l_1 \; l_2}  \,  \theta_1^{\, l_1} \,  \theta_2^{\, l_2} \left(1-\theta_1\right)^{N^+- \, l_1-\, l_2}= \nonumber \\
& = & j_1 ! \, j_2 ! \, \sum_{l_2=0}^{N^+} {j_2 \choose l_2} \, {N^+ \choose l_2} \,  \theta_2^{\, l_2} \cdot \nonumber \\
& \cdot & \sum_{l_1=0}^{N^+-\, l_2}  {l_1+j_1 \choose l_1} \, {N^+-l_2 \choose l_1}  \left(1-\theta_1\right)^{(N^+- \, l_2)-\, l_1} \,  \theta_1^{\, l_1}   \, .
\label{eq:mom.dim2a}
\end{eqnarray}
Under the special case of Eq.~(\ref{eq:ljunggren.id}) where $k=l_1$, $n=N^+- \, l_2$, $\alpha=j_1$, $x=1$, $y=\theta_1$, Eq.~(\ref{eq:mom.dim2a}) can be rewritten in the form of
\begin{eqnarray}
\lefteqn{\mathbb{E}\left[\left. \left(L_1+1\right)_{j_1} \left(L_2+1\right)_{j_2} \right| \, N^+\right]=} \nonumber \\
& = & j_1 ! \, j_2 ! \, \sum_{l_2=0}^{N^+} {j_2 \choose l_2} {N^+ \choose l_2} \,  \theta_2^{\, l_2} \sum_{l_1=0}^{N^+-\, l_2} {j_1 \choose l_1} {N^+-\, l_2 \choose l_1}   \,  \theta_1^{\, l_1} \, ,
\label{eq:mom.dim2b}
\end{eqnarray}
so that, by Eqs.~(\ref{eq:mom.dim1}),~(\ref{eq:mom.dim1b}) and~(\ref{eq:mom.dim2b}), conditionally on $N^+$, the mixed raw moment of interest takes on the following expression:
\begin{eqnarray}
\lefteqn{\mathbb{E}\left[\left. \left(X''_1\right)^{r_1} \left(X''_2\right)^{r_2} \right| \, N^+ \right]=}\nonumber \\ 
& = & \frac{1}{\left(\alpha^++N^+\right)_{r^+}} \sum_{j_1=0}^{r_1} \sum_{j_2=0}^{r_2} {r_1 \choose j_1} j_1 ! \, {r_2 \choose j_2} j_2 ! \left(\alpha_1-1\right)_{r_1-j_1} \left(\alpha_2-1\right)_{r_2-j_2} \cdot \nonumber \\
& \cdot & \sum_{l_2=0}^{N^+} {j_2 \choose l_2} {N^+ \choose l_2} \,  \theta_2^{\, l_2} \sum_{l_1=0}^{N^+-\, l_2} {j_1 \choose l_1} {N^+-\, l_2 \choose l_1}   \,  \theta_1^{\, l_1} \, ;
\label{eq:mom.dim2c}
\end{eqnarray}
applying the law of iterated expectations to Eq.~(\ref{eq:mom.dim2c}) finally leads to:
\begin{eqnarray}
\lefteqn{\mathbb{E}\left[\left(X''_1\right)^{r_1} \left(X''_2\right)^{r_2}\right]=}\nonumber \\ 
& = & \frac{\Gamma\left(\alpha^+\right)}{_0F_1\left(\alpha^+;\frac{\lambda^+}{4}\right)} \sum_{j_1=0}^{r_1} \sum_{j_2=0}^{r_2} {r_1 \choose j_1} j_1 ! \, {r_2 \choose j_2} j_2 ! \left(\alpha_1-1\right)_{r_1-j_1} \left(\alpha_2-1\right)_{r_2-j_2} \cdot \nonumber \\
& \cdot & \sum_{n=0}^{+\infty} \frac{\left(\frac{\lambda^+}{4}\right)^n}{n ! \, \Gamma\left(\alpha^++r^++n\right)} \sum_{l_2=0}^{n} {j_2 \choose l_2} {n \choose l_2} \,  \theta_2^{\, l_2} \sum_{l_1=0}^{n-\, l_2} {j_1 \choose l_1} {n-\, l_2 \choose l_1}   \,  \theta_1^{\, l_1} \, .
\label{eq:mom.dim3}
\end{eqnarray}
Since ${j_i \choose l_i}=0$ for $l_i>j_i$, $i=1,2$, one has:
$$
l_2 \leq j_2 \leq n \; \Rightarrow \; \left\{\begin{array}{l} l_2=0, \, \ldots \, ,j_2 \\ \\ n=l_2, \, \ldots \, ,+\infty \end{array} \right. \, \quad 
l_1 \leq j_1 \leq n-l_2 \; \Rightarrow \; \left\{\begin{array}{l} l_1=0, \, \ldots \, ,j_1 \\ \\ n=l_1+l_2, \, \ldots \, ,+\infty \end{array} \right.
$$
so that Eq.~(\ref{eq:mom.dim3}) is equivalent to:
\begin{eqnarray*}
\lefteqn{\mathbb{E}\left[\left(X''_1\right)^{r_1} \left(X''_2\right)^{r_2}\right]=\frac{\Gamma\left(\alpha^+\right)}{_0F_1\left(\alpha^+;\frac{\lambda^+}{4}\right)} \cdot}\\ 
& \cdot & \sum_{j_1=0}^{r_1} {r_1 \choose j_1} j_1 ! \left(\alpha_1-1\right)_{r_1-j_1} \, \sum_{l_1=0}^{j_1} \frac{\theta_1^{\, l_1}}{l_1 !} \, {j_1 \choose l_1} \, \sum_{j_2=0}^{r_2} {r_2 \choose j_2} j_2 ! \left(\alpha_2-1\right)_{r_2-j_2} \cdot \\
& \cdot & \sum_{l_2=0}^{j_2} \frac{\theta_2^{\, l_2}}{l_2 !} \, {j_2 \choose l_2} \sum_{n=l_1+l_2}^{+\infty} \frac{\left(\frac{\lambda^+}{4}\right)^n}{\left(n-\, l_1-\, l_2\right) ! \; \Gamma\left(\alpha^++r^++n\right)} \, ;
\end{eqnarray*}
then, by Eqs.~(\ref{eq:poch.symb}) and~(\ref{eq:f01}), setting $k=n-(l_1+l_2) \Leftrightarrow n=k+(l_1+l_2)$ yields:
\begin{eqnarray*}  
& = & \Gamma\left(\alpha^+\right) \, \sum_{j_1=0}^{r_1} {r_1 \choose j_1} j_1 ! \left(\alpha_1-1\right)_{r_1-j_1} \, \sum_{l_1=0}^{j_1} \frac{\left(\frac{\lambda_1}{4}\right)^{l_1} {j_1 \choose l_1}}{l_1 !}  \, \sum_{j_2=0}^{r_2} {r_2 \choose j_2} j_2 ! \left(\alpha_2-1\right)_{r_2-j_2} \cdot \\
& \cdot & \sum_{l_2=0}^{j_2} \frac{\left(\frac{\lambda_2}{4}\right)^{l_2} \, {j_2 \choose l_2}}{l_2 ! \; \Gamma\left(\alpha^++r^++l_1+l_2\right)} \,  \sum_{k=0}^{+\infty} \frac{\left(\frac{\lambda^+}{4}\right)^k}{k ! \, \left(\alpha^++r^++l_1+l_2\right)_k} \frac{1}{_0F_1\left(\alpha^+;\frac{\lambda^+}{4}\right)}= \\
& = & \Gamma\left(\alpha^+\right) \sum_{j_1=0}^{r_1} {r_1 \choose j_1} j_1 ! \left(\alpha_1-1\right)_{r_1-j_1} \, \sum_{l_1=0}^{j_1} \frac{\left(\frac{\lambda_1}{4}\right)^{l_1} {j_1 \choose l_1}}{l_1 !}  \, \sum_{j_2=0}^{r_2} {r_2 \choose j_2} j_2 ! \left(\alpha_2-1\right)_{r_2-j_2} \cdot \\
& \cdot & \sum_{l_2=0}^{j_2} \frac{\left(\frac{\lambda_2}{4}\right)^{l_2} \, {j_2 \choose l_2}}{l_2 ! \; \Gamma\left(\alpha^++r^++l_1+l_2\right)} \,  \frac{_0F_1\left(\alpha^++r^++l_1+l_2;\frac{\lambda^+}{4}\right)}{_0F_1\left(\alpha^+;\frac{\lambda^+}{4}\right)}
\end{eqnarray*}
and, by observing that:
$$
\left\{\begin{array}{l} j_i=0, \, \ldots \, , r_i \\ \\ l_i=0, \, \ldots \, , j_i \end{array} \right. \quad\Rightarrow \quad 0 \, \leq \, l_i \, \leq \, j_i \, \leq \, r_i \quad \Rightarrow \quad 	\left\{\begin{array}{l} l_i=0, \, \ldots \, ,r_i \\ \\ j_i=l_i, \, \ldots \, ,r_i\end{array} \right. \, ,
$$
one can obtain: 
\begin{eqnarray}
\lefteqn{\mathbb{E}\left[\left(X''_1\right)^{r_1} \left(X''_2\right)^{r_2}\right]=} \nonumber \\
& = & \Gamma\left(\alpha^+\right) \sum_{l_1=0}^{r_1} \frac{\left(\frac{\lambda_1}{4}\right)^{l_1}}{\left(l_1 !\right)^2} \, \sum_{l_2=0}^{r_2} \frac{\left(\frac{\lambda_2}{4}\right)^{l_2} \, _0F_1\left(\alpha^++r^++l_1+l_2;\frac{\lambda^+}{4}\right)}{\left(l_2 !\right)^2 \, \Gamma\left(\alpha^++r^++l_1+l_2\right) \, _0F_1\left(\alpha^+;\frac{\lambda^+}{4}\right)} \cdot \nonumber \\
& \cdot & \sum_{j_1=l_1}^{r_1} {r_1 \choose j_1} \left(\alpha_1-1\right)_{r_1-j_1} \frac{\left(j_1 !\right)^2}{\left(j_1-l_1\right)!}  \, \sum_{j_2=l_2}^{r_2} {r_2 \choose j_2} \left(\alpha_2-1\right)_{r_2-j_2} \frac{\left(j_2 !\right)^2}{\left(j_2-l_2\right)!} \, .
\label{eq:mom.dim4}
\end{eqnarray}
By setting $p_i=j_i-l_i \Leftrightarrow j_i=p_i+l_i$, $i=1,2$ and by Eq.~(\ref{eq:poch.symb.binom}), each of the two final sums in Eq.~(\ref{eq:mom.dim4}) turns out to be tantamount to:
\begin{eqnarray}
\lefteqn{\sum_{j_i=l_i}^{r_i} {r_i \choose j_i} \left(\alpha_i-1\right)_{r_i-j_i} \frac{\left(j_i !\right)^2}{\left(j_i-l_i\right)!}=} \nonumber \\
& = & \sum_{p_i=0}^{r_i-l_i} {r_i \choose p_i+l_i} \left(\alpha_i-1\right)_{r_i-p_i-l_i} \frac{\left[\left(p_i+l_i\right)!\right]^2}{p_i!}=\nonumber \\
& = & \frac{r_i !}{\left(r_i-l_i\right)!}\sum_{p_i=0}^{r_i-l_i} {r_i-l_i \choose p_i} \left(\alpha_i-1\right)_{r_i-p_i-l_i} \, \left(l_i+p_i\right)!= \nonumber \\
& = & \frac{r_i ! \, l_i !}{\left(r_i-l_i\right)!}\sum_{p_i=0}^{r_i-l_i} {r_i-l_i \choose p_i} \left(\alpha_i-1\right)_{r_i-p_i-l_i} \, \left(l_i+1\right)_{p_i}=\nonumber \\
& = & \frac{r_i ! \, l_i !}{\left(r_i-l_i\right)!}\left[\left(\alpha_i-1\right)+\left(l_i+1\right)\right]_{r_i-l_i}=\frac{r_i ! \, l_i !}{\left(r_i-l_i\right)!} \left(\alpha_i+l_i\right)_{r_i-l_i} \, ;
\label{eq:mom.dim5}
\end{eqnarray}
hence, by Eq.~(\ref{eq:mom.dim5}), Eq.~(\ref{eq:mom.dim4}) can be stated in the following form:
\begin{eqnarray}
\lefteqn{\mathbb{E}\left[\left(X''_1\right)^{r_1} \left(X''_2\right)^{r_2}\right]=} \nonumber \\
& = & \Gamma\left(\alpha^+\right) \cdot \sum_{l_1=0}^{r_1} \sum_{l_2=0}^{r_2}  \left[ \frac{{r_1 \choose l_1} \, {r_2 \choose l_2} \left(\frac{\lambda_1}{4}\right)^{l_1} \left(\frac{\lambda_2}{4}\right)^{l_2} \, \left(\alpha_1+l_1\right)_{r_1-l_1} \, \left(\alpha_2+l_2\right)_{r_2-l_2}}{\Gamma\left(\alpha^++r^++l_1+l_2\right)} \cdot  \right. \nonumber \\
& \cdot & \left. \frac{_0F_1\left(\alpha^++r^++l_1+l_2;\frac{\lambda^+}{4}\right)}{_0F_1\left(\alpha^+;\frac{\lambda^+}{4}\right)} \right] \, .
\label{eq:mom.dim6}
\end{eqnarray}
Eq.~(\ref{eq:mom.dim6}) can be finally exhibited in the form reported in Eq.~(\ref{eq:cncdir.momr1r2}) by noting that:
\begin{eqnarray*}
\Gamma\left(\alpha^++r^++l_1+l_2\right) & = & \Gamma\left(\alpha^+\right) \, \left(\alpha^+\right)_{r^+} \, \left(\alpha^++r^+\right)_{l_1+l_2} \,  ,\\
\left(\alpha_i+l_i\right)_{r_i-l_i} & = & \frac{\left(\alpha_i\right)_{r_i}}{\left(\alpha_i\right)_{l_i}} \, \qquad i=1,2
\end{eqnarray*}
in light of Eqs.~(\ref{eq:poch.symb}),~(\ref{eq:poch.symb.sum}) and~(\ref{eq:poch.symb.ratio}).
\end{proof}

Hence, the formula of the mixed raw moment of order $(1,1)$ of the $\mbox{\normalfont{CNcDir}}^{\, 2}$ distribution can be obtained by taking $r_1=r_2=1$ in Eq.~(\ref{eq:cncdir.momr1r2}) as follows:
\begin{eqnarray}
\mathbb{E}\left(X''_1 \, X''_2\right) & = & \frac{\alpha_1 \, \alpha_2}{\left(\alpha^+\right)_2} \frac{_0F_1\left(\alpha^++2;\frac{\lambda^+}{4}\right)}{_0F_1\left(\alpha^+;\frac{\lambda^+}{4}\right)}+\frac{\alpha_1 \, \frac{\lambda_2}{4}+ \alpha_2 \, \frac{\lambda_1}{4}}{\left(\alpha^+\right)_3} \, \frac{_0F_1\left(\alpha^++3;\frac{\lambda^+}{4}\right)}{_0F_1\left(\alpha^+;\frac{\lambda^+}{4}\right)}+ \nonumber \\
& + & \frac{\frac{\lambda_1}{4} \, \frac{\lambda_2}{4}}{\left(\alpha^+\right)_4} \frac{_0F_1\left(\alpha^++4;\frac{\lambda^+}{4}\right)}{_0F_1\left(\alpha^+;\frac{\lambda^+}{4}\right)} \, .
\label{eq:cncdir.mom11}
\end{eqnarray}
This latter can be algebraically manipulated with the aim to reduce the number of distinct functions $\, _0F_1$ appearing in it. More precisely, in carrying out the computations, reference must be made to the recurrence identity
$$_0F_1\left(\alpha^++2;\frac{\lambda^+}{4}\right)= \, _0F_1\left(\alpha^++3;\frac{\lambda^+}{4}\right)+\frac{\frac{\lambda^+}{4}}{\left(\alpha^++2\right)\left(\alpha^++3\right)} \, _0F_1\left(\alpha^++4;\frac{\lambda^+}{4}\right) \, ,$$
which holds for consecutive values of the denominator parameter of $\, _0F_1$ and is easy to verify upon noting that, by Eq.~(\ref{eq:poch.symb.sum}), $\left(\alpha^++2\right)_i \left(\alpha^++2+i\right)=\left(\alpha^++2\right) \left(\alpha^++3\right)_i$ for every $i \in \mathbb{N} \cup \{0\}$. Specifically, the aforementioned improvement of Eq.~(\ref{eq:cncdir.mom11}) depends on three functions $_0F_1$ instead of four and is given by 
\begin{eqnarray*}
\mathbb{E}\left(X''_1 \, X''_2\right) & = & \frac{\alpha_1 \, \alpha_2+\frac{\lambda_1 \, \lambda_2}{4 \, \lambda^+}}{\left(\alpha^+\right)_2} \frac{_0F_1\left(\alpha^++2;\frac{\lambda^+}{4}\right)}{_0F_1\left(\alpha^+;\frac{\lambda^+}{4}\right)}+ \nonumber \\
& + & \frac{\alpha_1 \, \frac{\lambda_2}{4}+ \alpha_2 \, \frac{\lambda_1}{4}-\frac{\lambda_1 \, \lambda_2}{4 \, \lambda^+}\left(\alpha^++2\right)}{\left(\alpha^+\right)_3} \, \frac{_0F_1\left(\alpha^++3;\frac{\lambda^+}{4}\right)}{_0F_1\left(\alpha^+;\frac{\lambda^+}{4}\right)} \, .
%\label{eq:cncdir.mom11.improv}
\end{eqnarray*}

% ***************************************************************************************************************************************************
% ********************************************************** 3.6 IDENTIFIABILITY ********************************************************************
% ***************************************************************************************************************************************************
\subsection{Identifiability}
\label{subsec:cncdir.identif}

Before facing the applicative issues included in the next section, in the present subsection we briefly discuss the identifiability of the $\mbox{\normalfont{CNcDir}}$ model.

% **************************************************************************************************************************************************
% ***************************************************** PROPOSITION 3.16: IDENTIFIABILITY **********************************************************
% **************************************************************************************************************************************************
\begin{proposition}[Identifiability]
\label{propo:cncdir.identif}
Let $\underline{X}''=\left(X''_1, \, \ldots \, , X''_D \right) \sim \mbox{\normalfont{CNcDir}}^{\, D}\left(\underline{\alpha},\underline{\lambda}\right)$ and $\underline{\tilde{X}}''=(\tilde{X}''_{1}, \, \ldots \, , \tilde{X}''_{D}) \sim \mbox{\normalfont{CNcDir}}^{\, D}(\underline{\tilde{\alpha}},$ $\underline{\tilde{\lambda}})$ where $\underline{\alpha}=\left(\alpha_1, \, \ldots \, ,\alpha_{D+1}\right)$, $\underline{\tilde{\alpha}}=\left(\tilde{\alpha}_1, \, \ldots \, ,\tilde{\alpha}_{D+1}\right)$, $\underline{\lambda}=\left(\lambda_1, \, \ldots \, ,\lambda_{D+1}\right)$ and $\underline{\tilde{\lambda}}=(\tilde{\lambda}_1, \, \ldots \, , \tilde{\lambda}_{D+1})$. Then, $\underline{X}'' \sim \underline{\tilde{X}}''$ if and only if $\underline{\alpha}=\underline{\tilde{\alpha}}$ and $\underline{\lambda}=\underline{\tilde{\lambda}}$.
\end{proposition}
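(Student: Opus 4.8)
The plan is to establish only the ``only if'' direction, the converse being immediate since the law of $\underline{X}''$ is entirely determined by $\left(\underline{\alpha},\underline{\lambda}\right)$ through the density in Eq.~\eqref{eq:cncdir.perturb.dens}. So I assume that $\underline{X}''$ and $\underline{\tilde{X}}''$ possess the same density on $\mathcal{S}^{\, D}$ and work towards $\underline{\alpha}=\underline{\tilde{\alpha}}$ and $\underline{\lambda}=\underline{\tilde{\lambda}}$.

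First I would recover the shape parameters. In the perturbation form \eqref{eq:cncdir.perturb.dens} the density equals $\mbox{Dir}^{\, D}\left(\underline{x};\underline{\alpha}\right)$ times a product of factors $_0F_1$ which, since $_0F_1\left(b;0\right)=1$, are continuous and strictly positive up to the boundary of the simplex. Hence, letting $x_i\rightarrow 0^+$ (for $i\le D$) while the remaining coordinates stay at a fixed interior point, the perturbing factor tends to a finite positive constant, so the density is asymptotic to a positive constant times $x_i^{\,\alpha_i-1}$. Imposing that the two densities coincide, their ratio equals $1$ throughout $\mathcal{S}^{\, D}$; as $x_i\rightarrow 0^+$ this ratio reduces to a positive constant times $x_i^{\,\alpha_i-\tilde{\alpha}_i}$, which stays bounded and bounded away from $0$ only if $\alpha_i=\tilde{\alpha}_i$. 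The same reasoning along the face $1-\sum_{i=1}^{D}x_i\rightarrow 0^+$ gives $\alpha_{D+1}=\tilde{\alpha}_{D+1}$, so that $\underline{\alpha}=\underline{\tilde{\alpha}}$ and in particular $\alpha^+=\tilde{\alpha}^+$.

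With $\underline{\alpha}=\underline{\tilde{\alpha}}$ the two Dirichlet factors cancel. Putting $z_i=x_i$ for $i\le D$ and $z_{D+1}=1-\sum_{i=1}^{D}x_i$, so that $\left(z_1,\ldots,z_{D+1}\right)$ ranges over the open simplex $\{z_i>0,\ \sum_i z_i=1\}$, Eq.~\eqref{eq:cncdir.perturb.dens} turns the hypothesis into
\begin{equation*}
\frac{1}{_0F_1\left(\alpha^+;\frac{\lambda^+}{4}\right)}\prod_{i=1}^{D+1}{}_0F_1\left(\alpha_i;\frac{\lambda_i}{4}z_i\right)=\frac{1}{_0F_1\left(\alpha^+;\frac{\tilde{\lambda}^+}{4}\right)}\prod_{i=1}^{D+1}{}_0F_1\left(\alpha_i;\frac{\tilde{\lambda}_i}{4}z_i\right).
\end{equation*}
Taking logarithms and setting $G_i(z)=\log {}_0F_1\left(\alpha_i;\frac{\lambda_i}{4}z\right)-\log {}_0F_1\left(\alpha_i;\frac{\tilde{\lambda}_i}{4}z\right)$, a real-analytic function with $G_i(0)=0$, this reads $\sum_{i=1}^{D+1}G_i(z_i)=c$ on the simplex, for some constant $c$. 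To solve it for $D\ge 2$, I would differentiate with respect to $z_j$ ($j\le D$) keeping $z_{D+1}=1-\sum_{i\le D}z_i$, obtaining $G_j'(z_j)=G_{D+1}'(z_{D+1})$. Because $D\ge 2$, one can vary $z_j$ while holding $z_{D+1}$ fixed by transferring mass to another coordinate; hence $G_j'$ is constant, each $G_i$ is affine and, with $G_i(0)=0$, of the form $G_i(z)=\kappa z$ for one common constant $\kappa$. An affine function has vanishing quadratic Taylor coefficient, so comparing the $z^2$-coefficients of the two logarithms at $z=0$, computed from the series \eqref{eq:f01}, forces $\lambda_i^{\,2}=\tilde{\lambda}_i^{\,2}$ and therefore $\lambda_i=\tilde{\lambda}_i$ by non-negativity of the non-centrality parameters (the linear coefficient then yields $\kappa=0$, consistently). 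The univariate case $D=1$ falls outside this argument and coincides with the identifiability of the marginal $\mbox{CDNcB}$ distribution of Proposition~\ref{propo:cncdir.margs}, for which I would invoke \cite{Ors21}.

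I expect the delicate step to be the passage from the single scalar identity $\sum_i G_i(z_i)=c$ to the separate conclusions $\lambda_i=\tilde{\lambda}_i$: the differentiation decouples the variables only because $D\ge 2$ permits moving probability mass between two coordinates while fixing the last one, whereas the genuinely one-dimensional situation must be imported from the CDNcB analysis. A shortcut worth noting is to bypass this step altogether by applying closure under marginalization (Proposition~\ref{propo:cncdir.margs}) to reduce at once to CDNcB identifiability, at the price of leaning on the external result of \cite{Ors21}.
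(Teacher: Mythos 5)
Your proof is correct, but it takes a genuinely different route from the paper's. The paper's entire proof is the ``shortcut'' you mention in your closing remark: equality in distribution implies equality of the univariate marginals, which by Eq.~(\ref{eq:cncdir.unidim.marg}) are $\mbox{CDNcB}\left(\alpha_i,\alpha^+-\alpha_i,\lambda_i,\lambda^+-\lambda_i\right)$, so the identifiability of the CDNcB model (\cite{Ors21}, Section 7) yields $\alpha_i=\tilde{\alpha}_i$, $\lambda_i=\tilde{\lambda}_i$, $\alpha^+=\tilde{\alpha}^+$ and $\lambda^+=\tilde{\lambda}^+$ for every $i=1,\ldots,D$, hence also agreement of the $(D+1)$-th components; this is uniform in $D\geq 1$ but rests entirely on the external result. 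Your argument instead works directly on the perturbation representation in Eq.~(\ref{eq:cncdir.perturb.dens}): face asymptotics of the density recover $\underline{\alpha}$, and the functional equation $\sum_{i=1}^{D+1}G_i\left(z_i\right)=c$ on the simplex --- solved by differentiating, using $D\geq 2$ to vary one coordinate while another absorbs the mass, invoking analyticity to globalize the constancy of $G_j'$, and reading off the $z^2$ Taylor coefficient $-\left(\lambda_i^{\,2}-\tilde{\lambda}_i^{\,2}\right)\big/\left(32\,\alpha_i^{\,2}\left(\alpha_i+1\right)\right)$ of $G_i$ --- recovers $\underline{\lambda}$, with non-negativity turning $\lambda_i^{\,2}=\tilde{\lambda}_i^{\,2}$ into $\lambda_i=\tilde{\lambda}_i$. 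Both steps check out: the boundary-ratio argument is sound because the $_0F_1$ factors are continuous and strictly positive up to the boundary, and the decoupling genuinely requires $D+1\geq 3$ coordinates, which you correctly flag. What your approach buys is a self-contained proof for $D\geq 2$ that also exposes the mechanism behind identifiability; what it costs is the case split, since $D=1$ must still be imported from \cite{Ors21} --- precisely the dependence the paper accepts wholesale in exchange for a three-line proof.
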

\begin{proof}
We need to show that if $\underline{X}'' \sim \underline{\tilde{X}}''$ then $\underline{\alpha}=\underline{\tilde{\alpha}}$ and $\underline{\lambda}=\underline{\tilde{\lambda}}$, the converse being obvious. Clearly, if $\underline{X}'' \sim \underline{\tilde{X}}''$, then $X''_i \sim \tilde{X}''_{i}$ for every $i=1, \, \ldots \, ,D$. By Eq.~(\ref{eq:cncdir.unidim.marg}), $X''_i \sim \mbox{\normalfont{CDNcB}}(\alpha_i,\alpha^+-\alpha_i,\lambda_i,\lambda^+-\lambda_i)$ and $\tilde{X}''_i \sim \mbox{\normalfont{CDNcB}}(\tilde{\alpha}_i,\tilde{\alpha}^+-\tilde{\alpha}_i,\tilde{\lambda}_i,\tilde{\lambda}^+-\tilde{\lambda}_i)$; therefore, the proof follows from the identifiability of the CDNcB model (\cite{Ors21}, Section 7).
\end{proof}

% **************************************************************************************************************************************************
% ************************************************************* 4 APPLICATIONS *********************************************************************
% **************************************************************************************************************************************************
\section{Applications}
\label{sec:cncdir.examples}

The potential of the $\mbox{CNcDir}$ model is now illustrated by means of an application to a real data set. To this end we turned our attention to the Cartesian coordinates of the locations of 584 longleaf pine trees (\textit{Pinus palustris}) placed in a $200 \times 200$ metre region within Southern Georgia (USA). This data set was originally collected and analyzed by \cite{PlaEvaRat88}, was later quoted in \cite{RatCre94} and finally was made available by \cite{BadTurRub20} as part of the package ``spatstat.data'' in the programming environment \texttt{R}. More precisely, in a comparative perspective, the four distributions on the unit simplex considered in the present paper were fitted to the data set obtained as follows. The above region was rescaled to the unit square and cut by the diagonal joining the vertices $(1,0)$ and $(0,1)$ into two congruent triangles; hence, the aforementioned analysis was carried out on the subset of $n=346$ locations lying in the interior of the upper triangle after undergoing the transformation $(x_1,x_2) \mapsto (x'_1,x'_2)=(1-x_2,1-x_1)$ that mapped this latter into the unit simplex in $\mathbb{R}^2$ (see Figure~\ref{fig:scatter}).

\begin{figure}[htp]
  \centering
%  \psfrag{S}{\small $S$}
%  \psfrag{rho}{\small $\rho$}
  \includegraphics[width=8cm,keepaspectratio=true]{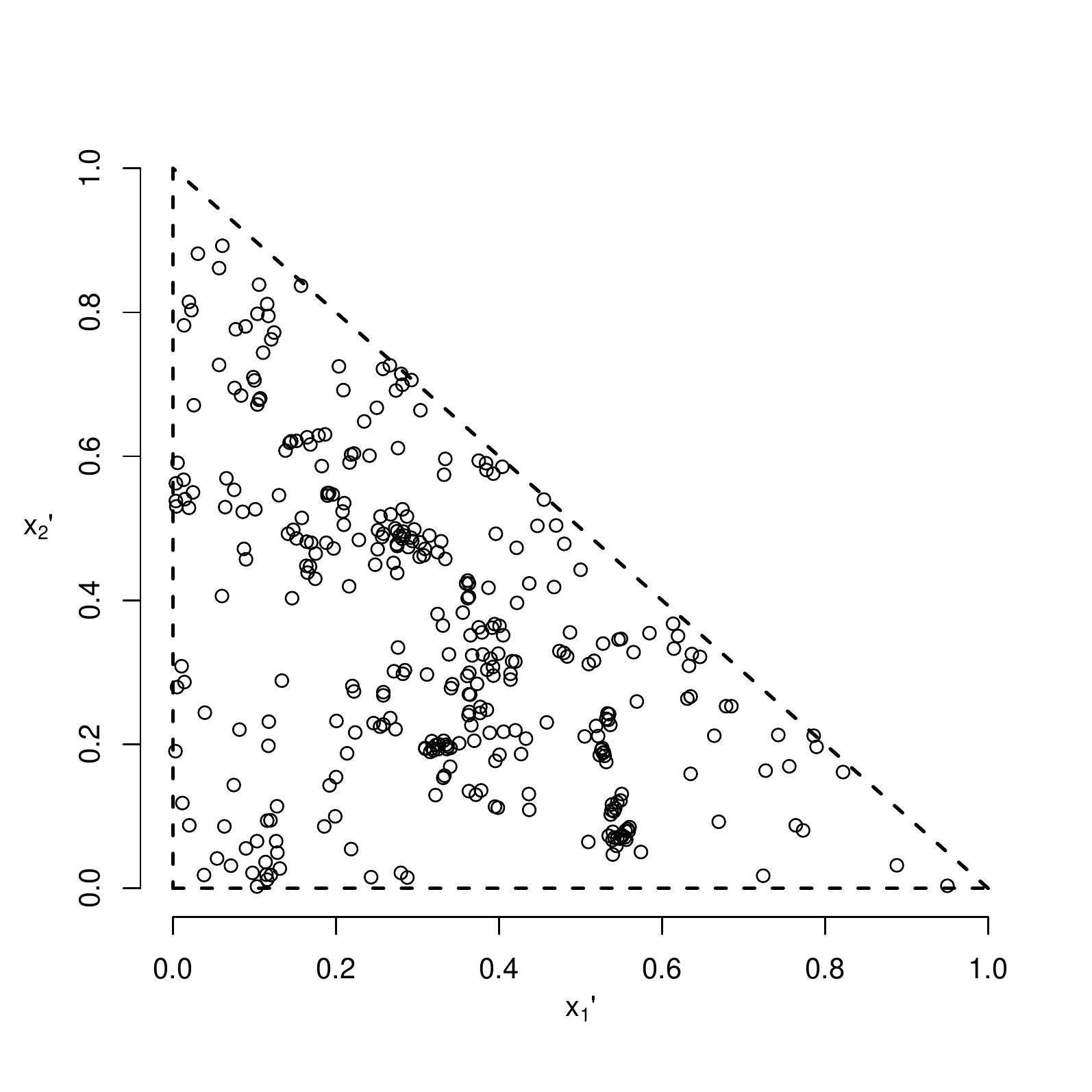}\\
%  \caption{Caption of the figure Caption of the figure Caption of the figure Caption of the figure Caption of the figure Caption of the figure Caption of the figure Caption of the figure Caption of the figure Caption of the figure }\label{fig1}
\caption{Scatterplot of the locations of $n=346$ longleaf pine trees expressed in the Cartesian coordinate system $(x'_1,x'_2)=(1-x_2,1-x_1)$ mapping the upper triangle of the unit square into the unit simplex in $\mathbb{R}^2$.}\label{fig:scatter}
\end{figure}

What is of interest to us is to check if the evidence suggests that the data portions having values next to the vertices of $\mathcal{S}^2$ are large enough to let the densities of such models take on positive and finite limits as $(x'_1,x'_2)$ tends to $(1,0)$, $(0,1)$ and $(0,0)$. Hence, the problem at study can be formalized by testing the hypotheses that all three shape parameters of each model or two of them, at the very least, are unitary.

That said, the method of Maximum-Likelihood (ML) was applied in order to derive the estimates for the parameter vectors of all the above models. In this regard, let $\left\{\underline{X}_i=\left(X_{i1},X_{i2} \right)\right\}_{i=1}^{n}$ be a sequence of independent random vectors with identical distribution depending on an unknown parameter vector $\underline{\theta}$. First, suppose that $\left(X_{i1},X_{i2} \right) \stackrel{\bot}{\sim } \mbox{\normalfont{Dir}}^{\, 2}\left(\alpha_1,\alpha_2,\alpha_3\right)$, $i=1, \, \ldots \, ,n$; by Eq.~(\ref{eq:dir.dens}), given the observed sample $\left\{ \underline{x}_i=\left(x_{i1},x_{i2} \right) \right\}_{i=1}^{n}$, the log-likelihood function for the parameter vector $\left(\alpha_1,\alpha_2,\alpha_3\right)$ of the $\mbox{\normalfont{Dir}}^{\, 2}$ model is
\begin{eqnarray}
\lefteqn{l_{\, \mbox{\normalfont{Dir}}^{\, 2}}\left(\alpha_1,\alpha_2,\alpha_3;\underline{x}_1, \, \ldots \, ,\underline{x}_n\right)=} \nonumber \\
& = & n \left[\, \log \Gamma\left(\alpha^+\right)-\log \Gamma\left(\alpha_1\right)- \log \Gamma\left(\alpha_2\right)- \log \Gamma\left(\alpha_3\right) \, \right] +\left(\alpha_1-1\right) \sum_{i=1}^{n}\log x_{i1}+ \nonumber \\
& + & \left(\alpha_2-1\right) \sum_{i=1}^{n}\log x_{i2}+\left(\alpha_3-1\right) \sum_{i=1}^{n}\log\left(1-x_{i1}-x_{i2}\right) \, .
\label{eq:loglik.dir}
\end{eqnarray}
By taking $\left(X_{i1},X_{i2} \right) \stackrel{\bot}{\sim } \mbox{\normalfont{KB}}^{\, 2}\left(\alpha_1,\alpha_2,\alpha_3,\delta\right)$, $i=1, \, \ldots \, ,n$, in light of Eqs.~(\ref{eq:kb2.dens}) and~(\ref{eq:loglik.dir}), given the observed sample $\left\{\underline{x}_i=\left(x_{i1},x_{i2} \right) \right\}_{i=1}^{n}$, the log-likelihood function for the parameter vector $\left(\alpha_1,\alpha_2,\alpha_3,\delta\right)$ of the $\mbox{\normalfont{KB}}^{\, 2}$ model is
\begin{eqnarray*}
\lefteqn{l_{\, \mbox{\normalfont{KB}}^{\, 2}}\left(\alpha_1,\alpha_2,\alpha_3,\delta;\underline{x}_1, \, \ldots \, ,\underline{x}_n\right) \quad =}\\
& = & l_{\, \mbox{\normalfont{Dir}}^{\, 2}}\left(\alpha_1,\alpha_2,\alpha_3;\underline{x}_1, \, \ldots \, , \underline{x}_n\right)-\left[\sum_{i=1}^{n}\left(x_{i1}+x_{i2}\right)\right]\delta -n\log \, _1F_1\left(\alpha_1+\alpha_2;\alpha^+;-\delta\right) \, .
\end{eqnarray*}
Now let $\left(X_{i1},X_{i2} \right) \stackrel{\bot}{\sim } \mbox{\normalfont{NcDir}}^{\, 2}\left(\alpha_1,\alpha_2,\alpha_3,\lambda_1,\lambda_2,\lambda_3\right)$, $i=1, \, \ldots \, ,n$; by Eqs.~(\ref{eq:ncdir.perturb.dens}) and~(\ref{eq:loglik.dir}), given the observed sample $\left\{\underline{x}_i=\left(x_{i1},x_{i2} \right) \right\}_{i=1}^{n}$, the log-likelihood function for the parameter vector $\left(\alpha_1,\alpha_2,\alpha_3,\lambda_1,\lambda_2,\lambda_3\right)$ of the $\mbox{\normalfont{NcDir}}^{\, 2}$ model is
\begin{eqnarray*}
\lefteqn{l_{\, \mbox{\normalfont{NcDir}}^{\, 2}}\left(\alpha_1,\alpha_2,\alpha_3,\lambda_1,\lambda_2,\lambda_3;\underline{x}_1, \, \ldots \, , \underline{x}_n\right)=l_{\, \mbox{\normalfont{Dir}}^{\, 2}}\left(\alpha_1,\alpha_2,\alpha_3;\underline{x}_1, \, \ldots \, ,\underline{x}_n\right)  +}\\
& - &  \frac{n \, \lambda^+}{2} + \sum_{i=1}^{n}\log \Psi_2^{(3)}\left[\alpha^+;\alpha_1,\alpha_2,\alpha_3;\frac{\lambda_1}{2}x_{i1},\frac{\lambda_2}{2}x_{i2},\frac{\lambda_{3}}{2}\left(1-x_{i1}-x_{i2}\right)\right] \, ,
\end{eqnarray*}
where, in light of Eqs.~(\ref{eq:poch.symb.sum}) and~(\ref{eq:ncdir.perturb}), by means of simple computations, the above specified function $\Psi_2^{\, (3)}$ can be expressed as
\begin{eqnarray}
\lefteqn{\Psi_2^{\, (3)}\left[\alpha^+;\alpha_1,\alpha_2,\alpha_3;\frac{\lambda_1}{2}x_1,\frac{\lambda_2 }{2}x_2,\frac{\lambda_{3}}{2}\left(1-x_1-x_2\right)\right]=} \nonumber \\
& = & \sum_{j_1=0}^{+\infty} \frac{\left(\alpha^+\right)_{j_1}}{\left(\alpha_1\right)_{j_1}} \frac{\left(\frac{\lambda_1}{2}x_1\right)^{j_1}}{j_1!} \,  \Psi_2\left[\alpha^++j_1;\alpha_2,\alpha_3;\frac{\lambda_2}{2}x_2,\frac{\lambda_3 }{2}\left(1-x_1-x_2\right)\right]= \nonumber \\
& = & \sum_{j_1=0}^{+\infty} \frac{\left(\alpha^+\right)_{j_1}}{\left(\alpha_1\right)_{j_1}} \frac{\left(\frac{\lambda_1}{2}x_1\right)^{j_1}}{j_1!} \sum_{j_2=0}^{+\infty} \frac{\left(\alpha^++j_1\right)_{j_2}}{\left(\alpha_2\right)_{j_2}} \frac{\left(\frac{\lambda_2}{2}x_2\right)^{j_2}}{j_2!} \cdot \nonumber \\
& \cdot & _1F_1\left[\alpha^++j_1+j_2;\alpha_3;\frac{\lambda_3 }{2}\left(1-x_1-x_2\right)\right] \, ,
\label{eq:ncdir.perturb.altern}
\end{eqnarray}
i.e. as a doubly infinite sum of weighted Kummer's confluent hypergeometric functions. This formula can be usefully adopted as a natural basis for implementing such a function in any statistical package where the generalized hypergeometric function $_1F_1$ is already implemented. In this regard, the code of a possible implementation in $\texttt{R}$ of the algorithm in Eq.~(\ref{eq:ncdir.perturb.altern}) is proposed in Section~\ref{sec:r.funcs}. Similar reasonings clearly apply as far as the computation of the function $\Psi_2^{\, (m)}$ with $m>3$ is concerned.

\noindent Finally, let $\left(X_{i1},X_{i2} \right) \stackrel{\bot}{\sim } \mbox{\normalfont{CNcDir}}^{\, 2}\left(\alpha_1,\alpha_2,\alpha_3,\lambda_1,\lambda_2,\lambda_3\right)$, $i=1, \, \ldots \, ,n$; by Eqs.~(\ref{eq:cncdir.perturb.dens}) and~(\ref{eq:loglik.dir}), given the observed sample $\left\{\underline{x}_i=\left(x_{i1},x_{i2} \right) \right\}_{i=1}^{n}$, the log-likelihood function for the parameter vector $\left(\alpha_1,\alpha_2,\alpha_3,\lambda_1,\lambda_2,\lambda_3\right)$ of the $\mbox{\normalfont{CNcDir}}^{\, 2}$ model is
\begin{eqnarray*}
\lefteqn{l_{\, \mbox{\normalfont{CNcDir}}^{\, 2}}\left(\alpha_1,\alpha_2,\alpha_3,\lambda_1,\lambda_2,\lambda_3;\underline{x}_1, \, \ldots \,, \underline{x}_n\right)=}\\
& + & l_{\, \mbox{\normalfont{Dir}}^{\, 2}}\left(\alpha_1,\alpha_2,\alpha_3;\underline{x}_1, \, \ldots \, ,\underline{x}_n\right)+\sum_{i=1}^{n}\log \, _0F_1 \left(\alpha_1;\frac{\lambda_1}{4}x_{i1}\right)+\\
& + & \sum_{i=1}^{n}\log \, _0F_1 \left(\alpha_2;\frac{\lambda_2}{4}x_{i2}\right)+\sum_{i=1}^{n}\log \, _0F_1 \left[\alpha_3;\frac{\lambda_3}{4}\left(1-x_{i1}-x_{i2}\right)\right]+\\
& - & n \log \, _0F_1 \left(\alpha^+;\frac{\lambda^+}{4}\right) \, .
\end{eqnarray*}

Under well known regularity conditions, the ML estimator $\underline{\hat{\Theta}}$ for the parameter vector $\underline{\theta}$ has an asymptotic multivariate Normal distribution with mean vector $\underline{\theta}$ and covariance matrix which can be approximated by the inverse of the observed information matrix $I(\underline{\hat{\theta}};\, \underline{x})=\{-\partial^{\, 2} l(\underline{\theta};\, \underline{x}) \, / \, \partial \underline{\theta} \, \partial \underline{\theta}^{T} \}_{\underline{\theta}=\underline{\hat{\theta}}}$, $\underline{\hat{\theta}}$ being the ML estimate for $\underline{\theta}$. The maximization of the above log-likelihoods was numerically accomplished by using the built-in-function \texttt{optim} available in the statistical software \texttt{R}. The global maximum of the log-likelihoods is ensured to be actually achieved by using a wide range of starting values for the estimates of the parameters in the optimization algorithm.

The null hypotheses ($H_0$) $\alpha_1=\alpha_2=\alpha_3=1$, $\alpha_1=\alpha_2=1$, $\alpha_1=\alpha_3=1$ and $\alpha_2=\alpha_3=1$ are issued towards all the models at study with the aim to verify if the tails of the data corresponding to all three vertices of the unit simplex or to a few couples of them are valuable enough to be captured by the densities of the aforementioned distributions. If all the above hypotheses are rejected, the model with all the parameters to be estimated is chosen; otherwise, in case of non-rejection of at least one of them, the model corresponding to the highest $p$-value is selected amongst the non-rejected ones with fewest parameters. Such hypotheses can be easily verified by means of a suitable Likelihood Ratio (LR) test with asymptotic Chi-Squared distribution. The construction of such LR test requires the unconstrained maximization of the log-likelihoods as well as the constrained maximization of these latter under the hypotheses of interest. Specifically, the LR statistic is given by $w=-2(\hat{l}_0-\hat{l})$, where $\hat{l}_0$ and $\hat{l}$ are, respectively, the maximum of the log-likelihood under $H_0$ and the maximum of the unrestricted log-likelihood. This statistic can be used to check if the fit of the models with two or three shape parameters set equal to 1 is statistically superior to the one of the model containing all the parameters. According to Wilks' theorem \cite{Wil38}, on varying the sample, $w$ describes the random variable $W$, the distribution of which has been approximated by a Chi-Squared with number of degrees of freedom equal to the number of parameters fixed by $H_0$.

Details of the LR tests carried out herein are listed in Table~\ref{tab:lr.test}, which indicates that the CNcDir and the NcDir models are the only for which the null hypothesis $\alpha_1=\alpha_2=\alpha_3=1$ is not rejected with the highest $p-$value; therefore, their densities allow for the tails of the data corresponding to all three vertices of $\mathcal{S}^2$ to be captured. On the contrary, none of the four hypotheses at study is significantly supported by the data with reference to the remaining models; hence, all three shape parameters of these models are estimated. In this regard, the ML estimates for the parameters of the chosen models and the asymptotic standard errors of the corresponding estimators are reported in Table~\ref{tab:mle}.

\begin{table}[htp]
\centering
\small
\caption{Likelihood Ratio (LR) tests to evaluate the null hypotheses ($H_0$) that all three or two of the shape parameters $\alpha_1, \, \alpha_2, \, \alpha_3$ are equal to 1 for the bivariate $\mbox{\normalfont{CNcDir}}$, $\mbox{\normalfont{NcDir}}$, $\mbox{\normalfont{Dir}}$ and $\mbox{\normalfont{KB}}$ models: maximum value of the log-likelihood under $H_0$ ($\hat{l}_0$), maximum value of the unrestricted log-likelihood ($\hat{l}$), LR statistic ($w$), degrees of freedom (df) of the corresponding approximated $\chi^2$ distribution and $p$-value. The $p$-values corresponding to the chosen models are written in bold ($n=346$).}\label{tab:lr.test}
\begin{tabular}{c||c|c|c|c|c|c}
Model & $H_0$ & $\hat{l}_0$ & $\hat{l}$ & $w$ & df & $p$-value \\ \hline \hline
\multirow{4}{1.2cm}{$\mbox{\normalfont{CNcDir}}^{\, 2}$} & $\alpha_1=\alpha_2=\alpha_3=1$ & 262.0291 & \multirow{4}{1.2cm}{262.7820} & 1.5058 & 3 & \textbf{0.6809}\\
\cline{2-3} \cline{5-7} & $\alpha_1=\alpha_2=1$ & 262.3874 & & 0.7892 & \multirow{3}{0.2cm}{2} & 0.6739 \\
\cline{2-3} \cline{5-5} \cline{7-7} & $\alpha_1=\alpha_3=1$ & 262.0826 & & 1.3988 & & 0.4969 \\ 
\cline{2-3} \cline{5-5} \cline{7-7} & $\alpha_2=\alpha_3=1$ & 262.1861 & & 1.1918 & & 0.5511 \\ \hline \hline
\multirow{4}{1.0cm}{$\mbox{\normalfont{NcDir}}^{\, 2}$}  & $\alpha_1=\alpha_2=\alpha_3=1$ & 262.7941 & \multirow{4}{1.2cm}{263.8433} & 2.0984  & 3 & \textbf{0.5522} \\
\cline{2-3} \cline{5-7} & $\alpha_1=\alpha_2=1$ & 263.0525 & & 1.5816 & \multirow{3}{0.2cm}{2} & 0.4535 \\
\cline{2-3} \cline{5-5} \cline{7-7} & $\alpha_1=\alpha_3=1$ & 262.9253 & & 1.8360 & & 0.3993 \\ 
\cline{2-3} \cline{5-5} \cline{7-7} & $\alpha_2=\alpha_3=1$ & 263.1430 & & 1.4006 & & 0.4964 \\ \hline \hline
\multirow{4}{0.6cm}{$\mbox{\normalfont{Dir}}^{\, 2}$} & $\alpha_1=\alpha_2=\alpha_3=1$ & 239.8289 & \multirow{4}{1.2cm}{255.2603} & 30.8628 & 3 & \multirow{4}{1.2cm}{$<.0001$} \\
\cline{2-3} \cline{5-6} & $\alpha_1=\alpha_2=1$ & 240.7546 & & 29.0114 & \multirow{3}{0.2cm}{2} &  \\
\cline{2-3} \cline{5-5} & $\alpha_1=\alpha_3=1$ & 243.9976 & & 22.5254 & &  \\ 
\cline{2-3} \cline{5-5} & $\alpha_2=\alpha_3=1$ & 240.3968 & & 29.7270 & &  \\ \hline \hline
\multirow{4}{0.6cm}{$\mbox{\normalfont{KB}}^{\, 2}$} & $\alpha_1=\alpha_2=\alpha_3=1$ & 239.8813 & \multirow{4}{1.2cm}{255.3020} & 30.8414 & 3 & \multirow{4}{1.2cm}{$<.0001$} \\
\cline{2-3} \cline{5-6} & $\alpha_1=\alpha_2=1$ & 245.2593 & & 20.0854 & \multirow{3}{0.2cm}{2} &  \\
\cline{2-3} \cline{5-5} & $\alpha_1=\alpha_3=1$ & 244.5826 & & 21.4388 & &  \\ 
\cline{2-3} \cline{5-5} & $\alpha_2=\alpha_3=1$ & 240.4078 & & 29.7884 & &  
\end{tabular}
\end{table}  

\begin{table}[htp]
\centering
\small
\caption{Maximum-Likelihood (ML) estimates for the parameters of the bivariate $\mbox{\normalfont{CNcDir}}$, $\mbox{\normalfont{NcDir}}$, $\mbox{\normalfont{Dir}}$ and $\mbox{\normalfont{KB}}$ models selected on the basis of the LR tests carried out in Table \ref{tab:lr.test} and asymptotic standard errors (SE) of the corresponding estimators between round brackets ($n=346$).}\label{tab:mle}
\begin{tabular}{c|c|c||c|c|c}
\multicolumn{2}{c| }{\mbox{Model}} & ML estimate (SE) & \multicolumn{2}{c| }{\mbox{Model}} & ML estimate (SE) \\ \hline \hline
\multirow{6}{1.2cm}{$\mbox{\normalfont{CNcDir}}^{\, 2}$} & $\alpha_1$ & 1 & \multirow{6}{0.9cm}{$\mbox{\normalfont{NcDir}}^{\, 2}$} & $\alpha_1$ & 1\\
\cline{2-3} \cline{5-6} & $\alpha_2$ & 1 & & $\alpha_2$  & 1 \\
\cline{2-3} \cline{5-6} & $\alpha_3$ & 1 & & $\alpha_3$  & 1 \\
\cline{2-3} \cline{5-6} & $\hat{\lambda_1}$ & 42.7802 (8.5087) & & $\hat{\lambda_1}$ & 3.0478 (0.4093) \\ 
\cline{2-3} \cline{5-6} & $\hat{\lambda_2}$ & 48.7569 (9.3385) & & $\hat{\lambda_2}$ & 3.4644 (0.4382) \\
\cline{2-3} \cline{5-6} & $\hat{\lambda_3}$ & 44.1538 (8.6944) & & $\hat{\lambda_3}$ & 3.1084 (0.4121) \\ \hline
\multirow{4}{0.5cm}{$\mbox{\normalfont{Dir}}^{\, 2}$} & $\hat{\alpha_1}$ & 1.2671 (0.0717) & \multirow{4}{0.5cm}{$\mbox{\normalfont{KB}}^{\, 2}$} &  $\hat{\alpha_1}$ & 1.2810 (0.0870) \\
\cline{2-3} \cline{5-6} & $\hat{\alpha_2}$ & 1.3594 (0.0772) &  & $\hat{\alpha_2}$ & 1.3748 (0.0944) \\
\cline{2-3} \cline{5-6} & $\hat{\alpha_3}$ & 1.2818 (0.0726) &  & $\hat{\alpha_3}$ & 1.2543 (0.1187) \\
\cline{5-6} &                  &                 &  & $\hat{\delta}$ & 0.1884 (0.6519)
\end{tabular}
\end{table}

Moreover, the graphical depiction of the fitted densities of the four models under consideration is displayed in Figure~\ref{fig:EstimDens}, from which it is remarkable how much similar are the fitting of the two-dimensional CNcDir and NcDir densities on the one hand (top left and top right panels) and of the two-dimensional Dir and KB densities from the other (bottom left and bottom right panels). In particular, it is noticeable how much comparable are the abilities of the two non-central densities to identify the presence of the data portions next to the vertices of the unit simplex on one side and the behaviors of the two remaining densities on the other, which show all zero limits despite the four-parameter structure of the two-dimensional KB.     

\begin{figure}[ht]
 \centering
 \subfigure
   {\includegraphics[width=6.3cm]{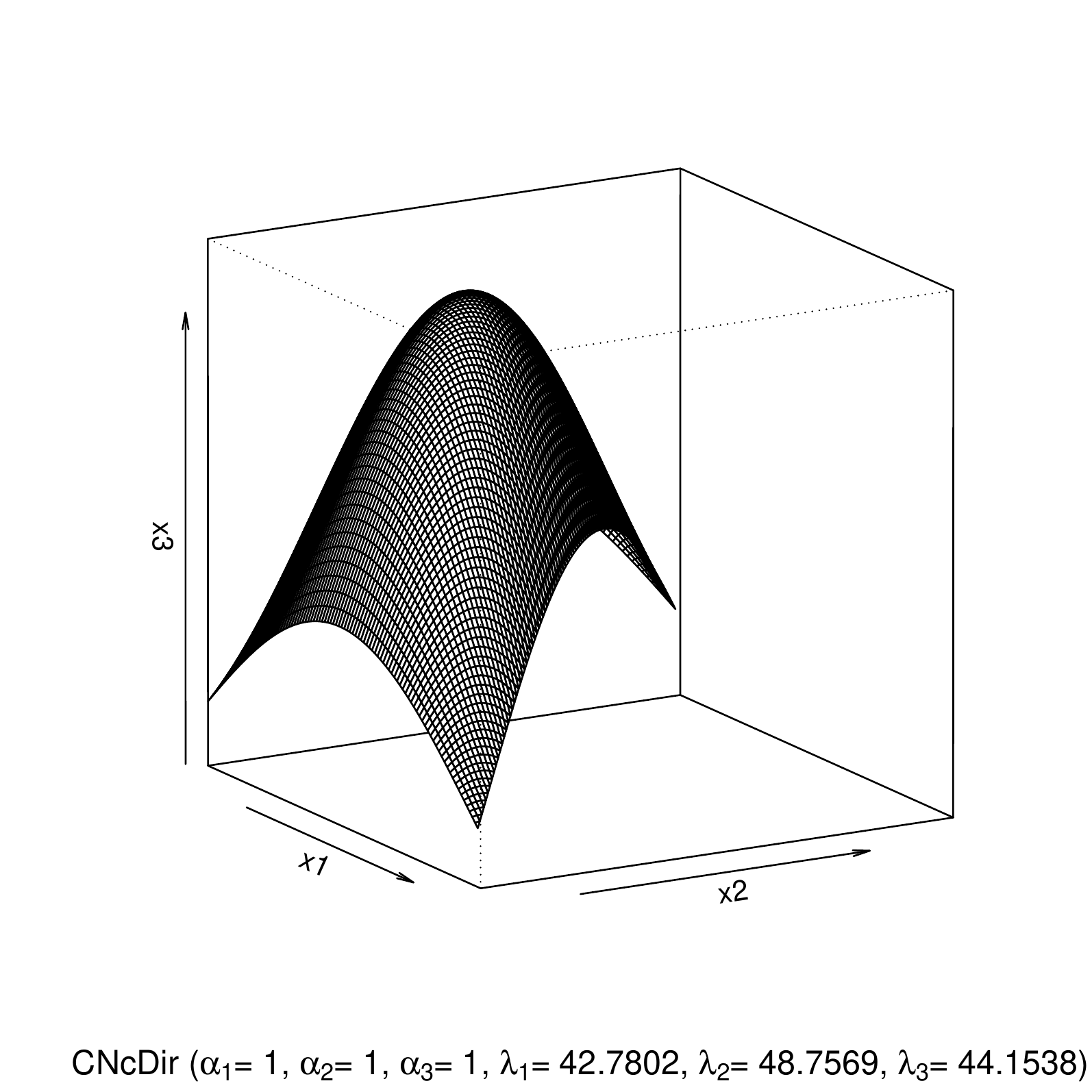}}
 \hspace{5mm}
 \subfigure
   {\includegraphics[width=6.3cm]{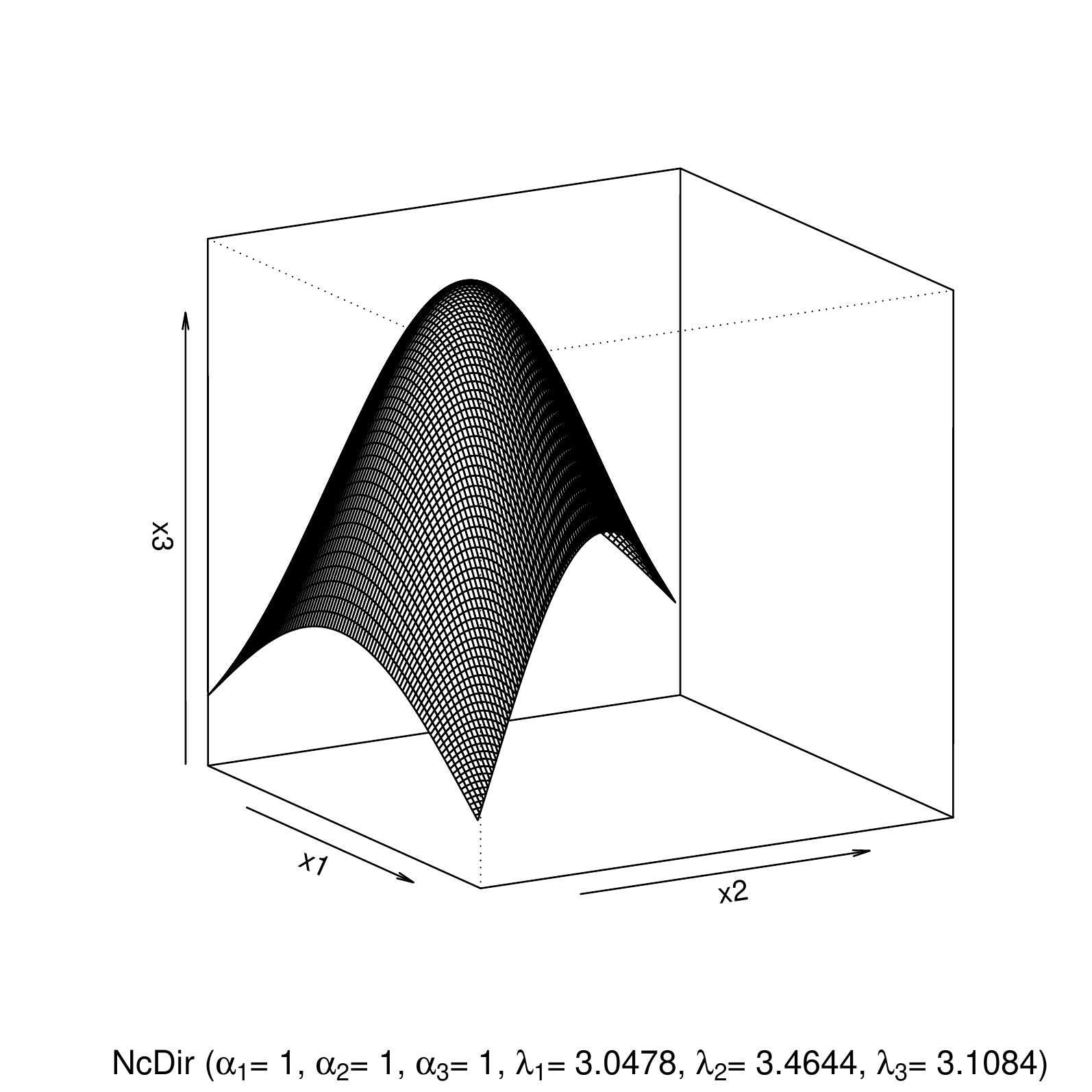}}
 \vspace{2mm}
 \subfigure
   {\includegraphics[width=6.3cm]{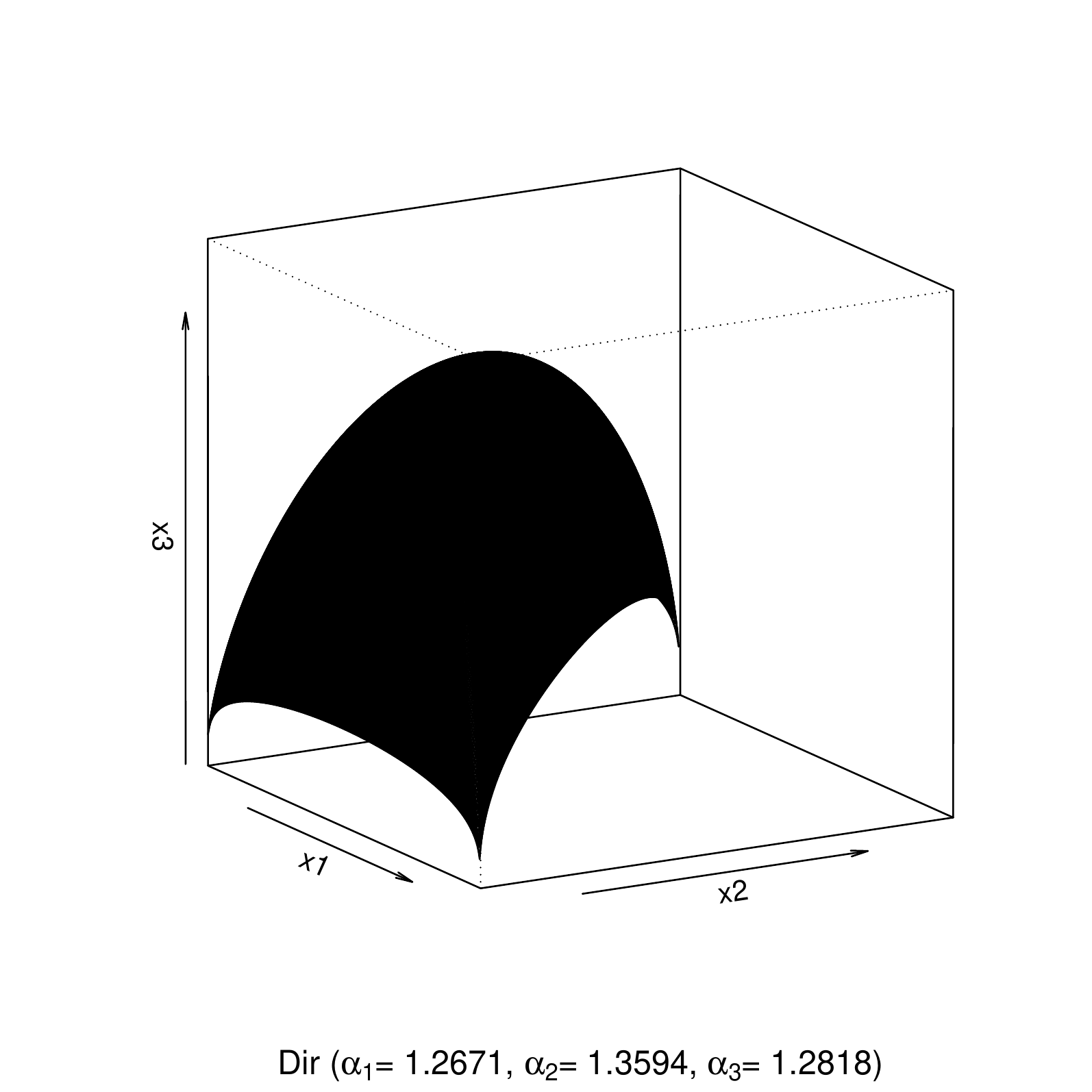}}
 \hspace{5mm}
 \subfigure
   {\includegraphics[width=6.3cm]{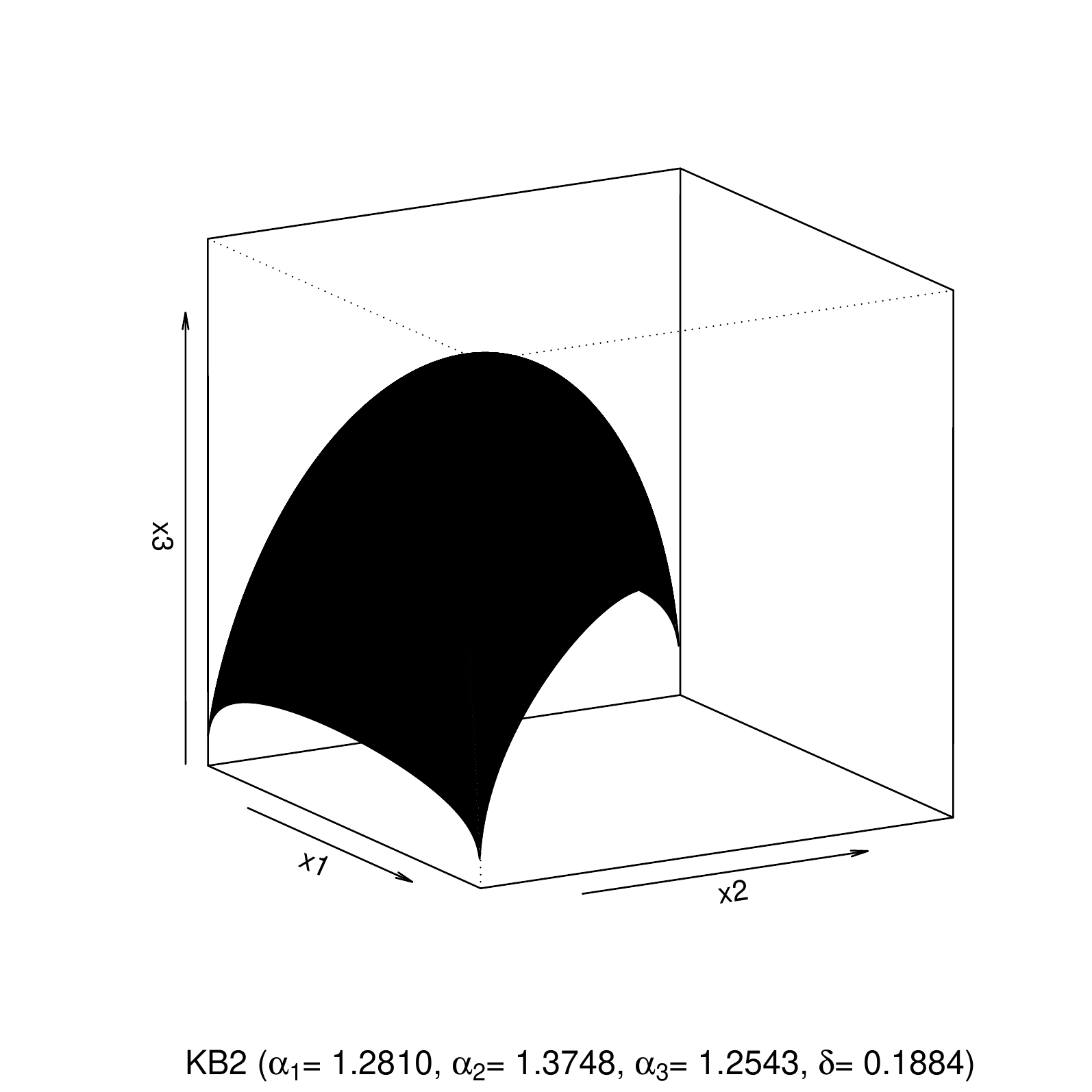}}
 \caption{Plots of the estimated densities of the chosen bivariate $\mbox{\normalfont{CNcDir}}$ (top left-hand panel), $\mbox{\normalfont{NcDir}}$ (top right-hand panel), $\mbox{\normalfont{Dir}}$ (bottom left-hand panel) and $\mbox{\normalfont{KB}}$ (bottom right-hand panel) models.}
 \label{fig:EstimDens}
\end{figure}

The main lesson to be learnt from the results reported herein is an essential equivalence between the $\mbox{\normalfont{CNcDir}}$ and the $\mbox{\normalfont{NcDir}}$ models. However, the fitting of the former is computationally less demanding than the one of the latter. Indeed, the amount of execution time that the $\mbox{\normalfont{CNcDir}}$ model enables to save with regard to the maximization of the log-likelihood is remarkable. This raises the question of further investigating the computational performances of the two models on the basis of efficiency arguments regarding the time to accomplish the Maximum-Likelihood fitting. In this respect, $n=30$ series of random values were generated from each of the two above models for selected combinations of the series size and of the parameter vector in the case $D=2$. Then, the machine-time spent to fit the two models was measured for each replication and means and standard deviations were computed for this latter within each stratum. Hence, the null hypothesis of non-inferiority of the true mean of the CNcDir fitting time with respect to the one of the NcDir is checked by using the one-tailed $Z$ test for large samples. The results of the above described simulation study are listed in Table~\ref{tab:simul.mle}, from which we conclude that the Conditional model is beyond doubt to be preferred to the existing one for the major computational efficiency this permits in carrying out inferences based on the Maximum-Likelihood theory. More precisely, the obtained founds suggest that on average the NcDir fitting is approximately of the order of 90 times slower than the one of the CNcDir model. 

\begin{table}[htp]
\centering
\small
\caption{Means and standard deviations (between round brackets) of the machine-times (in seconds) to execute the Maximum-Likelihood fitting of the $\mbox{CNcDir}^{\, 2}$ and the $\mbox{NcDir}^{\, 2}$ densities to $n=30$ series of $N$ random draws and $p$-values of the one-tailed $Z$ test to assess the null hypothesis $H_0: \, \mu_{CNcDir}-\mu_{NcDir} \, \geq\, 0$ for selected values of $N$ and of the shape parameters and the non-centrality parameters vectors.}\label{tab:simul.mle} 
\begin{tabular}{c|c||c|c|c|c}
\multirow{2}{0.2cm}{$\underline{\alpha}$} & \multirow{2}{0.2cm}{$\underline{\lambda}$} & & \multicolumn{3}{c}{Time ($''$)} \\ 
\cline{4-6} & & & \multicolumn{1}{c| }{$N=25$} & \multicolumn{1}{c| }{$N=50$} & \multicolumn{1}{c }{$N=100$} \\ \hline
1.8 & 0.7 & $\mbox{CNcDir}^{\, 2}$ & 0.9053 (0.5852)  &  1.3360 (1.0834)  &   1.5253 (1.2979) \\ 
1.2 & 1.0 & $\mbox{NcDir}^{\, 2}$ & 54.5803 (41.8910) & 72.9673 (76.5182) & 107.7123 (126.4231) \\
1.5 & 0.9 & $p$-value & $<.0001$ & $<.0001$ &	$<.0001$  \\ \hline \hline
0.7 & 4.6 & $\mbox{CNcDir}^{\, 2}$ & 1.0907 (0.8587)  &   1.7207 (1.0176)   &   1.3610 (0.9568) \\ 
1.3 & 0.5 & $\mbox{NcDir}^{\, 2}$ & 72.3550 (93.5070) & 110.6507 (138.8951) & 105.8680 (89.4877) \\
1.9 & 3.8 & $p$-value & $<.0001$ & $<.0001$ &	$<.0001$  \\ \hline \hline
2.1 & 0.8 & $\mbox{CNcDir}^{\, 2}$ & 1.1537 (0.6813)  &   1.2613 (0.9024)   &   1.4517 (0.7845) \\ 
0.2 & 2.9 & $\mbox{NcDir}^{\, 2}$ & 91.0197 (88.6285) & 142.1647 (101.2835) & 172.0060 (98.3391) \\
0.6 & 4.2 & $p$-value & $<.0001$ & $<.0001$ &	$<.0001$  \\ \hline \hline
0.8 & 2.4 & $\mbox{CNcDir}^{\, 2}$ & 0.7337 (0.3396)  &   1.0263 (0.5822)   &   1.2010 (0.6801) \\ 
0.9 & 1.7 & $\mbox{NcDir}^{\, 2}$ & 82.7857 (86.6345) & 121.7610 (71.8105) & 158.0427 (78.6970) \\
0.4 & 2.8 & $p$-value & $<.0001$ & $<.0001$ &	$<.0001$  
\end{tabular}
\end{table}

% ***************************************************************************************************************************************************
% ***************************************************************************************************************************************************
% ***************************************************************************************************************************************************
% **************************************************************** 6 CONCLUSIONS ********************************************************************
% ***************************************************************************************************************************************************
% ***************************************************************************************************************************************************
% ***************************************************************************************************************************************************
\section{Conclusions}
\label{sec:concl}

In this paper a new non-central extension of the Dirichlet model was obtained by conditioning the existing Non-central Dirichlet on the sum of the Non-central Chi-Squared random variables involved in its definition. The resulting Conditional Non-central Dirichlet distribution represents the multivariate generalization of the Conditional Doubly Non-central Beta one, namely the new non-central version of the Beta distribution which has been recently studied by \cite{Ors21}. An in-depth analysis of this new multidimensional model was carried out herein in order to highlight similarities and differences with the existing one. Specifically, despite the densities of the new and the standard Non-central Dirichlet distributions share the same complex mixture type form, from the comparison between their perturbation representations the surprisingly greater tractability and interpretability of the former over the latter emerge. Hence, the Conditional Non-central Dirichlet distribution enables to overcome the strong analytical and mathematical limitations affecting the existing Non-central Dirichlet by exhibiting a substantially simpler density function than that of the latter. At the same time, the new non-central generalization of the Dirichlet model preserves the applicative potential of the existing one by allowing its density to take on arbitrary positive and finite limits which enables to properly capture the data portions next to the vertices of the support. These are the reasons why we hope this model may attract a wide range of applications in statistics.   

% ***************************************************************************************************************************************************
% ***************************************************************************************************************************************************
% ***************************************************************************************************************************************************
% ****************************************************************** 7 APPENDIX *********************************************************************
% ***************************************************************************************************************************************************
% ***************************************************************************************************************************************************
% ***************************************************************************************************************************************************
\section{Appendix}
\label{sec:r.funcs}

In the present section we shall propose a possible translation into \texttt{R} code of the algorithm specified in Eq.~(\ref{eq:ncdir.perturb.altern}) to compute the generalization to three dimensions of the Humbert's confluent hypergeometric function. In light of Eq.~(\ref{eq:ncdir.perturb.dens}), this issue is instrumental in assessing the bivariate Non-central Dirichlet density. Specifically, the present implementation consists of two routines. The first routine, named \texttt{psi2.3}, implements the external infinite sum in Eq.~(\ref{eq:ncdir.perturb.altern}). This routine internally calls the other one, named \texttt{intpsi2.3}, which computes the Humbert's confluent hypergeometric function as infinite sum of Kummer's confluent hypergeometric functions. The two routines share the same following arguments:

\begin{itemize}
\item \textit{x1}, \textit{x2}: vectors of the first and of the second components of the data on $\mathcal{S}^{\, 2}$;
\item \textit{shape1}, \textit{shape2}, \textit{shape3}: the three shape parameters $\alpha_1$, $\alpha_2$, $\alpha_3$ of the bivariate Non-central Dirichlet density;
\item \textit{ncp1}, \textit{ncp2}, \textit{ncp3}: the three non-centrality parameters $\lambda_1$, $\lambda_2$, $\lambda_3$ of the bivariate Non-central Dirichlet density;
\item \textit{tol}: tolerance, where zero means keeping on the iterations until the additional terms do not change the partial sum of the infinite series under consideration;
\item \textit{maxiter}: maximum number of iterations to perform;
\item \textit{debug}: Boolean, where TRUE means returning debugging information whereas FALSE means returning only the final evaluation.
\end{itemize}

The code of the first routine is as follows:

\noindent \texttt{psi2.3<- }

\noindent \texttt{function(x1,x2,shape1,shape2,shape3,ncp1,ncp2,ncp3,tol,maxiter,debug) \hspace{0.1cm} \{ } 

\noindent \hspace{0.2cm} \texttt{L=c(shape1,shape2,shape3)}

\noindent \hspace{0.2cm} \texttt{S=sum(L)}

\noindent \hspace{0.2cm} \texttt{y1=(ncp1/2)*x1}

\noindent \hspace{0.2cm} \texttt{coef<-1}

\noindent \hspace{0.2cm} \texttt{temp<-intpsi2.3(x1=x1,x2=x2,shape1=shape1,shape2=shape2,shape3=shape3,}

\noindent \hspace{1.2cm} \texttt{ncp1=ncp1,ncp2=ncp2,ncp3=ncp3,tol=tol,maxiter=maxiter,debug=debug)}

\noindent \hspace{0.2cm} \texttt{out<-NULL}
     
\noindent \hspace{0.2cm} \texttt{for(n in seq\_len(maxiter)) \hspace{0.1cm} \{ }

\noindent \hspace{0.5cm} \texttt{coef<-coef*((S/L[1])*y1/n)}

\noindent \hspace{0.5cm} \texttt{fac<-coef*}

\noindent \hspace{1.2cm} \texttt{intpsi2.3(x1=x1,x2=x2,shape1=shape1+n,shape2=shape2,shape3=shape3,}

\noindent \hspace{1.2cm} \texttt{ncp1=ncp1,ncp2=ncp2,ncp3=ncp3,tol=tol,maxiter=maxiter,debug=debug)}

\noindent \hspace{0.5cm} \texttt{series<-temp+fac}

\noindent \hspace{0.5cm} \texttt{if(debug) \hspace{0.1cm} \{ }

\noindent \hspace{0.9cm} \texttt{out<-c(out,fac)}

\noindent \hspace{0.5cm} \texttt{\} }

\noindent \hspace{0.5cm} \texttt{if(hypergeo::isgood(series-temp,tol)) \hspace{0.1cm} \{ }

\noindent \hspace{0.9cm} \texttt{if(debug) \hspace{0.1cm} \{ }

\noindent \hspace{1.3cm} \texttt{return(list(series,out))}

\noindent \hspace{0.9cm} \texttt{\} }

\noindent \hspace{0.9cm} \texttt{else \{ }

\noindent \hspace{1.3cm} \texttt{return(series)}

\noindent \hspace{0.9cm} \texttt{\} }

\noindent \hspace{0.5cm} \texttt{\} }

\noindent \hspace{0.5cm} \texttt{temp<-series}

\noindent \hspace{0.5cm} \texttt{S<-S+1}

\noindent \hspace{0.5cm} \texttt{L[1]<-L[1]+1}

\noindent \hspace{0.2cm} \texttt{\} }

\noindent \hspace{0.2cm} \texttt{if(debug) \hspace{0.1cm} \{ }

\noindent \hspace{0.5cm} \texttt{return(list(series,out))}

\noindent \hspace{0.2cm} \texttt{\}}

\noindent \texttt{\}}

The code of the second routine is as follows:

\noindent \texttt{intpsi2.3<- }

\noindent \texttt{function(x1,x2,shape1,shape2,shape3,ncp1,ncp2,ncp3,tol,maxiter,debug) \hspace{0.1cm} \{ } 

\noindent \hspace{0.2cm} \texttt{L=c(shape1,shape2,shape3)}

\noindent \hspace{0.2cm} \texttt{S=sum(L)}

\noindent \hspace{0.2cm} \texttt{y2=(ncp2/2)*x2}

\noindent \hspace{0.2cm} \texttt{y3=(ncp3/2)*(1-x1-x2)}

\noindent \hspace{0.2cm} \texttt{coef<-1}

\noindent \hspace{0.2cm} \texttt{temp<-hypergeo::genhypergeo(U=S,L=L[3],z=y3,tol=tol,maxiter=maxiter,}

\noindent \hspace{1.2cm} \texttt{check\_mod=TRUE,polynomial=FALSE,debug=FALSE)}

\noindent \hspace{0.2cm} \texttt{out<-NULL}
     
\noindent \hspace{0.2cm} \texttt{for(m in seq\_len(maxiter)) \hspace{0.1cm} \{ }

\noindent \hspace{0.5cm} \texttt{coef<-coef*((S/L[2])*y2/m)}

\noindent \hspace{0.5cm} \texttt{fac<-coef*hypergeo::genhypergeo(U=S+1,L=L[3],z=y3,tol=tol,}

\noindent \hspace{1.2cm} \texttt{maxiter=maxiter,check\_mod=TRUE,polynomial=FALSE,debug=FALSE)}

\noindent \hspace{0.5cm} \texttt{series<-temp+fac}

\noindent \hspace{0.5cm} \texttt{if(debug) \hspace{0.1cm} \{ }

\noindent \hspace{0.9cm} \texttt{out<-c(out,fac)}

\noindent \hspace{0.5cm} \texttt{\} }

\noindent \hspace{0.5cm} \texttt{if(hypergeo::isgood(series-temp,tol)) \hspace{0.1cm} \{ }

\noindent \hspace{0.9cm} \texttt{if(debug) \hspace{0.1cm} \{ }

\noindent \hspace{1.3cm} \texttt{return(list(series,out))}

\noindent \hspace{0.9cm} \texttt{\} }

\noindent \hspace{0.9cm} \texttt{else \{ }

\noindent \hspace{1.3cm} \texttt{return(series)}

\noindent \hspace{0.9cm} \texttt{\} }

\noindent \hspace{0.5cm} \texttt{\} }

\noindent \hspace{0.5cm} \texttt{temp<-series}

\noindent \hspace{0.5cm} \texttt{S<-S+1}

\noindent \hspace{0.5cm} \texttt{L[2]<-L[2]+1}

\noindent \hspace{0.2cm} \texttt{\} }

\noindent \hspace{0.2cm} \texttt{if(debug) \hspace{0.1cm} \{ }

\noindent \hspace{0.5cm} \texttt{return(list(series,out))}

\noindent \hspace{0.2cm} \texttt{\}}

\noindent \texttt{\}}

% ***************************************************************************************************************************************************
% ***************************************************************************************************************************************************
% ***************************************************************************************************************************************************
% *************************************************************** REFERENCES **********************************************************************
% ***************************************************************************************************************************************************
% ***************************************************************************************************************************************************
% ***************************************************************************************************************************************************


\begin{thebibliography}{00}

\bibitem[Baddeley \textit{et al.} (2020)]{BadTurRub20} Baddeley, A., Turner, R. and Rubak, E. 2020. \texttt{spatstat.data}: datasets for 'spatstat'. \texttt{R} package version 1.4-3, https://cran.r-project.org/web/packages/spatstat.data/index.html.
%
\bibitem[Bran-Cardona \textit{et al.} (2011)]{BraOroNag11} Bran-Cardona, P. A., Orozco-Casta$\widetilde{\mbox{\normalfont{n}}}$eda, J. M. and Nagar, D. K. 2011. Bivariate Generalization of the Kummer-Beta Distribution. \textit{Revista Colombiana de Estad\'istica} 34:(3) 497-512.
%
\bibitem[Gould (1972)]{Gou72} Gould, H. W. 1972. \textit{Combinatorial Identities}. Morgantown (WV): Morgantown Printing and Binding Company.
%
\bibitem[Johnson \textit{et al.} (1995)]{JohKotBal95} Johnson, N. L., Kotz, S. and Balakrishnan, N. 1995. Vol. 2 of \textit{Continuous Univariate Distributions}, 2nd edn. New York: John Wiley \& Sons.
%
\bibitem[Johnson \textit{et al.} (2005)]{JohKemKot05} Johnson, N. L., Kemp, A. W. and Kotz, S. 2005. \textit{Univariate Discrete Distributions}, 3rd edn. Hoboken (NJ): John Wiley \& Sons.
%
\bibitem[Kotz \textit{et al.} (2000)]{KotBalJoh00} Kotz, S., Balakrishnan, N. and Johnson, N. L. 2000. Vol. 1 of \textit{Continuous Multivariate Distributions: Models and Applications}, 2nd edn. New York: John Wiley \& Sons. 
%
\bibitem[Law and Kelton (2000)]{LawKel00} Law, A. M. and Kelton, W. D. 2000. \textit{Simulation Modeling and Analysis}, 3rd edn. Boston: McGraw Hill.
%
\bibitem[Lukacs (1955)]{Luk55} Lukacs, E. 1955. A Characterization of the Gamma Distribution. \textit{Annals of Mathematical Statistics} 26:(2) 319-324.
%
\bibitem[Ng \textit{et al.} (2011)]{NgTiaTan11} Ng, K. W., Tian, G.-L., Tang, M.-L. 2011. \textit{Dirichlet and Related Distributions: Theory, Methods and Applications}. Wiley Series in Probability and Statistics.
%
\bibitem[Ongaro and Orsi (2015)]{OngOrs15} Ongaro, A. and Orsi, C. 2015. Some Results on Non-central Beta Distributions. \textit{Statistica} 75:(1) 85-100.
%
\bibitem[Orsi (2021)]{Ors21} Orsi, C. 2021. On the Conditional Non-central Beta Distribution. Accepted for publication at \textit{Statistica Neerlandica} on 01 May 2021, doi:10.1111/stan.12249.
%
\begin{comment}
\bibitem[Orsi (2021b)]{Ors21b} Orsi, C. 2021. New Developments on the Non-central Chi-Squared and Beta Distributions. Accepted for publication at \textit{Austrian Journal of Statistics} on 06 May 2021, doi:10.17713/ajs.vxix.1106, arXiv: 2107.06689.
\end{comment}
%
\bibitem[Platt \textit{et al.} (1988)]{PlaEvaRat88} Platt, W. J., Evans, G. W. and Rathbun, S. L. 1988. The Population Dynamics of a Long-Lived Conifer (\textit{Pinus palustris}). \textit{The American Naturalist}, 131:(4) 491-525
%
\bibitem[Rathbun \textit{et al.} (1994)]{RatCre94} Rathbun, S. L. and Cressie, N. 1994. A Space-Time Survival Point Process for a Longleaf Pine Forest in Southern Georgia. \textit{Journal of the American Statistical Association}, 89:(428) 1164-1173
%
\bibitem[S\'anchez \textit{et al.} (2006)]{SanNagGup06} S\'anchez, L. E., Nagar, D. K., Gupta, A. K. 2006. Properties of Noncentral Dirichlet Distributions. \textit{Computers and Mathematics with Applications}, 52:(12) 1671-1682
%
\bibitem[Siegel (1979)]{Sie79} Siegel, A. F. 1979. The Noncentral Chi-Squared Distribution with Zero Degrees of Freedom and Testing for Uniformity. Biometrika, 66:(2) 381-386
%
\bibitem[Srivastava and Karlsson (1985)]{SriKar85} Srivastava, H. M. and Karlsson, W. 1985. \textit{Multiple Gaussian Hypergeometric Series}. Chichester (UK): Ellis Horwood. 
%
\bibitem[Wilks (1938)]{Wil38} Wilks, S. S. 1938. The Large-Sample Distribution of the Likelihood Ratio for Testing Composite Hypotheses. \textit{Annals of Mathematical Statistics}, 9:(1) 60
%
\end{thebibliography}
\end{document}